\newtheorem{theorem}{Theorem}
\newtheorem{lemma}{Lemma}
\newtheorem{proposition}{Proposition}
\newtheorem{definition}{Definition}
\newtheorem{acknowledgements}{Acknowledgements}
\newtheorem{remark}{Remark}
\newcommand\blfootnote[1]{%
  \begingroup
  \renewcommand\thefootnote{}\footnote{#1}%
  \addtocounter{footnote}{-1}%
  \endgroup
}
\begin{document}  
\hfill 
  \begin{center}
    { \Large Existence and stability of periodic solutions of an impulsive differential equation and application to CD8 T-cell differentiation }
        \end{center}  
  \begin{center}
     {Simon Girel$^{1,2}$ and Fabien Crauste$^{1,2}$}
        \end{center}     
       
\begin{description}      
\item[$ ^1$]\textit{ Univ Lyon, Universit\'e Claude Bernard Lyon 1, CNRS UMR 5208, Institut Camille Jordan, 43 blvd. du 11 novembre 1918, F-69622 Villeurbanne cedex, France}
\item[$ ^2$] \textit{Inria, Villeurbanne, France}
\item[\ding{41}] \textsf{girel@math.univ-lyon1.fr};  \textsf{crauste@math.univ-lyon1.fr}
 \end{description}
\blfootnote{This a preprint version of the article published in \textit{Journal of Mathematical Biology (2018; doi \href{http://dx.doi.org/10.1007/s00285-018-1220-3}{10.1007/s00285-018-1220-3}). The accepted article is available online, in a view-only version, at \href{http://rdcu.be/IcN2}{http://rdcu.be/IcN2.}} } 

 \hrulefill 
\vspace{1 cm} 
\begin{abstract}
Unequal partitioning of the molecular content at cell division has been shown to be a source of heterogeneity in a cell population. We propose to model this phenomenon with the help of a scalar, nonlinear impulsive differential equation (IDE). In a first part, we consider a general autonomous IDE with fixed times of impulse and a specific form of impulse function. We establish properties of the solutions of that equation, most of them obtained under the hypothesis that impulses occur periodically. In particular, we show how to investigate the existence of periodic solutions and their stability by studying the flow of an autonomous differential equation. A second part is dedicated to the analysis of the convexity of this flow. Finally, we apply those results to an IDE describing the concentration of the protein Tbet in a CD8 T-cell, where impulses are associated to cell division, to study the effect of molecular partitioning at cell division on the effector/memory cell-fate decision in a CD8 T-cell lineage. We show that the degree of asymmetry in the molecular partitioning can affect the process of cell differentiation and the phenotypical heterogeneity of a cell population.

\textbf{Keywords: } Impulsive differential equation - Flow convexity - Cellular differentiation - Unequal partitioning - Immune response
\end{abstract}
\pagebreak
\section{Introduction}
\label{intro}
Time-dependent processes are often modelled with continuous differential equations
. Actually, a lot of biological processes are impacted by brief events occurring on a lower time scale \citep{kuehn2015multiple}, for example the evolution of cell phenotype subject to gene expression fluctuation, or population dynamics affected by natural disasters. When such events are brief enough comparing to the process of interest, it can be simpler to consider them as instantaneous. The theory of impulsive differential equations (IDE), initiated by \citet{Myshkis}, provides suitable mathematical tools for modelling such processes subject to perturbations. IDE have been used to study many different phenomena such as the effects of vaccination \citep{Wang2015} or of any stress factor on a cell population \citep{Kou2009}, the effects of human activities (hunting, feeding, etc.) on prey-predator dynamics \citep{Liu2003,Liu2012} or single-species systems \citep{Yan2004}, the consequences of seasonal birth pulses on population dynamics \citep{Tang2002} or dengue fever control via introduction of \textit{Wolbachia}-infected mosquitoes  in a non-infected mosquitoes population \citep{Zhang2016}.  An impulsive equation is defined by a differential equation, which characterises the evolution of a system between two impulses, an impulse criterion, which decides when impulses occur, and a set of impulses functions which define the effect of impulses on the system. For a general overview of IDE theory, we refer the reader to \citet{Bainov89,Bainov1993}. 

 In this paper we focus on the particular case of an autonomous differential equation subject to impulses at fixed times, governed by linear impulse functions. That is, a system in the form of
\begin{equation}
{\renewcommand{\arraystretch}{2}
\left \{ 
\begin{array}{r l l}
\dfrac{\mathbf{d}X(t)}{\mathbf{d}t} =& g(X(t)),&\ t\in \mathbb{R}^+\backslash\{ \tau_k, k\in \mathbb{N}^* \}, \\
X(\tau_k^+)=&(1+\alpha_k)X(\tau_k^-),&\ k\in \mathbb{N}^*, \\
X(0)=&X_0,
\end{array}
\right. \\ }\label{ImpulsiveEquation}
\end{equation}

  We aim at describing the phenomenon of unequal repartition of proteins between daughter cells at cell division \citep{Bocharov2013}, and its consequences on the emergence of different possible fates for a cytotoxic T lymphocyte, known as CD8 T-cell.

  Following infection of the organism by a pathogen, CD8 T-cells are activated by antigen presenting cells (APC), proliferate and develop cytotoxic functions, known as effector functions, to fight the infection. In the meantime, 5 to 10$\%$ of those lymphocytes develop a memory profile characterised by higher survival properties and abilities to react faster to a subsequent infection \citep{Wherry2004}. Once the infection is cleared, effector cells die progressively during the so-called contraction phase while memory cells survive in the organism on a long time scale. Even though the mechanisms controlling the fate of each cell are still not well known, it has been shown that high and increasing levels of protein Tbet \--- a transcription factor expressed by CD8 T-cells and involved in developmental processes \--- in a CD8 T-cell promote the development of effector profile and repress differentiation toward memory phenotype \citep{Joshi2007,Kaech2012,Lazarevic2013}.  
  
 Moreover, \citet{Chang2011,Chang2007} have shown that the activation of a cytotoxic T lymphocyte by an APC induces the polarisation of the lymphocyte so that Tbet, and other key factors involved in cell fate, mainly gather to one side of the plane of division of the cell, giving birth to two daughter cells that inherit different amounts of those determinants. They suggested that the asymmetric first division following activation of the cell results in differently fated daughter cells toward effector or memory lineages. 
Since the polarization of the dividing cell requires APC binding, only the first division after activation can be asymmetric, in the sense that the two daughter cells exhibit clearly distinct fates. However, it has been shown \citep{Block1990,Sennerstam88} that, for subsequent divisions, uneven stochastic partitioning of the cellular content at division is still observed. Once repeated over several divisions, this phenomenon of unequal repartition of the proteins can lead to a strong heterogeneity in a cell population coming from the same initial mother cell, resulting in different phenotypes at the end of the differentiation process.


In 2013, \citet{Bocharov2013} highlighted that mathematical tools should be developed in order to take into account the unequal repartition of proteins at cytotoxic T lymphocyte division, in particular for the carboxyfluorescein succinimidyl ester (CFSE) dye, which is  used to analyse cell proliferation under the hypothesis that it is symmetrically halved between the daughter cells upon cell division.

 Considering a CFSE-labelled lymphocyte population,  \citet{Luzyanina2013} built a system of delay hyperbolic partial differential equations structured by a continuous variable representing intracellular CFSE amount   and allowing uneven distribution of CFSE between daughter cells. Each equation models the size of the population of lymphocytes that have undergone a given number of divisions. They showed that data are better explained by their model when unequal repartition of CFSE between daughter cells at division is taken into account. 

\citet{Mantzaris2006,Mantzaris2007} introduced a variable number Monte Carlo algorithm in which stochastic division effects such as cell cycle time and repartition of proteins to daughter cells are considered. This algorithm can simulate discrete cell population dynamics, starting from a single cell, along with a deterministic description of the evolution of the quantity of an arbitrary protein in each cell, through an ordinary differential equation. Mantzaris also presented a deterministic partial differential equation of the population density, structured by the quantity of intracellular proteins. He compared the results from both the stochastic and deterministic models and showed that they are very close for big enough population sizes, while stochastic effects are more significant in small cell populations.

\citet{Prokopiou} and  \citet{Gao2016} developed a multiscale agent-based model describing a discrete population of CD8 T-cells in a lymph node, in the context of the immune response. A system of differential equations, embedded in each cell, describes the concentrations of six intracellular proteins, including Tbet, which control cell differentiation, death and cytotoxicity. When a cell divides, its molecular content is stochastically partitioned between the two daughter cells, resulting in a heterogeneous population. This model is able to qualitatively and quantitatively reproduce the immune response in a lymph node from the activation of an initial population of CD8 T-cells to the development of their effector functions and the beginning of the clonal expansion phase.

It must be noted that the continuous structured population density approach used by \citet{Luzyanina2013} and \citet{Mantzaris2006,Mantzaris2007} cannot be used to study single cell fate decision, while no formal analysis can be performed on the stochastic computational algorithms presented by \citet{Mantzaris2006,Mantzaris2007}, \citet{Prokopiou} and \citet{Gao2016}. 

We propose a different approach, that does not focus on a population of cells but rather on the concentration of protein Tbet in a single CD8 T-cell subject to multiple divisions. From the equation on protein Tbet used in \citet{Prokopiou} and \citet{Gao2016}, we propose an impulsive equation where the differential equation dynamics describes the regulation of Tbet concentration in the cell while the impulses account for the effect of protein partitioning at division in that cell. To our knowledge, this is the first work dealing with IDE from this point of view, and the first one applied to fate decision making in CD8 T-cells. This approach allows us to use theoretical results about IDEs to investigate effects of protein partitioning on cell fate decision making.

The paper is organized as follows. In Section \ref{PropGénérales} we introduce some properties on the existence, monotonicity and asymptotic behaviour of the solutions of the autonomous IDE with fixed impulse times (\ref{ImpulsiveEquation}), with the main results shown for the particular case where impulses occur periodically and $\alpha_k\equiv \alpha$ for all $k\in \mathbb{N}^*$.  
In Section \ref{SectionSolPeriodiques}, we discuss the existence and stability of periodic solutions of (\ref{ImpulsiveEquation}). The results mostly rely on the properties of the flow associated to (\ref{ImpulsiveEquation}).
In Section \ref{sectionConvex} we study the convexity of that flow. In a first part, we show some preliminary results under the assumption that $g$ is a piecewise linear function and then extend our study to any continuously differentiable function.
In Section \ref{ChapitreImpulseTbet}, the dynamics of the concentration of protein Tbet in a single CD8 T-cell undergoing several divisions is modelled  with an IDE, where an impulse occurs at each division. We use the results from previous sections to study the number of periodic solutions and their stability. 
In Section \ref{SectionBio} we propose an explanation to how a single mother cell can, through multiple divisions, give birth to an heterogeneous cell population, composed of two pools of lymphocytes with opposite phenotypes. 

\section{Impulsive differential equations: definitions and basic properties} \label{PropGénérales}

We consider the impulsive system (\ref{ImpulsiveEquation}), where impulses occur at fixed times $\tau_k$, $k\in \mathbb{N}^*$. Parameters $(\alpha_k)_{k\geq 1}$ and  $(\tau_k)_{k\geq 1}$ are two sequences of real numbers, $g:U\to \mathbb{R}$ is a lipschitz-continuous function, $X_0\in U$ and $U$ is such that either $U=\mathbb{R}$, or $g(0)=0$ and $U\in\{\mathbb{R}^-,\mathbb{R}^+\}$.

The definition of $U$ is such that for all $x\in U$ and for all $\alpha>-1$, we have $(1+\alpha)x\in U$ and, in the case where $U \in\{\mathbb{R}^-,\mathbb{R}^+\}$, the condition $g(0)=0$ ensures that, for any initial condition $X_0\in U$, the solution of the autonomous equation $\mathbf{d} X/\mathbf{d}t=g(X)$ remains in $U$.

\begin{definition}\emph{\citet{Yan1998}}
For any $X_0 \in U$, a real function $X$ defined on $\mathbb{R}^+$ is said to be a solution of (\ref{ImpulsiveEquation}) if the following conditions are satisfied:
\begin{description}
\item[(i)] X is absolutely continuous on $ [0, \tau_1 )$ and on each interval $(\tau_k, \tau_{k+1})$, $k\in \mathbb{N}^* $,
\item[(ii)] for any $\tau_k,\ k \in \mathbb{N}^*,\ X(\tau_k^+)$ and $X(\tau_k^-)$ exist in $\mathbb{R}$ (i.e. $X$ may have discontinuity of the first kind only) and $X(\tau_k^+) = X(\tau_k)$ (i.e. $X$ is right continuous),
\item[(iii)]$X$ satisfies (\ref{ImpulsiveEquation}).
\end{description}
\label{defSolImp} 
\end{definition}
 In the following, we denote the solution of (\ref{ImpulsiveEquation}) at time $t$ by $X(t;X_0,\alpha_k)$. If for a given $\alpha \in \mathbb{R}$ and for all $k \in \mathbb{N}^*$, $\alpha_k=\alpha$,  we simply write $X(t;X_0,\alpha)$. In particular the solution of System (\ref{ImpulsiveEquation}) without impulse is $X(t;X_0,0)$. 
 
We introduce the following hypotheses:\begin{description}
\item[($\mathbf{H_1}$)] $0< \tau_1<\tau_2<\tau_3<\dots$ are fixed values and $\underset{k \to \infty}{lim}\tau_k=+\infty$. 
\item[($\mathbf{H_2}$)] $(\alpha_k)_{k\geq 1}$ is a sequence of real numbers such that,  for all $ k \geq 1, \alpha_k>-1$.
\end{description}

The next proposition states the existence of a solution of
(\ref{ImpulsiveEquation}) and its uniqueness.
\begin{proposition} Let hypotheses $(\mathbf{H_1})$ and $(\mathbf{H_2})$ hold true. Then there exists a unique global solution $X(\cdot;X_0,\alpha_k)$ of (\ref{ImpulsiveEquation}) and, for all $t\geq 0$, the application $(X_0,(\alpha_k)_{\{k\in \mathbb{N}^*, \tau_k\leq t\}})\mapsto X(t;X_0,\alpha_k)$ is continuous with respect to both variables.  Moreover, for $t\geq 0$,
\begin{equation}
X(t;X_0,\alpha_k)= X(t)=
X_0+\int_0^tg(X(s))\mathrm{d}s+\underset{\{ k\in \mathbb{N}^*,\ 0<\tau_k\leq t\}}{\sum}\alpha_k X(\tau_k^-).\label{FormeSol}
\end{equation} \label{ExistenceUnicité}
\end{proposition}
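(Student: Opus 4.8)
The plan is to build the solution piecewise on the successive inter-impulse intervals and to exploit hypothesis $(\mathbf{H_1})$ to guarantee that only finitely many impulses precede any given time. Set $I_0=[0,\tau_1)$ and $I_k=[\tau_k,\tau_{k+1})$ for $k\geq 1$. Since $g$ is Lipschitz-continuous on $U$, the autonomous Cauchy problem $\dot x=g(x)$, $x(\tau_k)=Y_k$ admits a unique solution, which is global (the Lipschitz bound forces at most linear growth, so no finite-time blow-up) and, when $U\in\{\mathbb{R}^-,\mathbb{R}^+\}$, remains in $U$ because $g(0)=0$ makes $0$ an equilibrium that uniqueness forbids trajectories to cross. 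I would then define the solution inductively: $Y_0=X_0$; on $I_k$, let $X$ coincide with the flow of $\dot x=g(x)$ started at $Y_k$ at time $\tau_k$; at $\tau_{k+1}$ set $X(\tau_{k+1}^-)$ equal to the (existing) left limit and $Y_{k+1}=(1+\alpha_{k+1})X(\tau_{k+1}^-)$. Hypothesis $(\mathbf{H_2})$ ensures $1+\alpha_{k+1}>0$, so $Y_{k+1}$ again belongs to $U$ by the closure of $U$ under multiplication by positive scalars. Because $\tau_k\to+\infty$, every $t\geq 0$ lies in exactly one $I_k$ and is reached after finitely many steps, yielding a global solution satisfying (i)--(iii) of Definition \ref{defSolImp}.

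Uniqueness follows interval by interval: any solution must solve $\dot x=g(x)$ on each $I_k$ with the value at $\tau_k$ forced by right-continuity and the jump condition, so Cauchy--Lipschitz uniqueness on $I_k$ pins it down to the construction above. For the integral representation (\ref{FormeSol}), I would fix $t\geq 0$, enumerate the impulse times $\tau_1<\dots<\tau_m$ in $(0,t]$, and telescope the elementary identities $X(t)=X(\tau_m^+)+\int_{\tau_m}^t g(X(s))\,\mathrm{d}s$, $X(\tau_j^+)=X(\tau_j^-)+\alpha_j X(\tau_j^-)$, and $X(\tau_{j}^-)=X(\tau_{j-1}^+)+\int_{\tau_{j-1}}^{\tau_j}g(X(s))\,\mathrm{d}s$. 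Since $X$ is piecewise continuous hence bounded on $[0,t]$ and $g$ is continuous, $g(X(\cdot))$ is integrable and the finitely many jump points have measure zero, so $\int_0^t g(X(s))\,\mathrm{d}s$ is well defined and the telescoping collapses to the claimed expression $X_0+\int_0^t g(X(s))\,\mathrm{d}s+\sum_{0<\tau_k\leq t}\alpha_k X(\tau_k^-)$.

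For the continuity statement, the key observation is that, for fixed $t$ with impulses $\tau_1,\dots,\tau_m$ in $(0,t]$, the value $X(t)$ is a finite composition
\[
X(t)=\Phi_{t-\tau_m}\circ h_{\alpha_m}\circ\Phi_{\tau_m-\tau_{m-1}}\circ\cdots\circ h_{\alpha_1}\circ\Phi_{\tau_1}(X_0),
\]
where $\Phi_s$ is the time-$s$ flow of $\dot x=g(x)$ and $h_{\alpha}(x)=(1+\alpha)x$. Each $\Phi_s$ is continuous in its initial datum (the Lipschitz bound on $g$ gives, via Gr\"onwall's inequality, Lipschitz dependence of the flow on the initial condition over the bounded time $s$), and each $h_\alpha$ is jointly continuous in $(x,\alpha)$. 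A finite composition of jointly continuous maps is jointly continuous, which yields continuity of $(X_0,(\alpha_k)_{\{k,\,\tau_k\leq t\}})\mapsto X(t)$ in both arguments. The main obstacle is organisational rather than conceptual: the delicate points are to justify global existence and the invariance of $U$ under the coupled flow-and-impulse iteration (handled by the Lipschitz growth bound, $g(0)=0$, and $\alpha_k>-1$) and to make the composition representation rigorous so that continuous dependence for the plain ODE can be transported across the impulses; the finiteness of the number of impulses before $t$, guaranteed by $(\mathbf{H_1})$, is precisely what makes both the telescoping and the composition finite and hence tractable.
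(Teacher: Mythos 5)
Your proof is correct, but it is considerably more self-contained than the paper's. The paper does not construct the solution at all: it invokes Theorems 2.3 and 2.6 of Bainov and Simeonov (1993) for existence, uniqueness and the integral representation (\ref{FormeSol}), and Theorem 1.2 of Dishliev et al.\ (2012) for continuous dependence, noting only that the boundedness hypothesis on $g$ required by the latter reference is unnecessary here because the solution on $[0,t]$ is a finite composition of the non-impulsive flow with the maps $x\mapsto(1+\alpha)x$. Your piecewise construction on the intervals $[\tau_k,\tau_{k+1})$, the global-existence argument via the linear growth bound from Lipschitz continuity, the invariance of $U$ via $g(0)=0$ and $\alpha_k>-1$, and the telescoping derivation of (\ref{FormeSol}) all supply from first principles what the paper delegates to the literature; the payoff is that you never need any hypothesis beyond $(\mathbf{H_1})$, $(\mathbf{H_2})$ and the standing assumptions on $g$ and $U$. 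Your continuity argument --- writing $X(t)$ as a finite composition $\Phi_{t-\tau_m}\circ h_{\alpha_m}\circ\cdots\circ h_{\alpha_1}\circ\Phi_{\tau_1}$ of jointly continuous maps, with Gr\"onwall giving continuity of each $\Phi_s$ in its initial datum --- is exactly the observation the paper sketches informally, so on that point the two proofs coincide in substance; you have merely made the composition explicit. The only nuance worth flagging is that your composition formula relies on the right-continuity convention $X(\tau_k)=X(\tau_k^+)$ of Definition \ref{defSolImp}, which you use implicitly; it would be worth one sentence to say so.
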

\begin{proof}
Existence and uniqueness of the solution, as well as  formula (\ref{FormeSol}), are given by Theorems 2.3 and 2.6 from \citet{Bainov1993}. 

The continuity of $X_0\mapsto X(t;\alpha_k,X_0)$ is proved in Theorem 1.2. from \citet{Dishliev2012} for a more general class of equations but under the hypothesis that there exists $C_g>0$ such that for all $x\in U$, $|g(x)|<C_g$. In the case of (\ref{ImpulsiveEquation}), this hypothesis is not necessary.  

Indeed, since the functions $X_0 \mapsto X(\cdot;X_0,0)$ and $x\mapsto (1+\alpha)x$, $ \alpha>-1$,  are continuous, it is easy to see that the solutions of (\ref{ImpulsiveEquation})  are continuous with respect to the initial condition $X_0$ and with respect to  $(\alpha_k)_{\{k\in \mathbb{N}^*,\ \tau_k\leq t\}}$. Note that $Card\left( \{k\in \mathbb{N}^*,\ \tau_k\leq t\}\right)=:N<+\infty$ then  any topology can be chosen for the continuity regarding parameter $(\alpha_k)_{\{k\in \mathbb{N}^*,\ \tau_k\leq t\}}\in \left( -1,+\infty\right) ^N$.\qed
\end{proof} 
\begin{lemma} Let hypothesis $\mathbf{(H_1)}$ hold true, $g(0)=0$, $U=\mathbb{R}^+$ and $X_0\in U$. If $(m_k)_{k\geq 1}$, $(\alpha_k)_{k\geq 1}$ and $(M_k)_{k\geq 1}$ are three sequences such that, for all $k\in \mathbb{N}^*$, $-1<m_k\leq \alpha_k\leq M_k$, then for all $t\in \mathbb{R}^+$,$$\ X(t;X_0,m_k)\leq X(t;X_0,\alpha_k)\leq X(t;X_0,M_k).$$ \label{encadrement}\end{lemma}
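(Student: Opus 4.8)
The plan is to argue by induction over the successive impulse intervals, combining two ingredients: the order-preserving property of the flow of the autonomous equation $\mathbf{d}X/\mathbf{d}t = g(X)$, and the fact that all three solutions remain nonnegative. First I would record the monotonicity of the autonomous flow. Since $g$ is Lipschitz-continuous, the scalar autonomous ODE has unique solutions, so two trajectories starting from different initial data cannot cross; consequently the flow $x \mapsto X(t;x,0)$ is nondecreasing for every fixed $t \ge 0$. I would also note that, because $g(0)=0$ and $U=\mathbb{R}^+$, any solution issued from $X_0 \ge 0$ stays in $\mathbb{R}^+$ between impulses, and that each impulse preserves nonnegativity since $1+m_k$, $1+\alpha_k$, $1+M_k$ are all positive (by $-1 < m_k \le \alpha_k \le M_k$). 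Hence the three solutions $X(\cdot;X_0,m_k)$, $X(\cdot;X_0,\alpha_k)$, $X(\cdot;X_0,M_k)$ are nonnegative on all of $\mathbb{R}^+$.

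Next I would set up the induction. At $t=0$ the three solutions all equal $X_0$, and on $[0,\tau_1)$ they coincide since they share that initial condition and obey the same impulse-free equation; in particular the desired inequalities hold there. For the inductive step, suppose the ordering $X(\tau_k^+;X_0,m_k) \le X(\tau_k^+;X_0,\alpha_k) \le X(\tau_k^+;X_0,M_k)$ holds just after the $k$-th impulse. On the open interval $(\tau_k,\tau_{k+1})$ no impulse occurs, so each solution equals the autonomous flow of its own value at $\tau_k^+$, and monotonicity of that flow propagates the ordering up to the left limit at $\tau_{k+1}^-$. I would then pass the ordering through the jump using nonnegativity: for the lower inequality,
$$(1+m_{k+1})X(\tau_{k+1}^-;X_0,m_k) \le (1+m_{k+1})X(\tau_{k+1}^-;X_0,\alpha_k) \le (1+\alpha_{k+1})X(\tau_{k+1}^-;X_0,\alpha_k),$$
where the first step uses $1+m_{k+1}>0$ with the inductive ordering, and the second uses $m_{k+1}\le\alpha_{k+1}$ together with the nonnegativity of $X(\tau_{k+1}^-;X_0,\alpha_k)$; the upper inequality is symmetric. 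This gives the ordering just after $\tau_{k+1}$, closing the induction. Since $\tau_k \to +\infty$ by $(\mathbf{H_1})$, each $t \ge 0$ belongs to exactly one such interval, so the inequalities hold for all $t$.

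The main obstacle I anticipate is not the continuous part, monotonicity of a scalar autonomous flow being standard, but the bookkeeping at the jumps: because the impulse acts multiplicatively, deducing $(1+m_{k+1})a \le (1+\alpha_{k+1})b$ from $a \le b$ and $m_{k+1}\le\alpha_{k+1}$ genuinely requires the states to be nonnegative. Nonnegativity must therefore be carried through the induction as a second invariant alongside the ordering itself, and keeping those two coupled is the delicate point of the argument.
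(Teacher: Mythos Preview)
Your argument is correct. The paper does not actually supply a proof of this lemma; it is stated and then immediately followed by the remark on the cases $U=\mathbb{R}^-$ and $U=\mathbb{R}$, so there is nothing to compare against line by line. Your sketch is precisely the natural argument one would expect: monotonicity of the scalar autonomous flow carries the ordering across each inter-impulse interval, and at each jump the multiplicative form $X(\tau_k^+)=(1+\alpha_k)X(\tau_k^-)$ preserves the ordering because the factors are positive and the states are nonnegative. Your observation that nonnegativity must be carried as a second inductive invariant is exactly the point the paper alludes to when it notes that for $U=\mathbb{R}$ one must ``discuss the sign of the solutions''; without it the step $(1+m_{k+1})a\le(1+\alpha_{k+1})b$ would indeed fail. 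Nothing is missing.
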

If $U=\mathbb{R}^-$ Lemma \ref{encadrement} remains true if we reverse the inequalities. If $U=\mathbb{R}$ it is necessary to discuss the sign of the solutions. 

\begin{remark} One specific feature of IDE is that two distinct solutions might merge after an impulse depending on the impulse function. In the case of (\ref{ImpulsiveEquation}), impulses are of the particular form $X(\tau_k^+)=(1+\alpha_k)X(\tau_k^-)$ and, since for all $\alpha > -1$ the function $X\mapsto (1+\alpha)X$ is injective and increasing, two distinct solutions cannot cross or merge after an impulse. Indeed, on intervals $ [0, \tau_1)$ and $(\tau_k, \tau_{k+1})$, the Cauchy-Lipschitz theorem ensures that distinct solutions cannot overlap. Consequently, under hypotheses ($\mathbf{H_1}$)-($\mathbf{H_2}$), for a given sequence  $(\alpha_k)_{k\geq 1}$ and two initial conditions $X_m<X_M$, for all $t\geq 0$, $X(t;X_m,\alpha_k)<X(t;X_M,\alpha_k)$. \end{remark}

Hereafter, we study the behaviour of the solutions of (\ref{ImpulsiveEquation}) when we consider that impulses occur with fixed period $\omega\in \mathbb{R^+_*}$ and that the sequence $(\alpha_k)_{k\in \mathbb{N}^*}$ is constant. To this end, we introduce hypotheses ($\mathbf{H_3}$) and ($\mathbf{H_4}$), as follows:

\begin{description}
\item[($\mathbf{H_3}$)] $\forall k \in \mathbb{N}^*,\ \tau_k=k\omega$ with $\omega>0$ fixed.
\item[($\mathbf{H_4}$)] $\forall k \in \mathbb{N}^*,\ \alpha_k=\alpha>-1$ with $\alpha$ fixed.
\end{description}

Note that ($\mathbf{H_3}$) implies ($\mathbf{H_1}$) and ($\mathbf{H_4}$) implies ($\mathbf{H_2}$).



We introduce the next definition.
\begin{definition}
A solution $X$ of (\ref{ImpulsiveEquation}) converges to a solution $Y$ if for all $\epsilon>0$, there exists $t_\epsilon >0$ such that $|X(t)-Y(t)|<\epsilon$  for $t>t_\epsilon$.
\end{definition}



The following proposition is a particular case of Theorem 12.5 from \citet{Bainov1993}.
\begin{proposition}
Let hypotheses $(\mathbf{H_3})$ and $(\mathbf{H_4})$ hold true. Then every bounded solution of (\ref{ImpulsiveEquation})  converges to a periodic solution.
\label{borneeconverge}
\end{proposition}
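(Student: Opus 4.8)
The plan is to reduce the continuous-time impulsive dynamics to the iteration of a single scalar map and then to invoke the elementary fact that an increasing scalar map has monotone orbits. Under $(\mathbf{H_3})$ and $(\mathbf{H_4})$ the system is $\omega$-periodic, so it is natural to sample the solution at the post-impulse instants $k\omega$. Let $\phi_t$ denote the flow of the autonomous equation $\mathbf{d}X/\mathbf{d}t=g(X)$, that is $\phi_t(x)=X(t;x,0)$, and define the stroboscopic map $P:U\to U$ by $P(x)=(1+\alpha)\phi_\omega(x)$. Setting $x_k:=X(k\omega^+;X_0,\alpha)$, the structure of (\ref{ImpulsiveEquation}) — autonomous evolution on $(k\omega,(k+1)\omega)$ followed by the jump $X((k+1)\omega^+)=(1+\alpha)X((k+1)\omega^-)$ — yields the recursion $x_{k+1}=P(x_k)$. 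The invariance assumptions on $U$ guarantee $P(U)\subseteq U$, and a periodic solution of period $\omega$ corresponds exactly to a fixed point of $P$.

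First I would check that $P$ is continuous and increasing. Continuity follows from the continuity of the flow in its initial condition (Proposition \ref{ExistenceUnicité}), and monotonicity from the property established in the Remark above: for $x<y$ the solutions $\phi_t(x)$ and $\phi_t(y)$ cannot cross, so $\phi_\omega(x)<\phi_\omega(y)$, and multiplying by $1+\alpha>0$ preserves this inequality. I would then use that the orbit of a continuous increasing scalar map is monotone: since $x_1-x_0=P(x_0)-x_0$ and $P$ is increasing, the successive differences $x_{k+1}-x_k=P(x_k)-P(x_{k-1})$ all share the sign of $x_1-x_0$, so $(x_k)$ is monotone. If $X$ is bounded then $(x_k)$ is a bounded monotone sequence, hence converges to some $x^\ast\in U$, and by continuity of $P$ we get $x^\ast=P(x^\ast)$. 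This fixed point generates an $\omega$-periodic solution $Y$ of (\ref{ImpulsiveEquation}) with $Y(k\omega^+)=x^\ast$.

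It then remains to upgrade the convergence $x_k\to x^\ast$ of the sampled values to the convergence of $X$ towards $Y$ uniformly in $t$, as required by the definition above. On each interval $[k\omega,(k+1)\omega)$ one has $X(t)=\phi_{t-k\omega}(x_k)$ and $Y(t)=\phi_{t-k\omega}(x^\ast)$, so the comparison over the $k$-th period is purely a matter of continuous dependence on initial data: since $g$ is Lipschitz with some constant $L$, Gronwall's inequality gives $\sup_{0\le s\le\omega}|\phi_s(x_k)-\phi_s(x^\ast)|\le e^{L\omega}|x_k-x^\ast|$, and as $e^{L\omega}$ is a fixed constant while $|x_k-x^\ast|\to 0$, the supremum of $|X-Y|$ over $[k\omega,(k+1)\omega)$ tends to $0$. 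I expect the conceptual heart of the argument to be the reduction in the first two paragraphs, namely the observation that periodicity together with the multiplicative form of the impulse makes the stroboscopic map monotone, which is exactly what forces orbits to be monotone and hence convergent; the only point requiring genuine care is this final uniform-in-$t$ step, where one must ensure that the single-period spreading of the flow is controlled, which the Lipschitz hypothesis on $g$ together with the fixed multiplier $1+\alpha$ provides.
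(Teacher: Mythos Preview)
Your argument is correct. Note, however, that the paper does not actually prove this proposition: it simply records it as a particular case of Theorem~12.5 in \citet{Bainov1993}, with no proof supplied. What you have written is therefore a self-contained proof where the paper relies on a citation.

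Your route via the stroboscopic map $P(x)=(1+\alpha)\varphi_\omega(x)$ is the natural one, and the monotonicity step you perform is exactly the content of the paper's Lemma~\ref{monotonie} (which appears just after this proposition). In effect you have folded Lemma~\ref{monotonie} into your argument and then added the continuity/Gronwall step to pass from convergence of the sampled sequence $(x_k)$ to convergence of $X$ to the periodic solution $Y$ in the sense of the paper's Definition. That last step is the only place where a reader might want a word of care, and you handled it correctly: the fixed-period Gronwall bound $\sup_{0\le s\le\omega}|\phi_s(x_k)-\phi_s(x^\ast)|\le e^{L\omega}|x_k-x^\ast|$ is exactly what is needed, and it uses only the standing Lipschitz assumption on $g$. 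One cosmetic point: you should also observe that $x^\ast\in U$ because $U$ is closed, so that $P(x^\ast)$ is defined and the periodic solution $Y$ lives in $U$; this is immediate from the paper's choice $U\in\{\mathbb{R},\mathbb{R}^+,\mathbb{R}^-\}$.
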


Under hypotheses ($\mathbf{H_3}$) and ($\mathbf{H_4}$), one can show Lemma \ref{monotonie} and Proposition~\ref{periode}
.

\begin{lemma} 
Let hypotheses $(\mathbf{H_3})$ and $(\mathbf{H_4})$ hold true and consider the solution $X=X(\cdot;X_0,\alpha)$  of (\ref{ImpulsiveEquation}). Then, \begin{description}
\item[i)] if $X(\omega)> X(0)$ (resp. $<X(0)$) the sequence $(X(k\omega))_{k\in\mathbb{N}^*}$ is strictly increasing (resp. decreasing).
\item[ii)] $X$ is $\omega$-periodic if and only if $X(\omega)=X(0)$ (Lemma 12.1 from \citet{Bainov1993}).
\end{description}  \label{monotonie}
\end{lemma}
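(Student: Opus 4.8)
The plan is to reduce the statement about the discrete sequence $(X(k\omega))_{k}$ to a single comparison between the solution and its shift by one period, exploiting the autonomy of $g$ together with the periodicity and constancy of the impulses granted by $(\mathbf{H_3})$ and $(\mathbf{H_4})$.

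First I would establish a time-translation invariance. Fix $X_0\in U$ and set $Y(t):=X(t+\omega;X_0,\alpha)$ for $t\geq 0$. I claim that $Y$ is itself a solution of (\ref{ImpulsiveEquation}) with initial condition $Y(0)=X(\omega;X_0,\alpha)$. Checking this against Definition \ref{defSolImp}: on each interval between consecutive points of $\{\omega,2\omega,\dots\}$, $Y$ inherits absolute continuity and the relation $\dot Y=g(Y)$ from $X$, precisely because $g$ is autonomous and hence the differential equation is unchanged by the shift; the one-sided limits $Y(\tau_k^{\pm})$ exist and $Y$ is right-continuous since $X$ is; and at each $\tau_k=k\omega$ the impulse relation $Y(\tau_k^+)=(1+\alpha)Y(\tau_k^-)$ holds because it is exactly the impulse of $X$ at $(k+1)\omega$, where $(\mathbf{H_4})$ supplies the same factor $\alpha$. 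The one delicate point is the left endpoint $t=0$: since the first impulse of the original system occurs at $\tau_1=\omega$ and none at $t=0$, the value $Y(0)=X(\omega)=X(\omega^+)$ is the \emph{post}-impulse value, and right-continuity makes $Y$ start a fresh, impulse-free evolution on $(0,\omega)$, matching the structure required of a solution. By the uniqueness part of Proposition \ref{ExistenceUnicité} I then obtain $Y(t)=X\big(t;X(\omega;X_0,\alpha),\alpha\big)$, that is
$$X(t+\omega;X_0,\alpha)=X\big(t;\,X(\omega;X_0,\alpha),\,\alpha\big),\qquad t\geq 0.$$

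The remaining steps are then immediate. Suppose $X(\omega)>X(0)$. By the translation identity, the two maps $t\mapsto X(t+\omega;X_0,\alpha)$ and $t\mapsto X(t;X_0,\alpha)$ are exactly the solutions of (\ref{ImpulsiveEquation}) issued from the initial data $X(\omega;X_0,\alpha)$ and $X_0$, with $X(\omega;X_0,\alpha)>X_0$. The non-crossing property recorded in the Remark (valid under $(\mathbf{H_1})$-$(\mathbf{H_2})$, hence under $(\mathbf{H_3})$-$(\mathbf{H_4})$) yields the strict inequality $X(t+\omega)>X(t)$ for every $t\geq 0$. Specializing to $t=k\omega$ gives $X((k+1)\omega)>X(k\omega)$ for all $k$, which is precisely the strict monotonicity of $(X(k\omega))_k$. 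The case $X(\omega)<X(0)$ follows by reversing every inequality, and part ii) is the cited Lemma 12.1 of \citet{Bainov1993}.

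I expect the main obstacle to be the careful verification of the time-translation invariance, and in particular the bookkeeping at the endpoint $t=0$: one must check that the shift neither introduces nor removes an impulse and that the right-continuity convention makes $X(\omega)$ the correct initial value for the shifted problem. Once the identity $X(t+\omega;X_0,\alpha)=X(t;X(\omega;X_0,\alpha),\alpha)$ is in hand, the monotonicity is a one-line consequence of the order-preservation of the flow, so essentially all the content of part i) is concentrated in this semigroup-type reduction.
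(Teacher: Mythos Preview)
Your proof is correct and follows essentially the same approach as the paper: both exploit the autonomy of $g$ and the periodic, constant-coefficient impulses to identify the shifted solution with a solution started from $X(\omega)$, and then invoke order-preservation (uniqueness/non-crossing) of the flow. The only difference is packaging: the paper carries out an explicit induction from $k$ to $k+1$, while you state the semigroup identity $X(t+\omega;X_0,\alpha)=X(t;X(\omega;X_0,\alpha),\alpha)$ once and read off the monotonicity for all $k$ simultaneously.
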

\begin{proof}  We first show \textbf{i)}. Let us assume that $X(\omega):=(1+\alpha) X(\omega^-)>X(0)$. We then assume that there exists $k\in \mathbb{N}^*$ such that $X_{k+1}:=X((k+1)\omega)>X_k:=X(k\omega)$ and we show that $X((k+2)\omega)>X((k+1)\omega)$.

Thanks to the uniqueness of the solution of (\ref{ImpulsiveEquation}), and by integrating the solutions $Y_1=Y_1(\cdot;X_{k+1},\alpha)$ and $Y_2=Y_2(\cdot;X_k,\alpha)$ of (\ref{ImpulsiveEquation}) on $\left[0,\omega \right)$, the inequality $X((k+1)\omega)>X(k\omega)$ implies that $X((k+2)\omega^-)>X((k+1)\omega^-)$ and then $$X((k+2)\omega):=(1+\alpha)X((k+2)\omega^-)>(1+\alpha)X((k+1)\omega^-)=:X((k+1)\omega).$$ 

We showed that for all $k\in \mathbb{N}^*,\ X((k+1)\omega)>X(k\omega)$. Similarly, we can show that if $X(\omega)<X(0)$, then for all $k\in \mathbb{N}^*,\ X((k+1)\omega)<X(k\omega)$. 

It remains to prove \textbf{ii)}. As done in the proof of \textbf{i)} we show that if $X(\omega)=X(0)$, then for all $k\in \mathbb{N}^*,\ X(k\omega)=X(0)$. Because $X$ is the solution of an autonomous equation on each interval $\left[ k\omega, (k+1)\omega\right)$, it follows that for all $k\in \mathbb{N}^*$, for all $ t\in \left[0,\omega\right) $, $X(t)=X(k\omega+t)$ and then $X$ is $\omega$-periodic.  \end{proof}

\begin{proposition}
Let hypotheses $(\mathbf{H_3})$ and $(\mathbf{H_4})$ hold true and $X$ be a non-constant periodic solution of (\ref{ImpulsiveEquation}). Then $X$ is $\omega$-periodic and $\omega$ is the smallest period of $X$. \label{periode}\end{proposition}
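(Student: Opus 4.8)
The plan is to exploit the rigidity imposed by the fixed impulse spacing: I will first show that \emph{any} period of a non-constant periodic solution must be an integer multiple of $\omega$, and then read off from Lemma \ref{monotonie} that $\omega$ itself is a period, hence the smallest one.

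First I would locate a genuine jump of $X$. Since a solution of (\ref{ImpulsiveEquation}) has discontinuities of the first kind only at the impulse times, its discontinuity set $D$ satisfies $D \subseteq \{k\omega : k \in \mathbb{N}^*\}$, and $X$ jumps at $k\omega$ exactly when $\alpha X(k\omega^-) \neq 0$. I claim $D \neq \emptyset$. Indeed, if $D = \emptyset$ then either $\alpha = 0$, in which case $X$ solves the autonomous scalar equation $\dot X = g(X)$ on all of $\mathbb{R}^+$ and is therefore monotone—so a non-constant such solution cannot be periodic—or $\alpha \neq 0$ and $X(k\omega^-)=0$ for every $k$, so that $X(k\omega)=0$ and on each $[k\omega,(k+1)\omega)$ the autonomous solution starts and returns (as a left limit) to $0$; monotonicity of scalar autonomous trajectories then forces $g(0)=0$ and $X\equiv 0$. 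Both alternatives contradict non-constancy, so there is $k_0\omega \in D$.

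Next I would prove that every period $T>0$ satisfies $T = m\omega$ for some $m \in \mathbb{N}^*$. From $X(t+T)=X(t)$ one gets $X((t_0+T)^\pm)=X(t_0^\pm)$, so $X$ has identical one-sided limits at $t_0$ and $t_0+T$; hence $t_0 \in D \iff t_0+T \in D$, and $D$ is invariant under translation by $T$. Applying this to $k_0\omega$ gives $k_0\omega + T = j\omega$ for some integer $j$, i.e. $T=(j-k_0)\omega$, and since $T>0$ we get $T=m\omega$, $m\geq 1$. In particular every period is at least $\omega$. Finally, fixing one such period and using $X(m\omega)=X(0)$: by Lemma \ref{monotonie} i) the sequence $(X(k\omega))_k$ is strictly monotone unless $X(\omega)=X(0)$, and strict monotonicity would force $X(m\omega)\neq X(0)$ for $m\geq 1$; hence $X(\omega)=X(0)$, and Lemma \ref{monotonie} ii) shows $X$ is $\omega$-periodic. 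Since all periods are positive multiples of $\omega$ and $\omega$ is a period, $\omega$ is the smallest period.

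I expect the main obstacle to be the commensurability step, namely that the fixed $\omega$-spacing of the impulses forces every period to lie in $\omega\mathbb{N}^*$. This hinges on the impulsive structure (jumps can only sit at the $k\omega$) together with the exclusion of the degenerate no-jump configurations, where one must invoke monotonicity of the scalar autonomous flow to rule out a non-constant solution that keeps returning to $0$. Once periods are known to be multiples of $\omega$, the conclusion is a direct consequence of Lemma \ref{monotonie}.
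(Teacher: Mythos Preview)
Your proof is correct and follows essentially the same strategy as the paper: first show that any period must lie in $\omega\mathbb{N}^*$ by exploiting that jumps can only occur at the $k\omega$ and are preserved under translation by a period, then invoke Lemma~\ref{monotonie} to deduce $X(\omega)=X(0)$. The paper establishes the existence of a jump at $t=\omega$ by first proving sign-definiteness of $X$ (so $X(\omega^-)\neq 0$), whereas you argue more abstractly via the discontinuity set $D$; one small point to tighten: in the case $D=\emptyset$, $\alpha\neq 0$, your argument gives $X\equiv 0$ on $[\omega,\infty)$ directly, and you should then invoke periodicity once more to extend this to $[0,\omega)$ (for $t\in[0,\omega)$ pick $n$ with $t+nT\geq\omega$).
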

\begin{proof} 
If $\alpha=0$ (i.e. no impulse) there is no non-constant periodic solution of (\ref{ImpulsiveEquation}) because the solution of an autonomous scalar differential equation is monotonous. Then, in the rest of the proof, we suppose $\alpha \neq 0$. 

We set $\alpha \in (-1,+\infty)\backslash \{0\}$ and let $X$ be a periodic solution of (\ref{ImpulsiveEquation}) with period $P>0$. It is easy to show that either $X\equiv 0$, or for all $t\geq 0,\ X(t)>0 $, or for all $t\geq 0,\ X(t)<0$. Indeed, the sign of $X$ changes at most one time between two impulses and since $\alpha>-1$, the solution $X$ cannot vanish or change its sign due to an impulse.  Because $X$ is supposed to be non-constant, for all $t\in \mathbb{R}^+,\ X(t) \neq 0 $. Moreover, $(1+\alpha)X(\omega^-)=X(\omega)\neq 0 $ then $X(\omega^-)\neq 0$. 

 We first show that the smallest period of $X$ is a multiple of $\omega$. Let us assume that $X$ is $P$-periodic with $\omega \nmid P$. Because of the $P$-periodicity of $X$, \begin{equation} X(\omega)=X(P+\omega) \text{ and } X(\omega^-)=X((P+\omega)^-).\label{egaliteLemmePeriode}\end{equation} Moreover, since $X(\omega^-)\neq 0$ and $\alpha>-1$, then  \begin{equation} X(\omega)=(1+\alpha)X(\omega^-)\neq X(\omega^-).\label{inegaliteLemmePeriode} \end{equation} 
 Consequently, using (\ref{egaliteLemmePeriode}) and (\ref{inegaliteLemmePeriode}), we have \begin{equation} X(P+\omega)\neq X((P+\omega)^-).\label{FirstHand}\end{equation}
On the other hand, since $\omega \nmid (P+\omega)$, $t=P+\omega$ is not an impulsion time, so we have $$X(P+\omega)=X((P+\omega)^-).$$ This contradicts (\ref{FirstHand}). Therefore, the smallest period of $X$ is a multiple of $\omega$. 

Now, it suffices to show that $X(\omega):=(1+\alpha) X(\omega^-)=X(0)$.
 Let us assume that ${X(\omega)\neq X(0)}$. According to Lemma \ref{monotonie}, $(X(k\omega))_{k\in\mathbb{N}^*}$ is a strictly monotonous sequence, so for all $k\in \mathbb{N}^*$, $X(k\omega)\neq X(0)$, which is absurd because $X$ is $P$-periodic with $\omega | P$. Finally, the solution $X$ is $\omega$-periodic.  
\end{proof}

In this section, we proved the existence of a solution for System (\ref{ImpulsiveEquation}) and established some properties for its behaviour.  According to Propositions \ref{borneeconverge} and \ref{periode}, either a solution of (\ref{ImpulsiveEquation}) converges to a $\omega$-periodic solution, or it diverges to $\pm \infty$. Consequently, it suffices to study the periodic solutions of (\ref{ImpulsiveEquation}) to conclude on the asymptotic behaviour of any bounded solution. That is the focus of the next section.

\section{Existence and stability of periodic solutions}\label{SectionSolPeriodiques}
Throughout this section, hypotheses ($\mathbf{H_3}$) and ($\mathbf{H_4}$) hold true. We present results on the existence and stability of periodic solutions of (\ref{ImpulsiveEquation}). 

 We introduce $\varphi_\omega$, the flow of System (\ref{ImpulsiveEquation}) at time $\omega$ and without impulsion, defined by \begin{equation}
\varphi_\omega:   X_0\in U \mapsto X(\omega;X_0,0)\in U. 
\label{defVarphi}
\end{equation} 
Note that, since the first impulse in System (\ref{ImpulsiveEquation}) occurs at time $\omega$, for any value of $\alpha$ we have  \begin{equation}
\varphi_\omega(X_0)= X(\omega;X_0,0)= X(\omega^-;X_0,\alpha)=X(\omega;X_0,\alpha)/(1+\alpha). \label{RelationPhi}
\end{equation} 
We can also notice that, thanks to the existence and uniqueness of the solution of  (\ref{ImpulsiveEquation}), $\varphi_\omega$ is strictly increasing on $U$ and it can easily be shown that $\varphi_\omega$ is a bijection from $U$ to $U$.  In addition, if $g\in\mathcal{C}^p(U)$ with $p\in \mathbb{N}^*$, then $\varphi_\omega\in\mathcal{C}^p(U)$ (Cauchy-Lipschitz theorem) where $\mathcal{C}^p(U)=\{h:U\to\mathbb{R},\ h^{(p)}\text{ exists and is continuous}\}$.

For the sake of simplicity, we also introduce the function $\Gamma_\omega$ defined by \begin{equation}
\begin{array}{l r l}
\Gamma_\omega:&  X_0\in U\backslash\{\varphi_\omega^{-1}(0) \} &\mapsto \dfrac{X_0}{\varphi_\omega(X_0)}-1\in \mathbb{R}.
\end{array}\label{DefGamma}
\end{equation}
\begin{remark}
As a ratio of two continuous functions, the function $\Gamma_\omega$ is continuous everywhere $\varphi_\omega$ is non-zero, that is on $U\backslash\{\varphi_\omega^{-1}(0) \}$.
 In particular, if $g(0)=0$ and $U= \mathbb{R^+}$ (resp. $U=\mathbb{R^-}$), then $\varphi_\omega^{-1}(0)=0$ and ${\Gamma_\omega : \mathbb{R}^+_* \to (-1,+\infty)  }$ (resp. $\Gamma_\omega : \mathbb{R}^-_* \to (-1,+\infty)  $ ).
\label{ContinuitéGamma}\end{remark}

For a given initial condition $X_0$ and under hypotheses ($\mathbf{H_3}$) and ($\mathbf{H_4}$), if one knows the value of $\varphi_\omega(X_0)$ then one can conclude on the existence and the value of $\alpha$ for which the solution $X(\cdot;X_0,\alpha)$ of (\ref{ImpulsiveEquation}) is periodic. This is stated in the next proposition. 

\begin{proposition} Let hypotheses $(\mathbf{H_3})$ and $(\mathbf{H_4})$ hold true. For any initial condition $X_0\in U$ such that $\varphi_\omega(X_0)\neq 0$, the solution $X=X(\cdot;X_0,\alpha)$ of (\ref{ImpulsiveEquation})  is periodic if and only if $$\alpha=\Gamma_\omega (X_0).$$
Moreover, if $\varphi_\omega(X_0)=0$, either $g(0)=0$, and then for any $\alpha \in \mathbb{R}$ the solution $X(\cdot;X_0,\alpha)$ of (\ref{ImpulsiveEquation}) satisfies $X\equiv 0$, in particular $X$ is periodic ; or  $g(0)\neq 0$, and then for any choice of $\alpha>-1$, the solution $X(\cdot;X_0,\alpha)$ of (\ref{ImpulsiveEquation}) is not periodic. 
\label{PropGamma}
\end{proposition}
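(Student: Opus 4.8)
The plan is to reduce everything to the characterization of $\omega$-periodicity by the single scalar condition $X(\omega)=X(0)$ supplied by Lemma \ref{monotonie} ii), and then to translate that condition into an equation on $\alpha$ via relation (\ref{RelationPhi}). First, for the main equivalence I assume $\varphi_\omega(X_0)\neq 0$. Using (\ref{RelationPhi}) I would write $X(\omega;X_0,\alpha)=(1+\alpha)\varphi_\omega(X_0)$, so the condition $X(\omega)=X(0)=X_0$ reads $(1+\alpha)\varphi_\omega(X_0)=X_0$. Since $\varphi_\omega(X_0)\neq 0$, dividing shows this is equivalent to $\alpha=\frac{X_0}{\varphi_\omega(X_0)}-1=\Gamma_\omega(X_0)$ by the very definition (\ref{DefGamma}). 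Combined with Lemma \ref{monotonie} ii), this already yields that $X$ is $\omega$-periodic if and only if $\alpha=\Gamma_\omega(X_0)$.

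Second, I must upgrade ``$\omega$-periodic'' to ``periodic''. The backward implication is immediate: $\alpha=\Gamma_\omega(X_0)$ forces $X(\omega)=X(0)$, hence $\omega$-periodicity, hence periodicity. For the forward implication I would show that any periodic solution satisfies $X(\omega)=X(0)$: if $X$ is constant this is trivial, and if $X$ is non-constant then Proposition \ref{periode} guarantees it is $\omega$-periodic, so again $X(\omega)=X(0)$. Feeding this back into the computation above gives $\alpha=\Gamma_\omega(X_0)$, closing the equivalence.

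Third, for the ``Moreover'' part I treat the two sub-cases of $\varphi_\omega(X_0)=0$. If $g(0)=0$, then $0$ is an equilibrium of the autonomous flow, so $\varphi_\omega(0)=0$; since $\varphi_\omega$ is a strictly increasing bijection on $U$, the only preimage of $0$ is $0$ itself, whence $X_0=0$. The solution started at $0$ is then fixed at $0$ by the flow and unchanged by each impulse $(1+\alpha)\cdot 0=0$, so $X\equiv 0$ for every $\alpha$, which is trivially periodic. If instead $g(0)\neq 0$, which forces $U=\mathbb{R}$ by the standing assumptions on $U$, then $0$ is not an equilibrium; since a scalar autonomous solution is strictly monotone, $\varphi_\omega(0)\neq 0$, so $\varphi_\omega^{-1}(0)\neq 0$ and therefore $X_0=\varphi_\omega^{-1}(0)\neq 0$. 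By (\ref{RelationPhi}), $X(\omega)=(1+\alpha)\varphi_\omega(X_0)=0\neq X_0=X(0)$ for every $\alpha>-1$. Were $X$ periodic, it would be non-constant (as $X(\omega)\neq X(0)$), hence $\omega$-periodic by Proposition \ref{periode}, forcing $X(\omega)=X(0)$, a contradiction; thus no admissible $\alpha$ makes the solution periodic.

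The algebra and the two equivalences are routine; the only points requiring care, which I regard as the main (if mild) obstacle, are the passage from an arbitrary period to $\omega$-periodicity---handled by Proposition \ref{periode} together with the constant-solution case---and, in the degenerate case $g(0)\neq 0$, the verification that $X_0\neq 0$, for which I rely on the strict monotonicity of scalar autonomous solutions and the bijectivity of $\varphi_\omega$.
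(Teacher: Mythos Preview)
Your proof is correct and follows essentially the same route as the paper: both arguments reduce periodicity to the scalar condition $X(\omega)=X(0)$ via Lemma~\ref{monotonie} and Proposition~\ref{periode}, rewrite it through (\ref{RelationPhi}) as $(1+\alpha)\varphi_\omega(X_0)=X_0$, and then treat the two degenerate sub-cases of $\varphi_\omega(X_0)=0$ by the same ideas (uniqueness/bijectivity of the flow to force $X_0=0$ when $g(0)=0$, and strict monotonicity of the scalar flow to force $X_0\neq 0$ when $g(0)\neq 0$). Your handling of the constant-versus-non-constant split and your use of the bijectivity of $\varphi_\omega$ are slightly more explicit than the paper's wording, but the substance is the same.
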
 

 \begin{proof}
From Lemma \ref{monotonie} and Proposition \ref{periode}, $X=X(\cdot;X_0,\alpha)$ is periodic if and only if $X(\omega)=X_0$, that is, from (\ref{RelationPhi}), $(1+\alpha)\varphi_\omega(X_0)=X_0$. 
If $\varphi_\omega (X_0) \neq 0$, $X$ is periodic if and only if  $(1+\alpha)\varphi_\omega(X_0)=X_0$, that is, from (\ref{DefGamma}), $\alpha=\Gamma_\omega (X_0)$.
In the remainder of the proof we assume $\varphi_\omega(X_0)=0$.

If $g(0)=0$, then $X\equiv 0$. Indeed $Y\equiv 0$ is a solution of (\ref{ImpulsiveEquation}) and, for any $\alpha\in \mathbb{R}$, from (\ref{RelationPhi}), $X(\omega;X_0,\alpha)=(1+\alpha)\varphi_\omega (X_0)=0=Y(\omega)$. By the uniqueness of the solution, for any $\alpha\in \mathbb{R}$, $X=Y\equiv 0$, in particular $X$ is periodic.

On the other hand, if $g(0)\neq 0$, then $X(0)\neq 0$. Indeed, if $X(0)=0$, then $X$ is strictly monotonous on $\left[0,\omega\right)$ and $\varphi_\omega(X_0)\neq 0$, hence a contradiction. Moreover, for any $\alpha\in \mathbb{R}$, $X(\omega)=(1+\alpha)\varphi_\omega (X_0)=0$. Consequently $X(\omega)\neq X(0)$ and thanks to Proposition \ref{periode} $X$ is not periodic.  
\end{proof}

In the remainder of this section, we study the stability of the periodic solutions of (\ref{ImpulsiveEquation}).  For the sake of simplicity, for any $\alpha>-1$ and $\omega>0$ we introduce the function $\gamma_{\omega,\alpha}$ defined by
\begin{equation}
\gamma_{\omega,\alpha}: X_0\in U \mapsto \varphi_\omega (X_0)-\frac{X_0}{1+\alpha}\in U.
\label{defgammaalpha}
\end{equation}
\begin{lemma}
Let hypotheses $(\mathbf{H_3})$ and $(\mathbf{H_4})$ hold true and let $X_0\in U$. Let $K^+:=\gamma_{\omega,\alpha}^{-1}(\{0\})\cap (X_0,+\infty)$ and $K^-:=\gamma_{\omega,\alpha}^{-1}(\{0\})\cap (-\infty,X_0)$.  Then \begin{enumerate}
\item[i)] if $\gamma_{\omega,\alpha} (X_0)>0$, either $K^+ \neq \emptyset$ and then $X(\ \cdot \ ; X_0, \alpha)$ converges to the periodic solution $Y(\ \cdot \ ;Y^+,\alpha)$ where $Y^+=\min (K^+)$, or $K^+=\emptyset$ and $X(\ \cdot \ ; X_0, \alpha)$ diverges to  $+\infty$;
\item[ii)] if $\gamma_{\omega,\alpha} (X_0)<0$ , either $K^-\neq \emptyset$ and then $X(\ \cdot \ ; X_0, \alpha)$ converges to the periodic solution $Y(\ \cdot \ ;Y^-,\alpha)$ where $Y^-=\max (K^-)$, or $K^-=\emptyset$ and $X(\ \cdot \ ; X_0, \alpha)$ diverges to  $-\infty$;
\item[iii)] if $\gamma_{\omega,\alpha} (X_0)=0$, then $X(\ \cdot \ ; X_0, \alpha)$ is $\omega$-periodic.
\end{enumerate}
\label{monotonieselongamma}
\end{lemma}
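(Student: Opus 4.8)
The plan is to reduce the whole statement to the dynamics of a scalar stroboscopic map and then lift the discrete information to the continuous trajectory. Write $x_k := X(k\omega;X_0,\alpha)$ and introduce the period map $P := (1+\alpha)\varphi_\omega$ on $U$. By (\ref{RelationPhi}) one has $x_{k+1}=P(x_k)$, and since $\varphi_\omega$ is a strictly increasing continuous bijection of $U$ and $1+\alpha>0$, the map $P$ is a strictly increasing homeomorphism. Its fixed points are exactly the zeros of $\gamma_{\omega,\alpha}$, because $P(x)=x \iff (1+\alpha)\varphi_\omega(x)=x \iff \gamma_{\omega,\alpha}(x)=0$, and by Lemma \ref{monotonie}ii) these are precisely the initial data of $\omega$-periodic solutions. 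Moreover $x_1-x_0=(1+\alpha)\gamma_{\omega,\alpha}(X_0)$, so the sign of $\gamma_{\omega,\alpha}(X_0)$ determines, through Lemma \ref{monotonie}i), whether $(x_k)$ is strictly increasing, strictly decreasing, or constant. Statement iii) is then immediate: $\gamma_{\omega,\alpha}(X_0)=0$ makes $X_0$ a fixed point of $P$, i.e. $X(\omega)=X_0$, hence $X$ is $\omega$-periodic.

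For i) (with ii) entirely symmetric), assume $\gamma_{\omega,\alpha}(X_0)>0$, so $(x_k)$ is strictly increasing. Suppose first $K^+\neq\emptyset$. I would begin by checking that $Y^+:=\min K^+$ is attained: $\gamma_{\omega,\alpha}$ is continuous, so its zero set is closed in $U$; the infimum of $K^+$ cannot equal $X_0$ (continuity would then force $\gamma_{\omega,\alpha}(X_0)=0$), hence $\inf K^+$ is itself a zero lying strictly above $X_0$, so it belongs to $K^+$. Since $P$ is increasing and fixes $Y^+$, a one-line induction from $x_0<Y^+$ yields $x_k<Y^+$ for all $k$; thus $(x_k)$ is increasing and bounded above, converging to some $L\in(X_0,Y^+]$. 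Continuity of $P$ gives $P(L)=L$, so $L\in K^+$ and minimality forces $L=Y^+$.

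It then remains to upgrade $x_k\to Y^+$ to convergence of the entire trajectory to $Y:=Y(\cdot;Y^+,\alpha)$. On each interval $[k\omega,(k+1)\omega)$, autonomy between impulses together with right-continuity gives $X(k\omega+s;X_0,\alpha)=X(s;x_k,0)$ and likewise $Y(k\omega+s)=X(s;Y^+,0)$, so the gap equals $|X(s;x_k,0)-X(s;Y^+,0)|$. By continuous dependence of the impulse-free flow on its initial datum (Proposition \ref{ExistenceUnicité}) and uniform continuity of $(s,x)\mapsto X(s;x,0)$ on the compact set $[0,\omega]\times(\{x_k\}\cup\{Y^+\})$, this tends to $0$ uniformly in $s$ as $k\to\infty$, which is exactly the convergence in the sense of the definition. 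If instead $K^+=\emptyset$, the increasing sequence $(x_k)$ cannot be bounded above (its limit would otherwise be a fixed point of $P$ in $(X_0,+\infty)$, i.e. an element of $K^+$), so $x_k\to+\infty$; using that scalar autonomous flows are monotone in time, the infimum of $X$ over $[k\omega,(k+1)\omega)$ equals $\min\bigl(x_k,\varphi_\omega(x_k)\bigr)=\min\bigl(x_k,x_{k+1}/(1+\alpha)\bigr)$, and both entries diverge, so $\inf_{t\geq k\omega}X(t)\to+\infty$ and hence $X(t)\to+\infty$.

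The main obstacle is this final lifting step in both subcases: the monotone-sequence argument only controls the sampled values $X(k\omega)$, and one must exclude the solution dipping down or oscillating between impulses. What makes it go through is the autonomous structure of (\ref{ImpulsiveEquation}) between impulses, through two of its consequences: the monotonicity in time of solutions of the scalar equation $\mathbf{d}X/\mathbf{d}t=g(X)$ (used for divergence), and the uniform continuity of the flow in the initial condition over a single period (used for convergence to the periodic orbit).
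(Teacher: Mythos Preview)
Your proof is correct and follows the same skeleton as the paper's: translate the sign of $\gamma_{\omega,\alpha}(X_0)$ into monotonicity of the sampled sequence $(X(k\omega))$ via Lemma~\ref{monotonie}, check that $Y^+$ is attained by a closed-set argument, and then pass from the discrete sequence to the full trajectory.

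The one substantive difference is in how the lifting step is handled. The paper invokes Proposition~\ref{borneeconverge} (Theorem 12.5 of Bainov--Simeonov) as a black box: once the sequence is shown to be increasing and bounded above by the periodic solution $Y(\cdot;Y^+,\alpha)$, that proposition directly yields convergence to some periodic solution, and minimality of $Y^+$ identifies it. You instead bypass Proposition~\ref{borneeconverge} entirely and work with the period map $P=(1+\alpha)\varphi_\omega$: continuity of $P$ gives $P(L)=L$ for the limit $L$ of $(x_k)$, and then you lift to the full trajectory by uniform continuity of the impulse-free flow on a compact set. This is more self-contained and makes the divergence case in particular more explicit (the paper's one-line ``either diverges or converges to periodic'' leans on the cited theorem for why unboundedness of the samples forces $X(t)\to+\infty$, whereas you spell out the infimum-over-a-period argument). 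The cost is a slightly longer proof; the benefit is that nothing is deferred to an external reference.
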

\begin{proof}
Proofs of points \textit{i)} and \textit{ii)} are similar, hence we only show point \textit{i)}.

We first assume that $\gamma_{\omega,\alpha }(X_0)>0$ and show that, when $K^+\neq \emptyset$, $Y^+$ is well defined. Let us assume that $K^+\neq \emptyset$. Since  $\gamma_{\omega,\alpha}$ is a continuous function and $\{0\}$ is a closed set, then $\gamma_{\omega,\alpha}^{-1}(\{0\})$ is a closed set. Moreover, since $\gamma_{\omega,\alpha }  (X_0)>0$, then $X_0\notin \gamma_{\omega,\alpha}^{-1}(\{0\})$ and we can write $K^+=\gamma_{\omega,\alpha}^{-1}(\{0\})\cap [X_0,+\infty)$. Finally, $K^+$ is a non-empty closed set which admits a lower bound ($X_0$), then it admits a minimum $Y^+>X_0$.
 
 We now prove \textit{i)}. Hereafter, $X=X(\ \cdot \ ;X_0,\alpha)$. We notice that inequality $\gamma_{\omega,\alpha } (X_0)>0$ is equivalent to $X(\omega)>X(0)$. Then, according to Lemma \ref{monotonie} and Proposition \ref{borneeconverge}, the sequence $(X(k\omega))_{k\in \mathbb{N} }$ is (strictly) increasing and either $X$ diverges to $+\infty$, or it converges to a periodic solution. Consequently, if there is no periodic solution with initial condition $K_0>X_0$ (i.e. $K^+= \emptyset$), then $X$ diverges to $+\infty$. On the other hand, if $K^+\neq \emptyset$, then we have for all $t\geq 0,\ X(t) \leq Y(t;Y^+,\alpha)$, so $X$ is bounded and converges to a periodic solution of (\ref{ImpulsiveEquation}). From the definition of $Y^+$ and since $X(t) \leq Y(t)$ for all $t\geq 0$, we can conclude that $X$ converges to $Y$. 
 
Point \textit{iii)} is equivalent to Lemma  \ref{monotonie}. Indeed, using (\ref{RelationPhi}), $X$ is periodic if and only if $X(\omega)=(1+\alpha)\varphi_\omega (X_0)=X_0$, that is $\gamma_{\omega,\alpha } (X_0)=0$. 
\end{proof}

Hereafter, we adapt the usual notions of stability used for non-impulsive differential equations to our problem, as follows. 

\begin{definition}
Let hypotheses $(\mathbf{H_3})$ and $(\mathbf{H_4})$ be satisfied. Let $X_*\in U$ be such that $\Gamma_\omega (X_*)=\alpha$ and $I\subset U$ be an interval. Then, the periodic solution $X(\ \cdot \ ; X_*,\alpha)$ is said to be\begin{enumerate}
\item[i)]stable if for any neighbourhood $V$ of $X_*$, there is a neighbourhood $W$ of $X_*$ such that for all $X_0\in W$ and for all $k\in \mathbb{N}, \ X(k\omega,X_0,\alpha)\in V$;
\item[ii)] asymptotically stable if it is stable and there exists a neighbourhood $V$ of $X_*$ such that for any $X_0\in V$,  $\underset{k\to +\infty}{\lim} X(k\omega ;X_0,\alpha) = X_*$;
\item[iii)] unstable if it is not stable;
\item[iv)] attractive on $I$ if for any $X_0\in I$, the sequence $\left( |X(k\omega;X_0,\alpha)-X_*|\right)_{k\in \mathbb{N}}$ decreases and converges to zero;
\item[v)] repulsive on $I$ if for any $X_0\in I$, the sequence $\left( |X(k\omega;X_0,\alpha)-X_*|\right)_{k\in \mathbb{N}}$ increases.
\end{enumerate}
\end{definition}

\begin{remark}
As an immediate consequence of Lemma \ref{monotonieselongamma}, under hypotheses ($\mathbf{H_3}$) and ($\mathbf{H_4}$) the stability of a periodic solution $X(\ \cdot \ ;X_0,\alpha)$ is given by the sign of $\gamma_{\omega,\alpha}$ on both sides of $X_0$. The stability of periodic solutions of (\ref{ImpulsiveEquation}) can however also be sometimes deduced from the graph of $\Gamma_\omega$. Indeed, let $X_0\in U$ be such that $\varphi(X_0)> 0$, then, from (\ref{DefGamma}) and (\ref{defgammaalpha}), $\gamma_{\omega,\alpha} (X_0)>0$ (resp. $\gamma_{\omega,\alpha} (X_0)<0$, resp. $\gamma_{\omega,\alpha} (X_0)=0$) if and only if $\Gamma_\omega (X_0) < \alpha$ (resp. $\Gamma_\omega (X_0) > \alpha$, resp. $\Gamma_\omega (X_0) = \alpha$)
and we can conclude on the asymptotic behaviour of $X(\ \cdot \ ; X_0, \alpha)$ (Lemma \ref{monotonieselongamma}) based on the graph of $\Gamma_\omega$. For example, let $U=\mathbb{R}^+$ and $X_0>0$ be such that $\Gamma_\omega(X_0)=\alpha$. If $\Gamma_\omega$ is strictly increasing (resp. decreasing) on a neighbourhood of $X_0$, then $X(\ \cdot \ ;X_0,\alpha)$ is asymptotically stable (resp. unstable). 
\label{LireStabilité}
\end{remark}

From Proposition \ref{borneeconverge}, under ($\mathbf{H_3}$) and ($\mathbf{H_4}$), it suffices to determine the initial values of the periodic solutions of System (\ref{ImpulsiveEquation}) and their stability to conclude on the asymptotic behaviour of all the solutions.  In Sections \ref{PropGénérales} and \ref{SectionSolPeriodiques}, we have shown that, under hypotheses ($\mathbf{H_3}$) and ($\mathbf{H_4}$), for any $X_0\in U$ such that $\varphi_\omega(X_0)\neq 0$ there exists a unique $\alpha \in \mathbb{R}$ such that $X(\ \cdot \ ;X_0,\alpha)$ is a periodic solution of (\ref{ImpulsiveEquation}) (Proposition \ref{PropGamma}). 
 Conversely, for any given $\alpha>-1$, we discussed the existence of $X_0=X_0(\alpha) \in U$ such that the solution $X=X(\cdot;X_0,\alpha)$ of (\ref{ImpulsiveEquation}) is periodic.

As a consequence of Lemma \ref{monotonieselongamma}, the initial conditions of the periodic solutions of (\ref{ImpulsiveEquation}) correspond to the solutions $X_0$ of equation $\gamma_{\omega,\alpha}(X_0)=0$, with $\gamma_{\omega,\alpha}$  defined by  (\ref{defgammaalpha}). Necessary and sufficient conditions to determine the stability of the periodic solutions are given in Lemma \ref{monotonieselongamma} and Remark \ref{LireStabilité}.

 However, in general, the expression of $\varphi_\omega$, defined by (\ref{defVarphi}), is not known and the exact values of the initial conditions of the periodic solutions of (\ref{ImpulsiveEquation}) cannot be determined. In that case \textit{ad hoc} studies have to be performed. In Section \ref{sectionConvex}, we give a sufficient condition for $\varphi_\omega$ to be convex. This result will be used in Sections \ref{ChapitreImpulseTbet} and \ref{SectionBio}.

\section{Convexity of the flow of an autonomous equation} \label{sectionConvex}
In this section, we aim at investigating the convexity of the flow $\varphi_\omega$, defined by (\ref{defVarphi}), which can further be  useful to determine the number of periodic solutions of (\ref{ImpulsiveEquation}) when ($\mathbf{H_3}$) and ($\mathbf{H_4}$) hold true (Proposition \ref{PropGamma}). In Section \ref{Sectionhn} we study a Cauchy problem where the right-hand side of the equation is a piecewise linear function and we extrapolate this result to continuously differentiable functions in Section \ref{Sectionh}.

\subsection{Preliminary results}\label{Sectionhn}
We set $a<b\in \mathbb{R}$, $n\in \mathbb{N^*}$ and $(c_i)_{0\leq i \leq n}$ a subdivision of $[a,b]$ such that $c_0=a,\ c_n=b$ and, for all $i\in \{ 0,\dots, n-1 \}$, $c_i<c_{i+1}$.  Then, we define the piecewise linear function $h_n$, on $[a,b]$, by \begin{equation}
\begin{array}{l l}
h_n  (x)= a_{i+1}x+b_{i+1} & \text{ for } x\in [c_i,c_{i+1}),\ i\in \{0,\dots ,n-1\},
\end{array}\label{Def_fn}\nonumber
\end{equation}
where real numbers $(a_i)_{1\leq i \leq n}$, $(b_i)_{1\leq i \leq n}$ are such that $h_n$ is positive and continuous, that is \begin{equation}
h_n(c_0)>0\text{ and }\forall 1\leq i \leq n-1,\ a_ic_i+b_i=a_{i+1}c_i+b_{i+1}=h_n(c_i)>0.\label{Hyp_hn}
\end{equation}

Then, we introduce the autonomous Cauchy problem 
\begin{equation}
\left \{ \begin{array}{r c l} 
X'(t)&=&h_n(X(t)),\ t\geq 0,\\
X(0)&=&X_0.
\end{array}\right. \label{System_fn}
\end{equation} 
For the sake of simplicity, we set $h_n(c_n)=0$ such that for any $X_0\in [a,b)$, the solution of (\ref{System_fn}) is increasing and remains in $[a,b)$, and $X=c_n=b$ is the unique steady state of (\ref{System_fn}).

Note that existence and uniqueness for the solution $X$ of System (\ref{System_fn}) is guaranteed by Cauchy-Lipschitz theorem, most of the time this solution will be denoted by $X_n$. In particular, the restriction of $X_n$ on each interval $[c_{i-1},c_i]$, $i\in \{ 1,..., n-1 \}$,  is solution of a linear equation.

 In the following, we first determine the values of $t$ such that $X_n(t)=c_i$ for $i=0,...,n-1$ (Lemma \ref{PropLambdak}) and then we provide an explicit expression for $X_n$ (Proposition \ref{PropSolXn}).

Since $h_n(x)>0$ for all $x\in [c_0,c_n)$ and $h_n(c_n)=0$, it is clear that for all $X_0\in (c_0,c_n)$, $X_n$ is an increasing function and $\lim_{t\to +\infty} X_n(t)=c_n$. 
Straightforward calculations show that, if $X_0\in [c_{i},c_{i+1})$ for $i\in \{0,..., n-2 \}$, then $X_n(t)=c_{i+1}$ if and only if $t=\lambda_{i+1}(X_0)$, where $\lambda_{i+1}$ is defined for all $x\in [c_i,c_{i+1})$ by
\begin{equation}\lambda_{i+1}(x)= \begin{cases}
  \frac{1}{a_{i+1}}\ln \left(\frac{h_n(c_{i+1})}{h_n(x)} \right), & \mbox{if } a_{i+1} \neq 0, \\
  \frac{c_{i+1}-x}{b_{i+1}},  & \mbox{if } a_{i+1}= 0.
\end{cases} \label{FormuleLambdap}\nonumber \end{equation}
By induction, we deduce the expression of $\lambda_k(X_0)$ for any $X_0<c_k$.
\begin{lemma}
Let $i\in \{ 0,\dots, n-2 \}$, $X_0\in [c_i,c_{i+1})$, and $X_n$ be the solution of (\ref{System_fn}). 
Let  $k\in \{i+1,...,n-1 \}$, then $X_n(t)=c_k$ if and only if $t=\lambda_k(X_0)$, defined by \begin{equation}
\lambda_k(X_0)=\lambda_{i+1}(X_0)+\sum_{p=i+1}^{k-1}\lambda_{p+1}(c_{p}).\label{ExpressionLambdak}
\end{equation}  \label{PropLambdak}
\end{lemma}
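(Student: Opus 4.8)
The plan is to argue by induction on $k$, exploiting the fact that (\ref{System_fn}) is autonomous so that the transit times across successive linear pieces simply accumulate. The essential preliminary fact is the one established immediately before the statement: whenever an initial condition lies in $[c_j,c_{j+1})$, the solution reaches $c_{j+1}$ exactly at the time $\lambda_{j+1}$ evaluated at that initial condition. For the base case $k=i+1$ the sum $\sum_{p=i+1}^{k-1}$ in (\ref{ExpressionLambdak}) is empty, so the claimed formula reduces to $\lambda_{i+1}(X_0)=\lambda_{i+1}(X_0)$, which is precisely this preliminary fact.

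For the inductive step, I would assume the formula holds for some $k$ with $i+1\leq k\leq n-2$ and deduce it for $k+1$. Since $h_n>0$ on $[c_0,c_n)$, the solution $X_n$ is strictly increasing, hence it attains the value $c_k$ at a single time, which by the induction hypothesis is $t_k:=\lambda_k(X_0)$. By uniqueness (Cauchy--Lipschitz), the autonomous problem (\ref{System_fn}) defines a flow, so the restriction of $X_n$ to $t\geq t_k$ coincides with the solution issued from $c_k$ at time zero, shifted by $t_k$. Applying the preliminary fact to the initial condition $c_k\in[c_k,c_{k+1})$, that shifted trajectory reaches $c_{k+1}$ after an additional time $\lambda_{k+1}(c_k)$; therefore $X_n$ reaches $c_{k+1}$ at $\lambda_k(X_0)+\lambda_{k+1}(c_k)$. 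Naming this time $\lambda_{k+1}(X_0)$ and substituting the induction hypothesis gives $\lambda_{k+1}(X_0)=\lambda_{i+1}(X_0)+\sum_{p=i+1}^{k-1}\lambda_{p+1}(c_p)+\lambda_{k+1}(c_k)=\lambda_{i+1}(X_0)+\sum_{p=i+1}^{k}\lambda_{p+1}(c_p)$, which is exactly (\ref{ExpressionLambdak}) at index $k+1$.

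The equivalence is then immediate: the \emph{if} direction is the computation just performed, and the \emph{only if} direction is the uniqueness of the crossing time, which again follows from the strict monotonicity of $X_n$. I expect the only delicate point to be the clean justification of the time-translation argument, namely that once the trajectory reaches $c_k$ its future is an exact time-shift of the trajectory started at $c_k$; this is where the autonomy of (\ref{System_fn}) together with uniqueness must be invoked explicitly. Once this semigroup property is in hand, the remainder is merely the bookkeeping of the telescoping sum of transit times across the linear pieces.
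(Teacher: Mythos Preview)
Your proposal is correct and follows exactly the approach the paper indicates: the paper itself gives no detailed proof, merely stating ``By induction, we deduce the expression of $\lambda_k(X_0)$ for any $X_0<c_k$'' immediately before the lemma. Your argument fills in precisely the induction that is intended, with the time-translation (flow) property and strict monotonicity supplying, respectively, the inductive step and the uniqueness direction.
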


In other words, $\lambda_k(X_0)$ corresponds to the time for the solution $X_n$ of (\ref{System_fn}) to go from $X_0$ to $c_k$. Since $c_n=\lim_{t\to +\infty}X_n(t)$, we set $\lambda_n(X_0)=+\infty$. If $X_0\geq c_p$, for $p\in \{0,\dots , n-1\}$, we set $\lambda_p(X_0)=0$. 

\begin{proposition}
Let $i \in \{ 0,...,n-1 \}$, $X_0\in [c_i,c_{i+1})$ and $X_n$ be the solution of (\ref{System_fn}). Then, if $t\in [0,\lambda_{i+1}(X_0))$,
\begin{equation} X_n(t)= \begin{cases}
  \frac{h_n(X_0)}{a_{i+1}}e^{a_{i+1}t}-\frac{b_{i+1}}{a_{i+1}}, & \mbox{if } a_{i+1} \neq 0, \\
  X_0+b_{i+1}t,  & \mbox{if } a_{i+1}= 0. \\ 
\end{cases} \nonumber \label{XnCi}
\end{equation}
If $t\in \left[\lambda_k(X_0),\lambda_{k+1}(X_0)\right)$ for $k\in \{i+1,...,n-1\}$, then
\begin{equation}
X_n(t)= \begin{cases}
\left( c_k-\frac{b_{k+1}}{a_{k+1}}\right)e^{a_{k+1}(t-\lambda_k(X_0))}-\frac{b_{k+1}}{a_{k+1}}, & \mbox{if } a_{k+1} \neq 0, \\
c_k+b_{k+1}\left( t-\lambda_k(X_0)\right), & \mbox{if } a_{k+1} = 0.
\end{cases}\label{XnCk}
\end{equation}  \label{PropSolXn}  
\end{proposition}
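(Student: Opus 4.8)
The plan is to exploit that, since $h_n$ is affine on each subinterval, the Cauchy problem (\ref{System_fn}) reduces on every $[c_k,c_{k+1})$ to a constant-coefficient \emph{linear} ODE, $X_n'=a_{k+1}X_n+b_{k+1}$, which can be integrated in closed form. The full expression for $X_n$ is then assembled by solving these linear equations one interval at a time and gluing the pieces at the crossing times $\lambda_k(X_0)$ furnished by Lemma \ref{PropLambdak}.

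First I would handle the initial piece. Since $X_0\in[c_i,c_{i+1})$ and $X_n$ is increasing, the solution remains in $[c_i,c_{i+1})$ for $t\in[0,\lambda_{i+1}(X_0))$ and there satisfies $X_n'=a_{i+1}X_n+b_{i+1}$ with $X_n(0)=X_0$. When $a_{i+1}\neq 0$, the general solution is $X_n(t)=Ce^{a_{i+1}t}-b_{i+1}/a_{i+1}$, and the initial condition forces $C=X_0+b_{i+1}/a_{i+1}=h_n(X_0)/a_{i+1}$, which gives the first announced formula; when $a_{i+1}=0$ the equation is simply $X_n'=b_{i+1}$, with solution $X_0+b_{i+1}t$.

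For the later pieces I would proceed by induction on $k\in\{i+1,\dots,n-1\}$. By Lemma \ref{PropLambdak} the solution reaches $c_k$ exactly at $t=\lambda_k(X_0)$, so that $X_n(\lambda_k(X_0))=c_k$, and on $[\lambda_k(X_0),\lambda_{k+1}(X_0))$ it stays in $[c_k,c_{k+1})$ and obeys $X_n'=a_{k+1}X_n+b_{k+1}$. Integrating this linear equation after the time shift $s=t-\lambda_k(X_0)$, using the entry value $X_n(\lambda_k(X_0))=c_k$ as initial datum, pins down the integration constant to $c_k+b_{k+1}/a_{k+1}=h_n(c_k)/a_{k+1}$ in the case $a_{k+1}\neq 0$, and reduces to an affine segment when $a_{k+1}=0$, yielding (\ref{XnCk}). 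Continuity of $X_n$, guaranteed by the Cauchy-Lipschitz theorem, together with the continuity of $h_n$ at the nodes (see (\ref{Hyp_hn})), ensures that the successive pieces agree at the junction times $\lambda_k(X_0)$, so they concatenate into a single solution on $[0,+\infty)$.

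I do not anticipate a genuine obstacle: each step is the explicit integration of a constant-coefficient linear ODE, and the only care required is the bookkeeping — reading off the correct entry value $c_k$ and entry time $\lambda_k(X_0)$ for each piece from Lemma \ref{PropLambdak}, and consistently separating the degenerate case $a_{k+1}=0$ from the generic one. The one mildly delicate point is simply to impose the entry condition $X_n(\lambda_k(X_0))=c_k$ accurately, which is what fixes the integration constant on each branch and produces the stated expression.
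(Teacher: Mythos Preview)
Your approach is exactly the paper's: the authors' entire argument is the single sentence that since $X_n(\lambda_k(X_0))=c_k$ by Lemma~\ref{PropLambdak}, one only has to solve the linear equation $X'=a_{k+1}X+b_{k+1}$ on each piece, which is precisely what you carry out in detail. (Incidentally, your computation of the integration constant $c_k+b_{k+1}/a_{k+1}=h_n(c_k)/a_{k+1}$ is the correct one and agrees with what the paper uses downstream in the proof of Lemma~\ref{LemmeConvexité}; the minus sign displayed in~(\ref{XnCk}) appears to be a typo.)
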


Since, from Lemma \ref{PropLambdak}, $X_n(\lambda_k(X_0))=c_k$, then to prove Proposition \ref{PropSolXn} one only has to solve the linear differential equation $X'(t)=a_{k+1}X(t)+b_{k+1}$ .

We now consider $\omega>0$ and $\varphi_{\omega}^n$ , the flow of (\ref{System_fn}) at time $\omega$, defined by\begin{equation}
\begin{array}{l r l l l}
\varphi_{\omega}^n :&  X_0\in [a,b] &\mapsto X_n(\omega)\in [a,b],\end{array}\label{Def_phin}
\end{equation}
where $X_n$ is the solution of (\ref{System_fn}).

  Note that, since $h_n$ is continuous, $\varphi_{\omega}^n$ is also continuous.   If we set $t=\omega$, Proposition \ref{PropSolXn} provides an explicit expression for $\varphi_{\omega}^n(X_0)$. 
In the following, we determine the convexity of $\varphi_\omega^n$ by studying its second derivative. However, if $X_0=c_k$ or $\varphi_\omega^n(X_0)=c_k$, the expression of $\varphi_\omega^n$ is not the same on both sides of $X_0$ and, in that case, we cannot directly compute the derivative of $\varphi_\omega^n$ . Consequently, we first study the convexity of $\varphi_\omega^n$ everywhere $X_0\neq c_k$ and $\varphi_\omega^n(X_0)\neq c_k$, and then we conclude on the convexity of $\varphi_\omega^n$ on the whole interval $(a,b)$. For the sake of simplicity we define \begin{equation}J_n=\{c_k,\ k=1,\dots,n-1 \} \cup \{(\varphi_\omega^n)^{-1}(c_k),\ k=1,\dots,n-1 \}.\label{defJn} \end{equation}
\begin{lemma}
Let $X_0 \in (a,b)\backslash J_n$ and $i,k\in \{0,\dots, n-1\}$ such that $X_0\in (c_i,c_{i+1})$ and $\varphi_{\omega}^n(X_0)\in (c_k,c_{k+1})$. 
 Then, on a neighbourhood of $X_0$, $\varphi_{\omega}^n$ is $\mathcal{C}^\infty$  and strictly convex (resp. strictly concave, resp. affine) if and only if $a_{k+1}>a_{i+1}$ (resp. $a_{k+1}<a_{i+1}$, resp. $a_{k+1}=a_{i+1}$).
\label{LemmeConvexité}
\end{lemma}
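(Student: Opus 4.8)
The plan is to exploit the explicit formula for $X_n$ from Proposition \ref{PropSolXn}, evaluated at $t=\omega$, and to reduce the whole statement to a sign computation for the second derivative of $\varphi_\omega^n$. First I would observe that the two hypotheses $X_0\in(c_i,c_{i+1})$ and $\varphi_\omega^n(X_0)\in(c_k,c_{k+1})$ are \emph{open} conditions; since $\varphi_\omega^n$ is continuous, they persist on a neighbourhood $V$ of $X_0$, on which the indices $i$ and $k$ stay fixed. The condition $\varphi_\omega^n(X_0)\in(c_k,c_{k+1})$ means precisely that $\omega\in(\lambda_k(X_0),\lambda_{k+1}(X_0))$, so on $V$ the relevant branch of $X_n(\omega)$ is the one given by (\ref{XnCk}) with $t=\omega$.

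Next I would use Lemma \ref{PropLambdak}, i.e. (\ref{ExpressionLambdak}), to write $\lambda_k(X_0)=\lambda_{i+1}(X_0)+S$, where $S=\sum_{p=i+1}^{k-1}\lambda_{p+1}(c_p)$ is constant on $V$ (it only involves the fixed grid points $c_p$). Substituting this, together with the expression of $\lambda_{i+1}(X_0)$ in terms of $h_n(X_0)=a_{i+1}X_0+b_{i+1}$, into (\ref{XnCk}) yields, in the generic case $a_{i+1}\neq 0\neq a_{k+1}$, a closed form of the shape
\[
\varphi_\omega^n(X_0)=A\,\bigl(a_{i+1}X_0+b_{i+1}\bigr)^{a_{k+1}/a_{i+1}}+B,
\]
where $A$ and $B$ are constants on $V$. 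Because this is a composition of the affine map $X_0\mapsto h_n(X_0)>0$ with the smooth functions $\exp$, $\ln$ and a real power, $\varphi_\omega^n$ is automatically $\mathcal{C}^\infty$ on $V$, which settles the regularity claim.

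It then remains to determine the sign of the second derivative. Writing $\beta=a_{k+1}/a_{i+1}$ and $u=h_n(X_0)>0$, differentiation gives $(\varphi_\omega^n)'=A\beta a_{i+1}u^{\beta-1}$ and $(\varphi_\omega^n)''=A\beta a_{i+1}^2(\beta-1)u^{\beta-2}$. Rather than track the individual signs of $A$ and $\beta$, I would invoke the fact that $\varphi_\omega^n$ is strictly increasing (solutions of the autonomous scalar equation (\ref{System_fn}) cannot cross, by Cauchy--Lipschitz), so $(\varphi_\omega^n)'>0$ forces $A\beta a_{i+1}>0$. Factoring $(\varphi_\omega^n)''=(A\beta a_{i+1})\,a_{i+1}(\beta-1)u^{\beta-2}$ and using the identity $a_{i+1}(\beta-1)=a_{k+1}-a_{i+1}$, the sign of $(\varphi_\omega^n)''$ is exactly that of $a_{k+1}-a_{i+1}$ throughout $V$, which gives the three announced cases (with $\beta=1$ recovering the affine case).

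Finally I would dispose of the degenerate subcases by repeating this scheme with the appropriate branch: if $a_{i+1}=0$ the power law is replaced by an exponential in $X_0$, if $a_{k+1}=0$ by an affine-plus-logarithm expression, and if both vanish $\varphi_\omega^n$ is affine; in each situation monotonicity again pins down the sign of the leading constant and one checks that the sign of $(\varphi_\omega^n)''$ equals that of $a_{k+1}-a_{i+1}$ (with the convention that a vanishing coefficient contributes $0$). The main obstacle is not any single computation but this bookkeeping of signs across the four cases; routing it through the strict monotonicity of $\varphi_\omega^n$ is what keeps it clean and avoids having to determine the sign of $A$ by hand.
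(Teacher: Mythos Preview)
Your proposal is correct and follows essentially the same strategy as the paper: use Proposition~\ref{PropSolXn} and Lemma~\ref{PropLambdak} to write $\varphi_\omega^n$ as a power of $h_n(X_0)$ on a neighbourhood of $X_0$, then read off the convexity from the sign of the second derivative, treating the degenerate cases $a_{i+1}=0$ or $a_{k+1}=0$ separately. The one noteworthy difference is in how you pin down the sign of the leading coefficient: the paper computes it explicitly as $h_n(c_k)M_{i,k}^{a_{k+1}}$ with $M_{i,k}>0$ an explicit product of positive terms, so the $(a_{k+1}-a_{i+1})$ factor appears directly; you instead argue that $(\varphi_\omega^n)'>0$ by Cauchy--Lipschitz, hence $A\beta a_{i+1}>0$, and then factor $(\varphi_\omega^n)''=(A\beta a_{i+1})\cdot(a_{k+1}-a_{i+1})\cdot u^{\beta-2}$. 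Your route is slightly slicker in that it avoids writing out $M_{i,k}$ and transfers cleanly to the degenerate cases, but the underlying argument is the same.
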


\begin{proof}
To prove Lemma \ref{LemmeConvexité}, it suffices to study the sign of the second derivative of $\varphi_\omega^n$.  Depending on whether $a_{i+1}$ and $a_{k+1}$ are null are not, $\varphi_\omega^n$ can be defined by four different expressions. Here we only provide a proof for the case $a_{i+1}\neq 0$ and $a_{k+1}\neq 0$. Other cases are analogous. 

Let $X_0\in (c_i,c_{i+1})$ such that $\varphi_\omega^n(X_0)\in (c_k,c_{k+1})$, $k>i$. Then, according to equations (\ref{ExpressionLambdak}) and (\ref{XnCk}) \begin{equation}
\begin{array}{l l}
\varphi_\omega^n(X_0)&=\left( c_k-\frac{b_{k+1}}{a_{k+1}}\right)e^{a_{k+1}(\omega-\lambda_k(X_0))}-\frac{b_{k+1}}{a_{k+1}},\vspace{0.3cm} \\ 
&=\frac{h_n(c_k)}{a_{k+1}}e^{a_{k+1}\omega}\prod_{p=i+1}^{k-1} e^{-a_{k+1}\lambda_{p+1}(c_p)}e^{-a_{k+1}\lambda_{i+1}(X_0)} \vspace{0.3cm} \\ 
&=\frac{h_n(c_k)}{a_{k+1}}h_n(X_0)^\frac{a_{k+1}}{a_{i+1}}{M_{i,k}}^{a_{k+1}}-\frac{b_{k+1}}{a_{k+1}},
\end{array}\label{VarphiAvecM}
\end{equation}
where \begin{equation} M_{i,k}=e^{\omega}\left( \frac{1}{h_n(c_{i+1})}\right)^\frac{1}{a_{i+1}}\prod_{p=i+1}^{k-1}\left(\frac{h_n(c_{p})}{h_n(c_{p+1})}\right)^\frac{1}{a_{p+1}}>0.\nonumber\end{equation}
Consequently \begin{equation}
\begin{array}{l l}
\displaystyle{\frac{\mathrm{d}^2\varphi_\omega^{n} }{\mathrm{d}X_0^2}}(X_0)&=\frac{h_n(c_k){M_{i,k}}^{a_{k+1}}}{a_{k+1}}\left(\frac{a_{k+1}}{a_{i+1}}\right)\left(\frac{a_{k+1}}{a_{i+1}}-1\right)a_{i+1}^2\left(a_{i+1}X_0+b_{i+1}\right)^{\left(\frac{a_{k+1}}{a_{i+1}}-2\right)}\\
&= h_n(c_k){M_{i,k}}^{a_{k+1}}(a_{k+1}-a_{i+1})\left(a_{i+1}X_0+b_{i+1}\right)^{\left(\frac{a_{k+1}}{a_{i+1}}-2\right)}.
\end{array}\nonumber
\end{equation}
Moreover, on any neighbourhood of $X_0$ included in $(a,b)\backslash J_n$, $\varphi_\omega^n$ is linear and then its second derivative is constant. 
Finally, the convexity of $\varphi_\omega^n$ is determined by the sign of its second derivative, given by the sign of $(a_{k+1}-a_{i+1})$, as stated in Lemma \ref{LemmeConvexité}.  
\end{proof}
Thanks to Lemma \ref{LemmeConvexité}, we can determine the convexity of $\varphi_{\omega}^n$ on each interval included in $(a,b)\backslash J_n$. To extend this result to the whole interval $(a,b)$ we need to introduce the following lemma.
\begin{lemma}
$\varphi_{\omega}^n$ is continuously differentiable on $(a,b)$.
\label{LemmeRegularitePhi}
\end{lemma}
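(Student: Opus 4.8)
The plan is to exploit the fact that System (\ref{System_fn}) is a one-dimensional autonomous equation whose right-hand side $h_n$ is continuous and strictly positive on $[a,b)$, which lets me write the flow as a composition of $\mathcal{C}^1$ maps by separation of variables. This circumvents the difficulty that $h_n$ is only piecewise linear, hence not differentiable at the breakpoints $c_i$, so that the classical theorem on smooth dependence of solutions on the initial condition does not apply directly to (\ref{System_fn}).

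First I would record that, since $h_n>0$ on $[c_0,c_n)$ and the solution $X_n$ of (\ref{System_fn}) is increasing and remains in $[a,b)=[c_0,c_n)$, separating variables in $X_n'=h_n(X_n)$ and integrating from $0$ to $\omega$ yields the implicit relation $\int_{X_0}^{\varphi_\omega^n(X_0)} \frac{\mathrm{d}y}{h_n(y)} = \omega$ for every $X_0\in[a,b)$. I then introduce the primitive $G(x)=\int_{c_0}^{x}\frac{\mathrm{d}y}{h_n(y)}$, well defined for $x\in[c_0,c_n)$. Because $1/h_n$ is continuous and strictly positive there, $G$ is of class $\mathcal{C}^1$ with $G'(x)=1/h_n(x)>0$; hence $G$ is a strictly increasing $\mathcal{C}^1$ bijection from $[c_0,c_n)$ onto its image, an interval $[0,\ell)$ with $\ell\in(0,+\infty]$, and its inverse $G^{-1}$ is of class $\mathcal{C}^1$ on $[0,\ell)$.

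Next I would rewrite the implicit relation as $G(\varphi_\omega^n(X_0))=G(X_0)+\omega$, which gives the closed form $\varphi_\omega^n(X_0)=G^{-1}\bigl(G(X_0)+\omega\bigr)$ for all $X_0\in(a,b)$. Since the solution remains in $[a,b)$, one has $\varphi_\omega^n(X_0)\in[c_0,c_n)$, so $G(X_0)+\omega=G(\varphi_\omega^n(X_0))$ lies in $[0,\ell)$, the domain of the $\mathcal{C}^1$ map $G^{-1}$. As a composition of the $\mathcal{C}^1$ maps $X_0\mapsto G(X_0)+\omega$ and $G^{-1}$, the flow $\varphi_\omega^n$ is therefore of class $\mathcal{C}^1$ on $(a,b)$, which is the claim. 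Differentiating this identity moreover recovers the explicit derivative $(\varphi_\omega^n)'(X_0)=h_n(\varphi_\omega^n(X_0))/h_n(X_0)$, a ratio of two continuous, nonvanishing quantities, confirming continuity of the derivative.

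The only delicate point is the behaviour at the finitely many initial conditions in $J_n$, where $X_0$ or $\varphi_\omega^n(X_0)$ coincides with a breakpoint $c_k$ and $\varphi_\omega^n$ is a priori described by different formulas on the two sides; I expect this to be the main obstacle. The integral argument above disposes of it uniformly, since $G$ and $G^{-1}$ are globally $\mathcal{C}^1$, so no case distinction at the $c_k$ is needed. Alternatively, one could argue locally by checking directly that the left and right one-sided derivatives obtained from Proposition \ref{PropSolXn} agree at each point of $J_n$; their common value is again $h_n(\varphi_\omega^n(X_0))/h_n(X_0)$, which shows both that the derivative exists there and that it varies continuously, so that Lemma \ref{LemmeConvexité} extends to all of $(a,b)$.
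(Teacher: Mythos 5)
Your proof is correct, and it takes a genuinely different route from the paper. The paper works directly with the explicit piecewise exponential formulas from Proposition \ref{PropSolXn}: it computes the one-sided derivatives of $\varphi_\omega^n$ on either side of a point of $J_n$, and uses the continuity identity $h_n(X_0)^{1/a_{i+1}}M_{i,k}=1$ to show that both one-sided limits equal $h_n(c_k)/h_n(X_0)$ (and it only writes out one of the several sign cases for $a_i,a_{i+1},a_k,a_{k+1}$). You instead observe that for a scalar autonomous equation with continuous, positive right-hand side the flow is globally the composition $G^{-1}\circ(G+\omega)$ with $G(x)=\int_{c_0}^{x}\mathrm{d}y/h_n(y)$; since $G'=1/h_n$ is continuous and positive, $G$ and $G^{-1}$ are $\mathcal{C}^1$ and the case distinction at the breakpoints disappears entirely. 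Your argument is shorter, needs only continuity and positivity of $h_n$ rather than its piecewise linear structure, and delivers the same derivative $(\varphi_\omega^n)'(X_0)=h_n(\varphi_\omega^n(X_0))/h_n(X_0)$ that the paper obtains by computation; what the paper's heavier route buys is the explicit closed-form expressions (\ref{VarphiAvecM}) that it reuses in Lemma \ref{LemmeConvexité} for the convexity analysis, so the computation there is not wasted even though it is overkill for the regularity statement alone. One point worth making explicit in your write-up: positivity of $h_n$ on all of $[c_0,c_n)$ (not merely at the nodes) is what guarantees $1/h_n$ is continuous there, and this holds by the standing assumption (\ref{Hyp_hn}) together with piecewise linearity; near $b$ the primitive $G$ may diverge, but you correctly allow $\ell=+\infty$.
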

\begin{proof}
We know that $\varphi_\omega^n$ is $\mathcal{C}^\infty$ on $(a,b)\backslash J_n$. Let $X_0\in J_n$ and $i,k\in \{0,\dots,n-1\}$ such that $X_0=c_i$ or $\varphi_\omega^n(X_0)=c_k$. We want to show that the derivative of $\varphi_\omega^n$ is continuous in $X_0$. For the sake of simplicity, we only show the proof for the case where $a_i,\ a_{i+1},a_k$ and $\ a_{k+1}$ are non null, $X_0=c_i$ and $\varphi_\omega^n(X_0)=c_k$. Other cases are analogous.

Let $X_1\in (c_{i-1},c_i)$ such that $\varphi_\omega^n(X_1)\in (c_{k-1},c_k)$ and $X_2\in (c_{i},c_{i+1})$ such that $\varphi_\omega^n(X_2)\in (c_{k},c_{k+1})$

According to (\ref{VarphiAvecM}),
\begin{equation} \varphi_\omega^n(X_2)=\frac{h_n(c_k)}{a_{k+1}}h_n(X_2)^\frac{a_{k+1}}{a_{i+1}}{M_{i,k}}^{a_{k+1}}-\frac{b_{k+1}}{a_{k+1}}\nonumber \end{equation}
and straightforward calculations lead to 
\begin{equation}
{\renewcommand{\arraystretch}{1.6}
\begin{array}{l l}
\varphi_\omega^n(X_1)&=\frac{h_n(c_{k-1})}{a_{k}}h_n(X_1)^\frac{a_{k}}{a_{i}}{M_{i-1,k-1}}^{a_{k}}-\frac{b_{k}}{a_{k}}\\ &= \frac{h_n(c_{k})}{a_{k}}h_n(X_1)^\frac{a_{k}}{a_{i}} {M_{i,k}}^{a_{k}}\left(h_n(X_0)\right)^{\frac{a_k}{a_{i+1}}-\frac{a_k}{a_i}}-\frac{b_{k}}{a_{k}}.
\end{array}}\nonumber
\end{equation}
Consequently 
\begin{equation} {(\varphi_\omega^n})'(X_2)=h_n(c_k)h_n(X_2)^{\frac{a_{k+1}}{a_{i+1}}-1}{M_{i,k}}^{a_{k+1}}\label{Phi'2} \end{equation}
and 
\begin{equation} {(\varphi_\omega^n})'(X_1)= h_n(c_{k})h_n(X_1)^{\frac{a_{k}}{a_{i}}-1} {M_{i,k}}^{a_{k}}\left(h_n(X_0)\right)^{\frac{a_k}{a_{i+1}}-\frac{a_k}{a_i}}.\label{Phi'1} \end{equation}
On the other hand, since $\varphi_\omega^n$ is continuous and $\varphi_\omega^n(X_0)=c_k$, it follows that 
\begin{equation}
\begin{array}{l l}
c_k&=\varphi_\omega^n(X_0)=\underset{X_2\to X_0}{\lim }\varphi_\omega^n(X_2)=\frac{h_n(c_k)}{a_{k+1}}h_n(X_0)^\frac{a_{k+1}}{a_{i+1}}{M_{i,k}}^{a_{k+1}}-\frac{b_{k+1}}{a_{k+1}}.
\end{array} \nonumber \end{equation}
That is 
\begin{equation}
\begin{array}{l l}
a_{k+1}c_k+b_{k+1}=h_n(c_k) \left( h_n(X_0)^\frac{1}{a_{i+1}}{M_{i,k}}\right)^{a_{k+1}}.
\end{array} \label{EgaliteMik} \end{equation}
According to (\ref{Hyp_hn}), $h_n(c_k)=a_{k+1}c_k+b_{k+1}\neq 0$. It follows that $$h_n(X_0)^\frac{1}{a_{i+1}}M_{i,k}=1.$$

Applying (\ref{EgaliteMik}) to (\ref{Phi'2}) and (\ref{Phi'1}), we get 
\begin{equation} \begin{array}{l l} \underset{X_2\to X_0^+}{\lim }{\varphi_\omega^n}'(X_2)=\frac{h_n(c_k)}{h_n(X_0)}\left(h_n(X_0)^{\frac{1}{a_{i+1}}}{M_{i,k}}\right)^{a_{k+1}}=\frac{h_n(c_k)}{h_n(X_0)},\\ 
\underset{X_1\to X_0^-}{\lim }{\varphi_\omega^n}'(X_1)=\frac{h_n(c_k)}{h_n(X_0)}\left( h_n(X_0)^{\frac{1}{a_{i+1}}} {M_{i,k}}\right)^{a_{k}}\left(h_n(X_0)\right)^{\frac{a_k}{a_{i}}-\frac{a_k}{a_i}}= \frac{h_n(c_k)}{h_n(X_0)},
\end{array}\nonumber\end{equation}
and the proof is achieved.  \end{proof}

Consequently, the convexity of $\varphi_{\omega}^n$ on $(a,b)$ can be entirely determined, as stated in Proposition \ref{PropConvexitéPhin} hereafter.

Let us note, for all $n\in \mathbb{N}^*$, $h_n^*$ and $\sideset{^{*}}{_n}{\mathop{h}}$ the right and left derivatives of $h_n$ respectively. Then, $h_n^*$ and $\sideset{^{*}}{_n}{\mathop{h}}$ are defined everywhere in $(a,b)$ and given by \begin{equation} \left \{ \begin{array}{l l}
 \sideset{^{*}}{_n}{\mathop{h}}(x)= h_n^*(x)=h_n'(x)=a_{i+1}, &\text{ if } x\in (c_{i},c_{i+1}),\\ \sideset{^{*}}{_n}{\mathop{h}}(x)=a_{i},\   h_n^*(x)=a_{i+1}, &\text{ if } x=c_i.
\end{array} \right. \label{Valeurshn'} \end{equation}
\begin{proposition}
Let $X_0\in (a,b)$. Then $\varphi_{\omega}^n$ is strictly convex (resp. strictly concave, resp. affine) on a neighbourhood of $X_0$ if and only if $h_n^*(X_0)-h_n^*(\varphi_{\omega}^n(X_0))<0$ (resp. $>0$, resp. $=0$) and $\sideset{^{*}}{_n}{\mathop{h}}(X_0)-\sideset{^{*}}{_n}{\mathop{h}}(\varphi_{\omega}^n(X_0))<0$ (resp. $>0$, resp. $=0$).
\label{PropConvexitéPhin}
\end{proposition}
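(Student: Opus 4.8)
The plan is to build on Lemma \ref{LemmeConvexité}, which already settles every point $X_0\in(a,b)\setminus J_n$, and to extend it across the finitely many exceptional points of $J_n$ while reconciling the two one-sided conditions of the statement with the single slope comparison of that lemma.

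First I would treat the generic case $X_0\notin J_n$. Then $X_0$ lies in an open interval $(c_i,c_{i+1})$ and $\varphi_\omega^n(X_0)$ in an open interval $(c_k,c_{k+1})$, so by (\ref{Valeurshn'}) the left and right derivatives of $h_n$ agree at both points: $h_n^*(X_0)=\sideset{^{*}}{_n}{\mathop{h}}(X_0)=a_{i+1}$ and $h_n^*(\varphi_\omega^n(X_0))=\sideset{^{*}}{_n}{\mathop{h}}(\varphi_\omega^n(X_0))=a_{k+1}$. Both conditions in the proposition then collapse to the single inequality $a_{i+1}-a_{k+1}<0$ (resp. $>0$, resp. $=0$), which is exactly the criterion $a_{k+1}>a_{i+1}$ (resp. $<$, $=$) of Lemma \ref{LemmeConvexité}.

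The substantive case is $X_0\in J_n$. Since $\varphi_\omega^n$ is a strictly increasing bijection, $J_n$ is finite, so for $\delta>0$ small the punctured neighbourhood $(X_0-\delta,X_0)\cup(X_0,X_0+\delta)$ avoids $J_n$ entirely. On the right sub-interval, every $X>X_0$ sits in the interval whose slope equals the right derivative $h_n^*(X_0)$, and by monotonicity $\varphi_\omega^n(X)$ sits in the interval whose slope equals $h_n^*(\varphi_\omega^n(X_0))$; Lemma \ref{LemmeConvexité} then makes $\varphi_\omega^n$ strictly convex (resp. concave, affine) on this piece precisely when $h_n^*(X_0)-h_n^*(\varphi_\omega^n(X_0))<0$ (resp. $>0$, $=0$). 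Symmetrically, the left sub-interval is governed by the left derivatives $\sideset{^{*}}{_n}{\mathop{h}}(X_0)$ and $\sideset{^{*}}{_n}{\mathop{h}}(\varphi_\omega^n(X_0))$, which yields the second condition.

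The crux is the gluing step, and this is where I expect the only real difficulty: convexity on a neighbourhood is not in general implied by convexity on two abutting sub-intervals. What saves the argument is that $\varphi_\omega^n$ is continuously differentiable on all of $(a,b)$ by Lemma \ref{LemmeRegularitePhi}, so $(\varphi_\omega^n)'$ is continuous at $X_0$. If both one-sided conditions hold with the convex sign, then $(\varphi_\omega^n)'$ is strictly increasing on each of the two sub-intervals and continuous across $X_0$, hence strictly increasing on the whole neighbourhood, so $\varphi_\omega^n$ is strictly convex there; the concave and affine cases are identical. Conversely, strict convexity on a neighbourhood forces strict convexity on each side and hence both one-sided conditions. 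I would close by remarking that in any mixed case (one side convex, the other concave) none of the three sign patterns holds, which matches the biconditional form of the statement and confirms that $\varphi_\omega^n$ is then neither convex nor concave nor affine near $X_0$.
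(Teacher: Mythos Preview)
Your proposal is correct and follows essentially the same approach as the paper: use Lemma~\ref{LemmeConvexité} to determine convexity on each component of $(a,b)\setminus J_n$, and then invoke the $\mathcal{C}^1$-regularity of Lemma~\ref{LemmeRegularitePhi} to glue the pieces across the finitely many points of $J_n$. The paper's own argument is in fact a single sentence declaring the result an immediate consequence of these two lemmas; your write-up spells out the gluing step (monotonicity of $(\varphi_\omega^n)'$ on each side plus continuity at $X_0$) more carefully than the paper does, but the route is the same.
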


Proposition \ref{PropConvexitéPhin} is an immediate consequence of Lemmas \ref{LemmeConvexité} and \ref{LemmeRegularitePhi}. Indeed, Lemma \ref{LemmeConvexité} provides necessary and sufficient conditions for $\varphi_\omega^n$ to be strictly convex (resp. concave) on any interval included in $(a,b)\backslash J_n$, with $J_n$ defined by (\ref{defJn}), and Lemma \ref{LemmeRegularitePhi} ensures that $\varphi_\omega^n$ in strictly convex (resp. concave) on a neighbourhood of $X_0\in J_n$ if and only if it is strictly convex (resp. concave) on both sides of $X_0$.

\subsection{Generalisation to smooth functions} \label{Sectionh}
In this section, we consider the autonomous Cauchy problem \begin{equation}
\left \{ \begin{array}{r c l} 
X'(t)&=&h(X(t)),\ t\geq 0,\\
X(0)&=&X_0\in [a,b],
\end{array}\right. \label{System_h}
\end{equation}
where $h\in \mathcal{C}^1([a,b],\mathbb{R})$ is a Lipschitz continuous function, positive on $(a,b)$ and satisfying $h(b)=0$. 

We want to study the convexity of  $\varphi_\omega$, the flow associated to System (\ref{System_h}) at time $\omega>0$, defined by (\ref{defVarphi}). Remark that the hypothesis $h(b)=0$ ensures that for any $X_0\in [a,b]$, $X$ and $\varphi_ \omega$ remain in $[a,b]$.

The main purpose of this section is to prove the following theorem.
\begin{theorem}
Let $X_0\in (a,b)$. If $h'(\varphi_\omega(X_0))-h'(X_0)>0$ (resp. $<0$), then $\varphi_\omega$ is convex (resp. concave) on a neighbourhood of $X_0$. \label{TheoremConvexite}
\end{theorem}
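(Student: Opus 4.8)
The plan is to obtain a closed form for the first derivative $\varphi_\omega'$ and then differentiate it once more, so that the sign of $\varphi_\omega''(X_0)$ can be read off directly. First I would recall that, since $h\in\mathcal{C}^1([a,b])$, the flow $X(t;X_0)$ of (\ref{System_h}) is $\mathcal{C}^1$ jointly in $(t,X_0)$, so that $u(t):=\partial X(t;X_0)/\partial X_0$ exists, is continuous, and solves the linearised equation $u'(t)=h'(X(t))u(t)$ with $u(0)=1$. The key observation is that $t\mapsto h(X(t))$ solves the \emph{same} scalar linear equation: indeed $\tfrac{\mathrm{d}}{\mathrm{d}t}h(X(t))=h'(X(t))\,h(X(t))$, with initial value $h(X_0)$. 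Since $X_0\in(a,b)$ forces $X(t;X_0)\in(X_0,b)\subset(a,b)$ for $t>0$, where $h>0$, the quantity $h(X(t))$ never vanishes, and comparing the two solutions of this linear equation yields
\[
\varphi_\omega'(X_0)=u(\omega)=\frac{h(\varphi_\omega(X_0))}{h(X_0)}.
\]

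Next I would exploit this formula twice. Because $h\in\mathcal{C}^1$ and $\varphi_\omega\in\mathcal{C}^1$, the right-hand side is itself a $\mathcal{C}^1$ function of $X_0$ (a quotient of $\mathcal{C}^1$ functions with non-vanishing denominator), so $\varphi_\omega$ is in fact $\mathcal{C}^2$ on $(a,b)$ even though $h$ is only $\mathcal{C}^1$. Differentiating the expression for $\varphi_\omega'$ and substituting $\varphi_\omega'(X_0)=h(\varphi_\omega(X_0))/h(X_0)$ back in, the terms reorganise into
\[
\varphi_\omega''(X_0)=\frac{h(\varphi_\omega(X_0))}{h(X_0)^2}\bigl(h'(\varphi_\omega(X_0))-h'(X_0)\bigr).
\]

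Finally, since $h(\varphi_\omega(X_0))>0$ and $h(X_0)^2>0$, the sign of $\varphi_\omega''(X_0)$ coincides with the sign of $h'(\varphi_\omega(X_0))-h'(X_0)$. Thus the hypothesis $h'(\varphi_\omega(X_0))-h'(X_0)>0$ gives $\varphi_\omega''(X_0)>0$, and by continuity of $\varphi_\omega''$ the second derivative remains positive on a neighbourhood of $X_0$, so $\varphi_\omega$ is convex there; the concave case is symmetric. The step I expect to carry the real content is the derivation of the closed form $\varphi_\omega'=h\circ\varphi_\omega/h$, since it is exactly what lets the argument go through with $h$ only $\mathcal{C}^1$: the naive route through a second variational equation would require $h''$. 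This closed form is the continuous counterpart of the piecewise-linear slope comparison behind Proposition \ref{PropConvexitéPhin} (there, convexity amounts to the slope at the image exceeding the slope at the source, $h_n^*(\varphi_\omega^n(X_0))>h_n^*(X_0)$, which is the discrete analogue of $h'(\varphi_\omega(X_0))>h'(X_0)$). One could alternatively reach Theorem \ref{TheoremConvexite} by approximating $h$ uniformly by piecewise linear functions $h_n$, applying Proposition \ref{PropConvexitéPhin} to each $\varphi_\omega^n$, and passing to the limit using that convexity is preserved under pointwise limits; but the direct computation above is shorter and self-contained.
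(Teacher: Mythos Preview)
Your proof is correct and takes a genuinely different route from the paper. The paper proceeds by approximation: it builds piecewise linear interpolants $h_n$ of $h$, establishes strict convexity of the corresponding flows $\varphi_\omega^n$ via explicit formulas (Proposition~\ref{PropConvexitéPhin}), proves $\varphi_\omega^n\to\varphi_\omega$ pointwise (Lemma~\ref{ConvergenceFlot}), and then passes to the limit, using that a pointwise limit of convex functions is convex. Your argument bypasses all of this machinery with the identity $\varphi_\omega'(X_0)=h(\varphi_\omega(X_0))/h(X_0)$, obtained by recognising that $t\mapsto h(X(t))$ and the variational derivative $u(t)$ solve the same linear ODE; one further differentiation gives the closed form for $\varphi_\omega''$ and the result follows immediately.

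Your approach is not only shorter and self-contained, it is also strictly stronger: you obtain $\varphi_\omega''(X_0)>0$ and hence \emph{strict} convexity on a neighbourhood of $X_0$, whereas the paper's limiting argument only yields (non-strict) convexity, since strict convexity is not preserved under pointwise limits. The paper explicitly flags this gap in its Discussion, conjecturing that strict convexity should hold under the hypotheses of Theorem~\ref{TheoremConvexite}; your computation settles that conjecture. The trade-off is that the paper's approximation framework makes Section~\ref{Sectionhn} an essential ingredient rather than a warm-up, and in principle extends to reaction terms $h$ that are merely Lipschitz with one-sided derivatives, while your argument uses $h\in\mathcal{C}^1$ in an essential way to make the variational equation and the quotient rule available.
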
 
\begin{remark} Theorem~\ref{TheoremConvexite} can be directly adapted to the case where $h$ is a negative function and $h(a)=0$ by studying the function $g$ defined on $(-b,-a)$ by $g(x)=-h(-x)$. \label{Cashnegative}\end{remark}

In the following, we consider $h_n$ defined in Section \ref{Sectionhn} to be the order $n$ linear interpolation of $h$ on $[a,b]$ with uniform distribution of the nodes. That is, with the notations of Section \ref{Sectionhn},\begin{equation}
{\renewcommand{\arraystretch}{1}
\left\{ \begin{array}{l l}
 \forall i \in \{ 0,\dots, n \}, & c_i=a+i\frac{b-a}{n},\\  \forall i \in \{ 0,\dots, n-1 \},& a_{i+1}=\frac{h(c_{i+1})-h(c_i)}{c_{i+1}-c_i},\ b_{i+1}=h(c_i)-a_{i+1}c_i.
\end{array}\right. } \label{Valeurs_ai_bi_ci}
\end{equation} 
In particular, $h_n$ is continuous,
 for all $i \in \{ 0,\dots , n \},\ c_{i+1}-c_i=1/n$ and $h_n(c_i)=h(c_i)$.    When there is ambiguity on the order $n$, we note $a_i=a^{(n)}_{i}$, $b_i=b_{i}^{(n)}$ and $c_i=c_{i}^{(n)}$. The flow $\varphi_\omega^n$ associated with $h_n$ is defined in (\ref{Def_phin}).

\begin{lemma}
Let $X_0\in[a,b]$, then $\lim_{n\to +\infty}\varphi_{\omega}^n(X_0)=\varphi_\omega(X_0)$.\label{ConvergenceFlot}
\end{lemma}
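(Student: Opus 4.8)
The plan is to control the difference between the two flows by a Gronwall estimate, reducing everything to two uniform facts about the interpolants $h_n$: they are Lipschitz with a constant independent of $n$, and they converge to $h$ uniformly on $[a,b]$. Denote by $X$ the solution of (\ref{System_h}) and by $X_n$ the solution of (\ref{System_fn}), so that $\varphi_\omega(X_0)=X(\omega)$ and $\varphi_\omega^n(X_0)=X_n(\omega)$; both solutions exist globally and remain in $[a,b]$ thanks to the conditions $h(b)=0$ and $h_n(c_n)=0$.

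First I would fix a common Lipschitz constant. Since $h\in\mathcal{C}^1([a,b])$, its derivative is bounded, say $|h'|\le L$ on $[a,b]$, and $h$ itself is $L$-Lipschitz. By the mean value theorem each slope $a_{i+1}=(h(c_{i+1})-h(c_i))/(c_{i+1}-c_i)$ equals $h'(\xi_i)$ for some $\xi_i\in(c_i,c_{i+1})$, hence $|a_{i+1}|\le L$ for every $i$ and every $n$, so each $h_n$ is $L$-Lipschitz with the same constant $L$.

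Second I would establish the uniform convergence $h_n\to h$. For $x\in[c_i,c_{i+1}]$ one has $h_n(x)-h(x)=h'(\xi_i)(x-c_i)-\int_{c_i}^x h'(s)\,\mathrm{d}s=\int_{c_i}^x\bigl(h'(\xi_i)-h'(s)\bigr)\,\mathrm{d}s$, so that $|h_n(x)-h(x)|\le \frac{b-a}{n}\,\omega_{h'}\!\left(\frac{b-a}{n}\right)=:\varepsilon_n$, where $\omega_{h'}$ is the modulus of continuity of $h'$. As $h'$ is uniformly continuous on the compact interval $[a,b]$, we get $\varepsilon_n\to 0$, that is $\|h_n-h\|_\infty\to 0$.

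Finally I would run Gronwall's inequality. Writing $X_n(t)-X(t)=\int_0^t\bigl(h_n(X_n(s))-h(X(s))\bigr)\,\mathrm{d}s$ and splitting the integrand as $(h_n-h)(X_n(s))+\bigl(h(X_n(s))-h(X(s))\bigr)$, the first term is bounded by $\varepsilon_n$ and the second by $L\,|X_n(s)-X(s)|$, since both trajectories lie in $[a,b]$. Hence $|X_n(t)-X(t)|\le \varepsilon_n t+L\int_0^t|X_n(s)-X(s)|\,\mathrm{d}s$, and Gronwall's inequality yields $|X_n(t)-X(t)|\le \varepsilon_n\,\omega\,e^{L\omega}$ for all $t\in[0,\omega]$. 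Evaluating at $t=\omega$ gives $|\varphi_\omega^n(X_0)-\varphi_\omega(X_0)|\le \varepsilon_n\,\omega\,e^{L\omega}\to 0$, which proves the claim. The only non-routine point is the uniform control of the interpolant and of its slopes; once those are in place the Gronwall step is standard and in fact delivers convergence uniform in $X_0\in[a,b]$.
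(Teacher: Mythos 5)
Your proof is correct. It uses the same two core ingredients as the paper --- uniform convergence $\|h_n-h\|_\infty\to 0$ and a Gronwall estimate --- but assembles them differently. The paper does not compare $X_n$ and $X$ directly: it sandwiches $X_n$ between the solutions $X_{\pm\epsilon}$ of the perturbed autonomous equations $X'=h(X)\pm\epsilon$ (a scalar comparison principle), applies Gronwall to $X-X_{\pm\epsilon}$, and concludes by the squeeze theorem. You instead write $h_n(X_n)-h(X)=(h_n-h)(X_n)+\bigl(h(X_n)-h(X)\bigr)$ and run Gronwall directly on $|X_n-X|$, using only the Lipschitz constant of $h$ (your preliminary observation that the slopes $a_{i+1}$ are uniformly bounded is true but not actually needed for this step). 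Your route is arguably cleaner: it avoids the auxiliary perturbed flows and the comparison argument, and it delivers more --- an explicit rate $|\varphi_\omega^n(X_0)-\varphi_\omega(X_0)|\le \varepsilon_n\,\omega\,e^{L\omega}$ with $\varepsilon_n=\frac{b-a}{n}\,\omega_{h'}\!\left(\frac{b-a}{n}\right)$, uniform in $X_0\in[a,b]$, whereas the paper only obtains pointwise convergence. The paper's sandwich argument, on the other hand, is the more robust template when one only knows $h_n\to h$ uniformly without a Lipschitz bound on $h$ itself; here $h$ is assumed Lipschitz, so nothing is lost by your approach.
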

\begin{proof}[Proof of Lemma \ref{ConvergenceFlot}.]
Let $X_0\in [a,b]$ and $X_n$, $X$ be the respective solutions of (\ref{System_fn}) and (\ref{System_h}). It is easy to show that for any $\epsilon>0$, there exists $n_\epsilon\in \mathbb{N}^*$ such that for all $n\geq n_\epsilon$ and for all $x\in [a,b]$, $|h(x)-h_n(x)|<\epsilon$. Then for all $x\in [a,b]$, $h(x)-\epsilon\leq h_n(x)\leq  h(x)+\epsilon$. 

For any $\epsilon>0$, let us consider $X_\epsilon$ and $X_{-\epsilon}$ the respective solutions of  $X_\epsilon'=h(X_\epsilon)+\epsilon$ and $X_{-\epsilon}'=h(X_{-\epsilon})-\epsilon$ satisfying $X_\epsilon(0)=X_{-\epsilon}(0)=X_0$. Then, for all $t>0$, \begin{equation} X_n(t)\in [X_\epsilon(t), X_{-\epsilon}(t)].\label{EncadrementXn}
\end{equation}

 It remains to show $\lim_{\epsilon\to 0^+}X_\epsilon(\omega)=\lim_{\epsilon\to 0^+}X_{-\epsilon}(\omega)=X(\omega)$.
We set $Z=X-X_\epsilon$. Since $h$ is Lipschitz continuous, there exists $C_h>0$ such that, for all $t\geq 0$, $$|Z'(t)|=|h(X(t))-h(X_\epsilon(t))-\epsilon|\leq C_h|X(t)-X_\epsilon(t)|+\epsilon=|Z(t)|+\epsilon.$$
Using the Gronwall lemma, 
 one gets $$\forall t\geq 0,\ |Z(t)| \leq |Z(0)|e^{C_ht}+\frac{\epsilon}{C_h}(e^{C_ht}-1)=\frac{\epsilon}{C_h}(e^{C_ht}-1).$$
Consequently $\lim_{\epsilon\to 0^+}Z(\omega)=0$ and so $\lim_{\epsilon\to 0^+}X_\epsilon(\omega)=X(\omega)$. Similarly, we can show $\lim_{\epsilon\to 0^+}X_{-\epsilon}(\omega)=X(\omega)$. By using the squeeze theorem and (\ref{EncadrementXn}), it follows that $\lim_{\epsilon\to 0^+}X_n(\omega)=X(\omega)$. 
\end{proof}

Using Lemma \ref{ConvergenceFlot} and Proposition \ref{PropConvexitéPhin}, we prove Theorem \ref{TheoremConvexite}.
\begin{proof}[Proof of Theorem \ref{TheoremConvexite}.]
Here we prove the convexity case. Proof for the concavity one is similar.
Let $x \in (a,b)$ and $(x_n)_{n\in \mathbb{N}}$ be a sequence such that $\forall n\in \mathbb{N},\ x_n\in (a,b)$ and $\lim_{n\to +\infty } x_n =x$. We first show that  \begin{equation}
 \lim_{n\to +\infty}h_n^*(x_n)=\lim_{n \to \infty}\sideset{^{*}}{_n}{\mathop{h}}(x_n)=h'(x), \label{Limitehnxn}
 \end{equation}
 where $h_n^*$ and $\sideset{^{*}}{_n}{\mathop{h}}$ are given by (\ref{Valeurshn'}).

 For all $n\in \mathbb{N}^*$, there exists a unique $i \in \{ 0,\dots, n-1 \}$ such that $x_n\in [c_{i}^{(n)},c_{i+1}^{(n)})$, we note $i=i_n$. Since $\lim_{n\to +\infty } x_n =x$ and $\forall i\in \{ 0,\dots n-1\}$, $$\lim_{n\to +\infty } |c_{i+1}^{(n)}-c_i^{(n)}|=\lim_{n\to +\infty } 1/n =0,$$ it follows that \begin{equation}\lim_{n\to +\infty } c_{i_n-1}^{(n)}=\lim_{n\to +\infty } c_{i_n+1}^{(n)}=x. \label{Limitecn}\end{equation}

Using the mean value theorem and the definition of $a_i$ in (\ref{Valeurs_ai_bi_ci}), it follows that for all $n\in \mathbb{N}^*$, there exists $\alpha_n \in (c_{i_n-1}^{(n)},c_{i_n}^{(n)})$ and $\beta_n \in (c_{i_n}^{(n)},c_{i_n+1}^{(n)}) $ such that \begin{equation} a_{i_n}=h'(\alpha_n) \text{ and } a_{i_n+1}= h'(\beta_n).\label{TAF}\end{equation}
Since $h\in\mathcal{C}^1([a,b],\mathbb{R})$, using  (\ref{Limitecn}) and (\ref{TAF}), it follows that
\begin{equation}
\lim_{n\to +\infty}a_{i_n}=\lim_{n\to +\infty}a_{i_n+1}=h'(x). 
\label{limiteai}
\end{equation}

  On the other hand, according to (\ref{Valeurshn'}), for all $n\in \mathbb{N}^*$, \begin{equation}
 h_n^*(x_n), \sideset{^{*}}{_n}{\mathop{h}}(x_n)\in \{a_{i_n},a_{i_n+1} \}.
\label{encradrementhn'} \end{equation}
With (\ref{limiteai}) and (\ref{encradrementhn'}) we conclude that (\ref{Limitehnxn}) is true. In particular, 
 \begin{equation} \left\{ \begin{array}{l}
 \lim_{n\to +\infty}h_n^*(x)=\lim_{n \to \infty}\sideset{^{*}}{_n}{\mathop{h}}(x)=h'(x),\\
 \lim_{n\to +\infty}h_n^*(\varphi_{\omega}^n(x))=\lim_{n \to \infty}\sideset{^{*}}{_n}{\mathop{h}}(\varphi_{\omega}^n(x))=h'(\varphi(x)).
 \end{array}\right. \label{Convergence_hn'}
 \end{equation}

Let $X_0\in (a,b)$. If we assume $h'(\varphi_\omega(X_0))>h'(X_0)$, since $h'$ and $\varphi_\omega$ are continuous there exists a neighbourhood $I_{X_0}$ of $X_0$ such that, for all $y\in I_{X_0}$, $h'(\varphi_\omega(y))>h'(x)$. 

Moreover, (\ref{Convergence_hn'}) implies that there exists a neighbourhood $J_{X_0}\subset I_{X_0}$ of $X_0$ and $N_0\in \mathbb{N}^*$ such that for all $n\geq N_0$ and for all $y\in J_{X_0}$, \begin{equation}
\sideset{^{*}}{_n}{\mathop{h}}(y)<\sideset{^{*}}{_n}{\mathop{h}}(\varphi_{\omega}^n(y)) \text{ and } h_n^*(y)<h_n^*(\varphi_{\omega}^n(y)). \nonumber
 \end{equation}
According to Proposition \ref{PropConvexitéPhin}, for all $n\geq N_0$, $\varphi_{\omega}^n$ is strictly convex on $J_{X_0}$.

Finally, as the limit of a sequence of strictly convex functions (Lemma \ref{LemmeConvexité}), $\varphi_{\omega}$ is convex on $J_{X_0}$.  
 \end{proof}

In the following section, we apply results of Sections \ref{PropGénérales}, \ref{SectionSolPeriodiques} and \ref{sectionConvex} to a special case that motivated that study.

\section{Equation on Tbet with impulses}\label{ChapitreImpulseTbet}
\subsection{Introduction}\label{introTbet}
In the multiscale model presented in \citet{Prokopiou} and \citet{Gao2016}, protein Tbet drives the development of effector properties and cell death in the CD8 T-cell population. In this model, all effector cells develop the same phenotype and the differentiation into memory cells is not considered. 

We are interested in studying how unequal partitioning of molecular content at cell division can explain the emergence of two subpopulations of effector CD8 T-cells, with the first ones \--- characterised by a low level of Tbet \--- able to develop memory properties and the second ones  \--- characterised by a high level of Tbet  \--- destined to die during the contraction phase \citep{Kaech2012,Lazarevic2013}. We hypothesize  that when a CD8 T-cell divides, the intracellular content of the mother cell is not split in two equal parts but asymmetric division occurs and gives birth to two daughter cells with different molecular profiles \citep{Sennerstam88,Block1990} (here, different concentrations of Tbet). Each daughter cell is supposed to be two times smaller than the mother cell so that the mass is preserved.

  To model this phenomenon, we consider the IDE with fixed impulse times (\ref{ImpulsiveEquationFeedback}),  which allows us to describe the effect of division, occurring at discrete times $\tau_k$, $k\in \mathbb{N}^*$, on the concentration $X$ of protein Tbet in a single CD8 T-cell throughout several cell divisions,
\begin{equation}
{\renewcommand{\arraystretch}{2}
\left \{ 
\begin{array}{r l l}
\dfrac{\mathbf{d}X(t)}{\mathbf{d}t} =&\eta\dfrac{X(t)^n}{\theta^n+X(t)^n}-\delta X(t),&\ t\in \mathbb{R}^+\backslash\{ \tau_k, k\in \mathbb{N}^* \}, \\
X(\tau_k^+)=&(1+\alpha_k)X(\tau_k^-),&\ k\in \mathbb{N}^*, \\
X(0)=&X_0\in \mathbb{R}^+.
\end{array}
\right. \\ 
\label{ImpulsiveEquationFeedback}}
\end{equation}
 At any time $\tau_k,\ k\in\mathbb{N}^*$, a cell divides and transmits a fraction $(1+\alpha_k)$ of its concentration of Tbet to one daughter cell  (the other one, which is not modelled, receives a concentration $(1-\alpha_k)X(\tau_k^-)$, so that the mean concentration in the two daughter cells remains equal to $X(\tau_k^-)$) \citep{Prokopiou,Gao2016}.  
 

The dynamics of the concentration $X$ within a single cell between two consecutive divisions is modelled by

\begin{equation} 
\dfrac{\mathbf{d}X}{\mathbf{d}t} =f(X):=\eta\dfrac{X^n}{\theta^n+X^n}-\delta X, \label{EquationFeedback}  \\ 
\end{equation}
where $n,\ \eta,\ \theta$ and $\delta$ are positive constants.
 The first term on the right hand side of the equation accounts for a positive feedback loop between protein Tbet and its own gene \textit{Tbx21} \citep{Kanhere2012,Prokopiou,Gao2016}, and parameter $\delta$ accounts for protein decay as well as  dilution due to cell growth. 

A common approach to study an impulsive system such as (\ref{ImpulsiveEquationFeedback}) is to turn it into a differential equation without impulse and study the latter \citep{Yan1998}. Indeed, let us introduce the non-impulsive system (\ref{SystReduit}), obtained from (\ref{ImpulsiveEquationFeedback}),
\begin{equation}
{\renewcommand{\arraystretch}{2}
\left \{ 
\begin{array}{r l l}
\dfrac{\mathbf{d}\mathscr{X}(t)}{\mathbf{d}t}=& P(t)\eta\dfrac{\mathscr{X}(t)^n}{P(t)^n\theta^n+\mathscr{X}(t)^n}-\delta\mathscr{X}(t),& t\in \mathbb{R}^+\backslash\{ \tau_k, k\in \mathbb{N}^* \}, \\
\mathscr{X}(0)=&X_0,
\end{array}
\right. 
\label{SystReduit}}
\end{equation}
where $P$ is the discontinuous function defined by $P(t)=  {\prod}_{{0<\tau_k\leq t}}(1+\alpha_k)^{-1} $. 
 The following proposition is inspired by Theorem 1 from \citet{Yan1998}, the proof is analogous  and will be omitted.

\begin{proposition}
Let  $\mathbf{(H_1)}$ and $\mathbf{(H_2)}$ hold true. Then $X$ is solution of (\ref{ImpulsiveEquationFeedback}) if and only if $\mathscr{X}: t\mapsto P(t)X(t)$ is solution of (\ref{SystReduit}).\label{PropReduction}
\end{proposition}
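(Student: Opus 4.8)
The plan is to verify the equivalence by a direct substitution argument, checking that the impulsive structure of (\ref{ImpulsiveEquationFeedback}) is exactly reproduced by the continuous dynamics of (\ref{SystReduit}) once we multiply by the piecewise-constant factor $P(t)$. First I would set $\mathscr{X}(t)=P(t)X(t)$ and analyze the behaviour of $P$ on each interval of continuity. On any interval $(\tau_k,\tau_{k+1})$ (and on $[0,\tau_1)$) the function $P$ is constant, equal to $P_k:=\prod_{0<\tau_j\leq t}(1+\alpha_j)^{-1}$, so that $\mathscr{X}=P_k X$ there and $\mathscr{X}'(t)=P_k X'(t)$. The idea is then to substitute the equation $X'=\eta X^n/(\theta^n+X^n)-\delta X$ from (\ref{ImpulsiveEquationFeedback}) and to rewrite everything in terms of $\mathscr{X}=P_k X$, i.e.\ $X=\mathscr{X}/P_k=P(t)^{-1}\mathscr{X}$.

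The key computation is on the nonlinear Hill term. Writing $X=\mathscr{X}/P_k$ and multiplying $X'$ by $P_k$, I would obtain
\begin{equation}
\mathscr{X}'(t)=P_k\left(\eta\frac{(\mathscr{X}/P_k)^n}{\theta^n+(\mathscr{X}/P_k)^n}-\delta\frac{\mathscr{X}}{P_k}\right)=\eta\frac{P_k^{1-n}\mathscr{X}^n}{\theta^n+P_k^{-n}\mathscr{X}^n}-\delta\mathscr{X}. \nonumber
\end{equation}
Multiplying the numerator and denominator of the first fraction by $P_k^{n}$ gives
\begin{equation}
\mathscr{X}'(t)=\eta\frac{P_k\,\mathscr{X}^n}{P_k^{n}\theta^n+\mathscr{X}^n}-\delta\mathscr{X}, \nonumber
\end{equation}
which is exactly the right-hand side of (\ref{SystReduit}) since $P(t)=P_k$ on that interval. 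This shows that on each interval of continuity $X$ solves the Tbet equation if and only if $\mathscr{X}$ solves the reduced equation. Conversely the same algebra, read backwards, recovers the equation for $X$ from that for $\mathscr{X}$.

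It remains to reconcile the jump conditions with the fact that $\mathscr{X}$ is claimed to be a solution of a \emph{non-impulsive} equation, i.e.\ that $\mathscr{X}$ is continuous across each $\tau_k$. Here I would compute the one-sided limits: at $t=\tau_k$ the factor $P$ jumps from $P_{k-1}$ to $P_{k-1}(1+\alpha_k)^{-1}$, while by the impulse rule in (\ref{ImpulsiveEquationFeedback}) the state $X$ jumps from $X(\tau_k^-)$ to $X(\tau_k^+)=(1+\alpha_k)X(\tau_k^-)$. Therefore
\begin{equation}
\mathscr{X}(\tau_k^+)=P(\tau_k^+)X(\tau_k^+)=P_{k-1}(1+\alpha_k)^{-1}(1+\alpha_k)X(\tau_k^-)=P_{k-1}X(\tau_k^-)=\mathscr{X}(\tau_k^-), \nonumber
\end{equation}
so the jump in $X$ is exactly cancelled by the jump in $P$ and $\mathscr{X}$ is continuous at $\tau_k$, as required by Definition \ref{defSolImp} applied trivially (no impulse) to (\ref{SystReduit}). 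The main obstacle, though purely bookkeeping, is to handle these two matched discontinuities carefully and to be explicit about the convention for $P(\tau_k)$ (right-continuity, consistent with condition (ii) of Definition \ref{defSolImp}); once the algebraic identity on the Hill term and the cancellation of the jumps are both in place, the equivalence follows and, as the authors note, the argument is entirely parallel to Theorem 1 of \citet{Yan1998}.
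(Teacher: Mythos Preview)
Your proof is correct: the direct substitution on each interval of constancy of $P$ together with the cancellation of the jumps at $\tau_k$ is precisely the standard reduction argument. The paper itself omits the proof, simply noting it is analogous to Theorem~1 of \citet{Yan1998}, so your computation is exactly the one the authors defer to.
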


\begin{remark} Proposition \ref{PropReduction} is particularly useful under the strong assumption that function $P$ is periodic on $\mathbb{R}^+$  \citep{Yan2003,Yan2005,Li2005,Saker2007,Kou2009}. Indeed, in that case, the periodicity of $\mathscr{X}$ implies the periodicity of $X$ and then  existence and stability of periodic solutions for System (\ref{ImpulsiveEquationFeedback}) can be studied with the non-impulsive system (\ref{SystReduit}), generating smooth solutions. Some authors succeeded in obtaining results on the existence and stability of periodic solutions with weaker hypotheses \citep{Liu2007,Faria2016}, however for specific right-hand sides of the equation. 
As mentioned by \citet{Liu2007}, the hypothesis of periodicity of the function $P$ is too restrictive. Moreover it has no simple physical interpretation and is sometimes abusively used, leading to unrealistic conclusions.  
It would not be relevant for the biological process we are studying to consider that $P$ is periodic, therefore we do not present results obtained under this hypothesis. 
\end{remark}

We want to apply the results of Section    \ref{sectionConvex} to System (\ref{ImpulsiveEquationFeedback}). To this end, we first focus on the convexity of $f$, defined by (\ref{EquationFeedback}).
\begin{proposition}
Assume $n>1$ and $\Theta=\theta\sqrt[n]{\frac{n-1}{n+1}}$. Then  $f$ is strictly convex on $[0,\Theta)$, strictly concave on $(\Theta,+\infty)$ and has an inflexion point in $\Theta$.   \label{LemmeInflexion}
\end{proposition}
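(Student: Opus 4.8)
The plan is to analyze the convexity of $f(X) = \eta \frac{X^n}{\theta^n + X^n} - \delta X$ by computing its second derivative and determining its sign. Since the linear term $-\delta X$ has zero second derivative, it contributes nothing to $f''$, so the entire analysis reduces to studying the convexity of the Hill function $\phi(X) := \frac{X^n}{\theta^n + X^n}$. First I would compute $\phi'(X)$. Writing $\phi(X) = 1 - \frac{\theta^n}{\theta^n + X^n}$, differentiation gives $\phi'(X) = \frac{n\theta^n X^{n-1}}{(\theta^n + X^n)^2}$, which is manifestly positive for $X > 0$ (confirming $f$ is increasing where the feedback dominates, though that is not what we need here).

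The key step is to differentiate once more and factor the result cleanly. Applying the quotient rule to $\phi'$, the numerator of $\phi''$ will be a product of $X^{n-2}$, a positive constant $n\theta^n$, and a factor of the form $\bigl[(n-1)\theta^n - (n+1)X^n\bigr]$ (after the $(\theta^n+X^n)$ terms are collected and one power cancels against the square in the denominator). The denominator $(\theta^n + X^n)^3$ is strictly positive. Hence the sign of $f''(X) = \eta\,\phi''(X)$ on $(0,+\infty)$ is governed entirely by the sign of $(n-1)\theta^n - (n+1)X^n$. This expression is positive precisely when $X^n < \frac{n-1}{n+1}\theta^n$, i.e. when $X < \theta\sqrt[n]{\frac{n-1}{n+1}} = \Theta$, and negative when $X > \Theta$; note that $n>1$ is exactly the hypothesis guaranteeing $\frac{n-1}{n+1} > 0$ so that $\Theta$ is a well-defined positive real number.

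From this sign analysis the conclusion follows immediately: $f'' > 0$ on $(0,\Theta)$ gives strict convexity there, $f'' < 0$ on $(\Theta, +\infty)$ gives strict concavity, and since $f''$ changes sign while vanishing at $\Theta$, the point $X = \Theta$ is an inflexion point. I would also check the boundary behaviour at $X=0$: for $n>2$ the factor $X^{n-2}$ vanishes at the origin so $f''(0)=0$, for $n=2$ it is a nonzero constant, and for $1<n<2$ the derivative blows up, but in all cases $f''$ is strictly positive on the open interval $(0,\Theta)$, which suffices for strict convexity on $[0,\Theta)$ in the sense of the statement.

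The main obstacle is purely computational: organizing the differentiation of the quotient so that the bracket $(n-1)\theta^n - (n+1)X^n$ emerges cleanly rather than as an unfactored polynomial in $X^n$. The trick is to differentiate $\phi'(X) = n\theta^n X^{n-1}(\theta^n+X^n)^{-2}$ as a product, yielding $n\theta^n\bigl[(n-1)X^{n-2}(\theta^n+X^n)^{-2} - 2nX^{n-1}X^{n-1}(\theta^n+X^n)^{-3}\bigr]$, then factoring out $X^{n-2}(\theta^n+X^n)^{-3}$ and combining the two remaining terms over the common factor $(\theta^n+X^n)$; this collapses to $(n-1)(\theta^n+X^n) - 2nX^n = (n-1)\theta^n - (n+1)X^n$, which is the desired bracket. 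Everything else is routine.
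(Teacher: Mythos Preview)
Your proposal is correct and is exactly the straightforward computation the paper has in mind; the paper itself omits the proof entirely, stating only that it is immediate, and your sign analysis of $f''(X)=\eta\,n\theta^n X^{n-2}\dfrac{(n-1)\theta^n-(n+1)X^n}{(\theta^n+X^n)^3}$ is the natural way to carry it out.
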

The proof of Proposition \ref{LemmeInflexion} is straightforward.

Since $f$ is Lispschitz continuous on $\mathbb{R}^+$ and $f(0)=0$, System (\ref{ImpulsiveEquationFeedback}) is a particular case of System (\ref{ImpulsiveEquation}) where $g=f$ and $U=\mathbb{R}^+$. Consequently the results presented in Section \ref{PropGénérales} hold true for System (\ref{ImpulsiveEquationFeedback}).

Since we only focus on nonnegative solutions, we only consider nonnegative steady states of the autonomous equation (\ref{EquationFeedback}), without impulse. Their existence and stability are given by Proposition \ref{équilibreTbet}, whose proof is omitted.

\begin{proposition}Let $n>1$, then   \begin{description}
\item[(i)] If $\eta(n-1)^{\frac{n-1}{n}}<n\delta \theta$, the trivial solution $X\equiv 0$ is the unique steady state of (\ref{EquationFeedback}) and it is globally asymptotically stable (on $\mathbb{R}_+$).
\item[(ii)]  If $\eta(n-1)^{\frac{n-1}{n}}=n\delta \theta$, equation (\ref{EquationFeedback}) has exactly two steady states: $X\equiv 0$, which is locally asymptotically stable, and $X\equiv \theta\sqrt[n]{n-1}$ which is unstable (attractive on $\left[ \theta\sqrt[n]{n-1},+\infty\right)$ but repulsive on $\left( 0,\theta\sqrt[n]{n-1}\ \right)$).
\item[(iii)] If $\eta(n-1)^{\frac{n-1}{n}}>n\delta \theta$,  (\ref{EquationFeedback}) has exactly three steady states: $0<X_u<X_s$, such that $X\equiv 0$ is locally asymptotically stable, $X_u$ is unstable and $X_s$ is locally asymptotically stable (with basin of attraction $(X_u,+\infty)$). Moreover $0<X_u<\theta \sqrt[n]{n-1}<X_s<\eta/\delta$. 
\end{description}
 \label{équilibreTbet}\end{proposition}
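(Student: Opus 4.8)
Proposition \ref{équilibreTbet}: The plan is to analyze the steady states of the autonomous equation (\ref{EquationFeedback}) by studying the sign of $f$, which amounts to comparing the Hill-function production term with the linear decay term.

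\textbf{Setting up the steady-state equation.} First I would observe that $X\equiv 0$ is always a steady state since $f(0)=0$ (the numerator $X^n$ vanishes for $n>1$). For positive steady states, I would solve $f(X)=0$ with $X>0$, i.e. $\eta X^{n-1}/(\theta^n+X^n)=\delta$, which rearranges to the polynomial condition $\delta X^n-\eta X^{n-1}+\delta\theta^n=0$. Rather than solving this directly, I would introduce the auxiliary function $\psi(X):=\eta X^{n-1}/(\theta^n+X^n)$, so that positive steady states are exactly the solutions of $\psi(X)=\delta$. The key step is to study $\psi$: it satisfies $\psi(0)=0$ (for $n>1$), $\psi(X)\to 0$ as $X\to+\infty$, and a single critical point. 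Differentiating and setting $\psi'(X)=0$ gives, after simplification, $X^n=\theta^n(n-1)$, hence a unique maximizer at $X^*=\theta\sqrt[n]{n-1}$. Evaluating $\psi$ there yields the maximum value $\psi_{\max}=\eta(n-1)^{(n-1)/n}/(n\theta)$.

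\textbf{Counting steady states via the threshold.} With $\psi$ unimodal (increasing then decreasing) and tending to $0$ at both ends, the horizontal line at height $\delta$ intersects the graph of $\psi$ in zero, one, or two points according to whether $\delta>\psi_{\max}$, $\delta=\psi_{\max}$, or $\delta<\psi_{\max}$. Multiplying through by $n\theta$, these three cases are precisely $\eta(n-1)^{(n-1)/n}<n\delta\theta$, $=n\delta\theta$, and $>n\delta\theta$, matching the three cases of the statement. In case (iii) the two positive roots $X_u<X_s$ straddle the maximizer, giving $X_u<\theta\sqrt[n]{n-1}<X_s$; the bound $X_s<\eta/\delta$ follows since at a steady state $\delta X=\eta X^n/(\theta^n+X^n)<\eta$.

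\textbf{Stability via the sign of $f$.} For a scalar autonomous ODE, a steady state $\bar X$ is locally asymptotically stable when $f'(\bar X)<0$ and unstable when $f'(\bar X)>0$; more robustly, I would read stability directly off the sign of $f$ between consecutive equilibria (since $X'=f(X)$ means $X$ increases where $f>0$ and decreases where $f<0$). At $X=0$ one checks $f'(0)=0$ but $f(X)<0$ for small $X>0$ (the decay term dominates the $O(X^n)$ production), so $0$ is locally asymptotically stable in every case. In case (iii), $f>0$ on $(X_u,X_s)$ and $f<0$ on $(0,X_u)$ and on $(X_s,+\infty)$, yielding that $X_u$ is unstable (repulsive on both sides) and $X_s$ is stable with basin $(X_u,+\infty)$; in case (i) there are no positive roots and $f<0$ throughout $(0,+\infty)$, giving global asymptotic stability of $0$ on $\mathbb{R}_+$. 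The degenerate case (ii) is the tangency $\delta=\psi_{\max}$, where $f$ touches zero at $X=\theta\sqrt[n]{n-1}$ without changing sign, producing the semi-stable behaviour described (attractive from above, repulsive from below).

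\textbf{Main obstacle.} The only genuinely delicate point is the degenerate case (ii): since $f$ has a double root there, linearization gives $f'=0$ and is inconclusive, so I must argue semi-stability from the sign of $f$ on each side of the tangency rather than from the derivative. Everything else is routine monotonicity and sign analysis of the unimodal function $\psi$.
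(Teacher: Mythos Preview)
The paper omits the proof of this proposition entirely (it states ``whose proof is omitted''), so there is no approach to compare against. Your argument is the standard and correct one: reduce positive steady states to level sets of the unimodal function $\psi(X)=\eta X^{n-1}/(\theta^n+X^n)$, compute its maximum $\psi_{\max}=\eta(n-1)^{(n-1)/n}/(n\theta)$ at $X^*=\theta\sqrt[n]{n-1}$, and read off stability from the sign of $f$ between equilibria.

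One small slip: you write $f'(0)=0$, but in fact $f'(0)=-\delta<0$ (the Hill term contributes $\eta n X^{n-1}\theta^n/(\theta^n+X^n)^2$, which vanishes at $0$ for $n>1$, leaving only the $-\delta$ from the decay term). This actually makes the stability of the origin \emph{easier}, not harder: linearization already gives local asymptotic stability, and your sign argument is not needed there. The genuinely delicate point you identify---the semi-stable equilibrium in case (ii), where $f$ has a double root and linearization is inconclusive---is handled correctly by your sign analysis on each side of the tangency.
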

 The three statements of Proposition \ref{équilibreTbet} are illustrated on Figure \ref{PortraitPhaseTbFB}.
%
%
\begin{figure}[t]
\begin{center}
\includegraphics[scale=0.68,  angle =0 ]{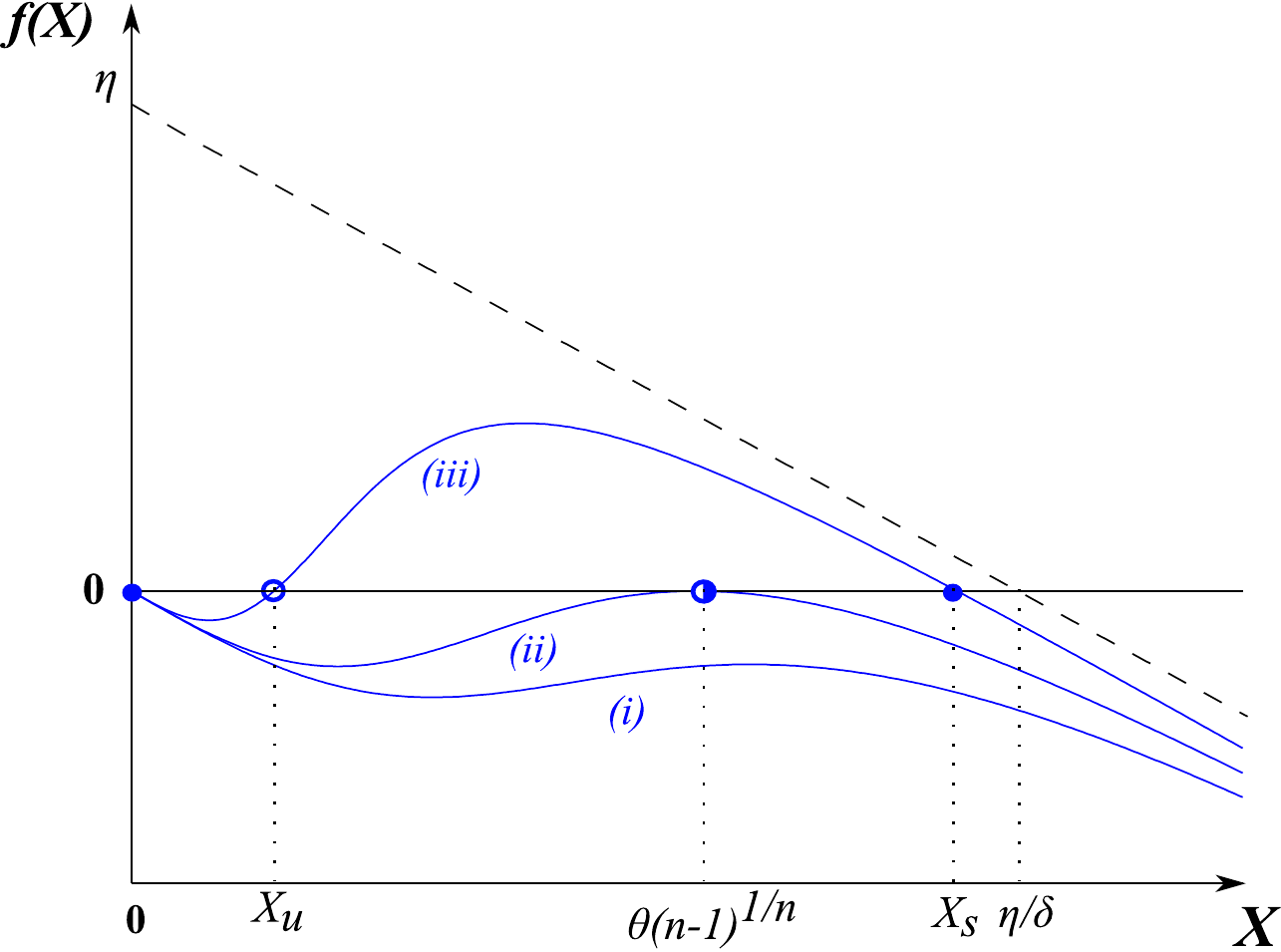}
	\caption{Steady states of  (\ref{EquationFeedback}). Straight blue curves correspond to the different cases given in Proposition \ref{équilibreTbet}. Dotted line is the graph of the function $X \mapsto\eta-\delta X$.}
	\label{PortraitPhaseTbFB}
\end{center}
\end{figure}
For biological reasons, in the following we will focus on the case when System (\ref{EquationFeedback}) is bistable, i.e. we will assume that the following hypothesis holds true:  \begin{description}
\item[($\mathbf{H_5}$)]$n>1$ and $\eta(n-1)^{\frac{n-1}{n}}>n\delta\theta$.
\end{description} 

Note that under ($\mathbf{H_5}$), $\Theta:=\theta\sqrt[n]{(n-1)/(n+1)}<\theta \sqrt[n]{n-1}$. Therefore, according to Proposition \ref{équilibreTbet}-\textit{(iii)}, $\Theta\in (0,X_s)$.


\subsection{Existence of periodic solutions for (\ref{ImpulsiveEquationFeedback})}\label{EtudeImpulsiveTbet}
Throughout this section, we suppose that hypotheses $\mathbf{(H_3)}$, $\mathbf{(H_4)}$ and $\mathbf{(H_5)}$ hold true and we investigate the existence of periodic solutions of (\ref{ImpulsiveEquationFeedback}). We have seen in Proposition \ref{periode} that periodic solutions are either constant, or their smallest period is $\omega$.

From now on, $\varphi_\omega$ stands for the flow of System (\ref{ImpulsiveEquationFeedback}) at time $\omega$, as defined in (\ref{defVarphi}). Here $f(0)=0$ so, according to Remark \ref{ContinuitéGamma},  $\varphi_\omega^{-1}(0)=0$ and the function $\Gamma_\omega$, defined by (\ref{DefGamma}), is continuous on $\mathbb{R^+_*}$. Moreover $X(\ \cdot\ ; X_0, \alpha$) is periodic if and only if $X_0=0$  or $X_0\in \Gamma_\omega ^{-1} (\{\alpha\})$ (Proposition \ref{PropGamma}). In the rest of this section, we show the following theorem.
 \begin{theorem} Let hypotheses $\mathbf{(H_3)}$, $\mathbf{(H_4)}$ and $\mathbf{(H_5)}$ hold true. Then, there exists $\alpha^*\in(-1,0)$ and a non-empty interval $I=[I_m,I_M]\subset [X_u,X_s)$ such that \begin{itemize}
\item If $\alpha < \alpha^*$,  $X\equiv 0$ is the only periodic solution of (\ref{ImpulsiveEquationFeedback}) and for any $X_0>0$, $\lim_{t\to + \infty}X(t;X_0,\alpha)=0$.
\item 
$X(\ \cdot \ ,X_0,\alpha^*)$ is periodic if and only if $X_0\in \{0\}\cup I$.
\item If $\alpha^* < \alpha< \exp(\delta \omega)-1$, there are exactly 3 periodic solutions of (\ref{ImpulsiveEquationFeedback}): $X\equiv 0$, $X(\ \cdot \ ;X_l,\alpha)$ and $X(\ \cdot \ ;X_h,\alpha)$ where $\{X_l,X_h\}=\Gamma_\omega^{-1}(\{\alpha\}) $ such that $0<X_l<X_h$ and these solutions are respectively asymptotically stable and attractive on $[0,X_l)$, unstable and repulsive on $(0,X_l)\cup (X_l,X_h)$ and asymptotically stable and attractive on $(X_l,+\infty)$. Moreover, if $\alpha<0$, $X_l\in (0,X_u)$ and $X_h\in (X_s, +\infty)$, if $\alpha=0$, $X_l=X_u$ and $X_h=X_s$, if  $\alpha>0$, $X_l\in (X_u,I_m)$ and $X_h\in (I_M,X_s)$.
\item If $\alpha \geq \exp(\delta\omega)-1$, $X\equiv 0$ is the only periodic solution of (\ref{ImpulsiveEquationFeedback}). Moreover, for any $X_0>0$, $\lim_{t\to + \infty}X(t;X_0,\alpha)=+\infty$.
\end{itemize}
In addition, if $\Theta\neq X_u$, $I\subset (X_u,X_s)$ and if $\Theta=X_u$, $I=\{X_u\}$. \label{TheoremeSolPeriodiques}
\end{theorem}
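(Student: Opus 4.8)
The plan is to reduce the whole statement to the geometry of the single function $\Gamma_\omega$ from (\ref{DefGamma}). By Proposition \ref{PropGamma}, for $X_0>0$ the solution $X(\,\cdot\,;X_0,\alpha)$ is periodic precisely when $\Gamma_\omega(X_0)=\alpha$ (and $X\equiv 0$ is always periodic because $f(0)=0$), so the nonzero initial data of periodic solutions are exactly the level sets $\Gamma_\omega^{-1}(\{\alpha\})$. Counting periodic solutions thus amounts to counting intersections of the horizontal line $y=\alpha$ with the graph of $\Gamma_\omega$ on $(0,+\infty)$; the asymptotic behaviour and stability will then be read off through $\gamma_{\omega,\alpha}$ of (\ref{defgammaalpha}) using Proposition \ref{borneeconverge}, Lemma \ref{monotonieselongamma} and Remark \ref{LireStabilité}. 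So the entire task is to determine the shape of $\Gamma_\omega$.

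I would build that shape from three facts. (i) \emph{Sign.} By Proposition \ref{équilibreTbet}(iii), $f>0$ on $(X_u,X_s)$ and $f<0$ on $(0,X_u)\cup(X_s,+\infty)$, so $\varphi_\omega(X_0)>X_0$ on $(X_u,X_s)$ and $\varphi_\omega(X_0)<X_0$ elsewhere; hence $\Gamma_\omega<0$ on $(X_u,X_s)$, $\Gamma_\omega>0$ on $(0,X_u)\cup(X_s,+\infty)$, and $\Gamma_\omega(X_u)=\Gamma_\omega(X_s)=0$ since $X_u,X_s$ are fixed points of $\varphi_\omega$. (ii) \emph{Endpoints.} Linearising $f$ at $0$ (where $f'(0)=-\delta$, as $n>1$) gives $\varphi_\omega(X_0)=e^{-\delta\omega}X_0+o(X_0)$, and the asymptotics $f(X)\sim\eta-\delta X$ give $\varphi_\omega(X_0)=e^{-\delta\omega}X_0+\tfrac{\eta}{\delta}(1-e^{-\delta\omega})+o(1)$; both yield $\lim_{X_0\to0^+}\Gamma_\omega=\lim_{X_0\to+\infty}\Gamma_\omega=e^{\delta\omega}-1$, which identifies the upper threshold. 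The same linearisation at $0$ shows the one-period return map near $X\equiv0$ is multiplication by $(1+\alpha)e^{-\delta\omega}$, so $X\equiv0$ is asymptotically stable for $\alpha<e^{\delta\omega}-1$ and repels (with divergence to $+\infty$) for $\alpha\ge e^{\delta\omega}-1$. (iii) \emph{Monotonicity}, treated next.

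The core of the proof, and the step I expect to be the main obstacle, is to show that between its endpoints $\Gamma_\omega$ is valley shaped: strictly monotone on each of $(0,X_u)$ and $(X_s,+\infty)$, and unimodal with a single minimum on $(X_u,X_s)$. I would deduce this from the convexity results of Section \ref{sectionConvex}. Differentiating (\ref{DefGamma}) shows $\Gamma_\omega'$ has the sign of $G(X_0):=\varphi_\omega(X_0)-X_0\varphi_\omega'(X_0)$, with $G'(X_0)=-X_0\varphi_\omega''(X_0)$, so the monotonicity of $\Gamma_\omega$ is governed entirely by the convexity of $\varphi_\omega$. On $(X_u,X_s)$, where $f>0$ and $f(X_s)=0$, Theorem \ref{TheoremConvexite} applies directly: the sign of $\varphi_\omega''$ equals that of $f'(\varphi_\omega(X_0))-f'(X_0)$. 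On $(0,X_u)$ and $(X_s,+\infty)$, where $f<0$, one must instead use the reflected version Remark \ref{Cashnegative}, and a key subtlety is that the reflection \emph{flips} the criterion, so there $\varphi_\omega$ is convex iff $f'(\varphi_\omega(X_0))-f'(X_0)<0$. Feeding in Proposition \ref{LemmeInflexion} ($f$ convex below the inflection $\Theta$, concave above, so $f'$ is unimodal with maximum at $\Theta$) then yields: $\varphi_\omega$ is concave on $(X_s,+\infty)$ (the whole orbit stays above $\Theta$), forcing $G>0$ there from $G(X_s)=X_s(1-\varphi_\omega'(X_s))>0$ (as $\varphi_\omega'(X_s)=e^{\omega f'(X_s)}<1$), hence $\Gamma_\omega$ strictly increasing; and on $(X_u,X_s)$, $\varphi_\omega$ is convex-then-concave (or concave throughout), which together with $G(X_u)=X_u(1-\varphi_\omega'(X_u))<0$ (as $\varphi_\omega'(X_u)>1$) and $G(X_s)>0$ gives exactly one zero of $G$, i.e.\ a unique interior minimiser. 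The delicate point is that a forward orbit from $X_0$ may straddle $\Theta$, so $\varphi_\omega''$ need not have constant sign on $(0,X_u)$; this is exactly where the position of $\Theta$ relative to $X_u$ intervenes, and is the origin of the dichotomy in the last sentence of the theorem. To control it rigorously I would use the variational identity $\varphi_\omega''(X_0)=\varphi_\omega'(X_0)\int_0^\omega f''\!\big(\varphi_s(X_0)\big)\varphi_s'(X_0)\,\mathrm{d}s$ (with $\varphi_s$ the time-$s$ flow), whose sign is dictated by how much of the orbit lies on each side of $\Theta$, to show that $G<0$ persists on all of $(0,X_u)$ and hence that $\Gamma_\omega$ is strictly decreasing there.

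Once the valley shape is in hand the conclusion is bookkeeping. Put $\alpha^*:=\min_{X_0>0}\Gamma_\omega=\Gamma_\omega|_I<0$ and recall $\sup\Gamma_\omega=e^{\delta\omega}-1$ (not attained). For $\alpha<\alpha^*$ the line $y=\alpha$ misses the graph, leaving only $X\equiv0$, to which every solution converges by Proposition \ref{borneeconverge}. At $\alpha=\alpha^*$ the line meets the graph exactly on the minimiser set $I$. For $\alpha^*<\alpha<e^{\delta\omega}-1$ it meets the graph in exactly two points $X_l<X_h$; which of the intervals $(0,X_u)$, $(X_u,X_s)$, $(X_s,+\infty)$ contains each root is read off directly from the sign of $\Gamma_\omega$ (and at $\alpha=0$ these roots are the equilibria $X_u,X_s$). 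For $\alpha\ge e^{\delta\omega}-1$ the line again misses the graph while $X\equiv0$ has turned repelling, so every positive solution diverges to $+\infty$. Finally the stability statements follow from Lemma \ref{monotonieselongamma} and Remark \ref{LireStabilité}: at a root where $\Gamma_\omega$ is increasing (resp.\ decreasing) the periodic solution is asymptotically stable and attractive (resp.\ unstable and repulsive), and the valley shape makes precisely $X_h$ and $X\equiv0$ the attractors and $X_l$ the separating repeller, with the basins stated.
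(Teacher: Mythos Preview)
Your approach is correct and matches the paper's: reduce everything to the shape of $\Gamma_\omega$, obtain that shape from the convexity of $\varphi_\omega$ via Theorem~\ref{TheoremConvexite}, and read off the count and stability of periodic solutions from Proposition~\ref{PropGamma}, Lemma~\ref{monotonieselongamma} and Remark~\ref{LireStabilité}. The paper packages the convexity analysis as Lemma~\ref{LemmeConvexitéPhi} (a single inflection $X_c$ of $\varphi_\omega$) and then deduces the monotonicity of $\Gamma_\omega$ through Lemmas~\ref{LemmeBeta}--\ref{LemmeComparaison} (comparing $\varphi_\omega'$ with $\varphi_\omega/\mathrm{id}$); your route through $G=\varphi_\omega-X_0\varphi_\omega'$ and $G'=-X_0\varphi_\omega''$, anchored by the endpoint values $G(X_u)=X_u(1-e^{\omega f'(X_u)})<0$ and $G(X_s)=X_s(1-e^{\omega f'(X_s)})>0$, is an equivalent and slightly more direct repackaging of the same content. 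Your linearisation argument for the limits of $\Gamma_\omega$ at $0$ and $+\infty$ is a valid alternative to the explicit integral bounds the paper uses in Lemma~\ref{lemmeLimites}.

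Two points to sharpen. First, you locate the ``delicate'' straddling on $(0,X_u)$, but in the generic case $\Theta\in(X_u,X_s)$ treated first by the paper, orbits from $(0,X_u)$ decrease and stay in $(0,X_u)\subset(0,\Theta)$, so $f''>0$ along the whole orbit and $\varphi_\omega$ is simply convex there --- no variational identity is needed; the straddling actually occurs on $(X_u,X_s)$. (When $\Theta\le X_u$ the roles swap, which is why the paper says the other case is ``similar''.) Second, your assertion that $\varphi_\omega$ is ``convex-then-concave (or concave throughout)'' on the straddling interval is exactly the statement that $\varphi_\omega$ has at most one inflection there, and this needs an argument: the paper supplies it in the proof of Lemma~\ref{LemmeConvexitéPhi} by showing that $f'-f'\circ\varphi_\omega$ has a unique zero in $(\varphi_\omega^{-1}(\Theta),\Theta)$, using that $f'$ is increasing below $\Theta$ and decreasing above together with the monotonicity of $\varphi_\omega$. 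Once that single-inflection fact is in hand, your $G$-argument (and hence the valley shape of $\Gamma_\omega$) goes through cleanly.
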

To prove Theorem \ref{TheoremeSolPeriodiques}, we need to apply Proposition \ref{PropGamma} and Lemma \ref{monotonieselongamma} from Section \ref{SectionSolPeriodiques}. To this end, we first introduce  intermediate results on the behaviour of $\varphi_{\omega}$ and $\Gamma_\omega$.
\begin{lemma} Let hypothesis $\mathbf{(H_5)}$ hold true. Then, for any $\omega>0$, 
\begin{equation}
\underset{X_0\to +\infty}{\lim}\Gamma_\omega (X_0)=\underset{X_0\to 0^+}{\lim}\Gamma_\omega (X_0)=\exp (\delta\omega)-1. \nonumber
\end{equation}\label{lemmeLimites}
\end{lemma}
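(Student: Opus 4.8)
The plan is to reduce everything to the behaviour of the ratio $X_0/\varphi_\omega(X_0)$: since $\Gamma_\omega(X_0)=X_0/\varphi_\omega(X_0)-1$ by (\ref{DefGamma}), it suffices to prove that
\[
\lim_{X_0\to 0^+}\frac{X_0}{\varphi_\omega(X_0)}=\lim_{X_0\to +\infty}\frac{X_0}{\varphi_\omega(X_0)}=\exp(\delta\omega).
\]
The guiding heuristic is that in both regimes the Hill production term in $f$ is negligible against the linear decay $-\delta X$, so the flow behaves like that of $X'=-\delta X$, namely $\varphi_\omega(X_0)\approx X_0\exp(-\delta\omega)$, giving the ratio $\exp(\delta\omega)$. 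As $\varphi_\omega$ has no closed form, I would make this rigorous by sandwiching $\varphi_\omega$ between flows of explicitly solvable linear equations, using the scalar comparison principle: if $f\ge g$ pointwise on the relevant domain and two solutions share the same initial datum, the $f$-solution dominates the $g$-solution for $t\ge 0$.

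For the limit at infinity I would use the two-sided bound $-\delta X\le f(X)\le -\delta X+\eta$, valid for every $X\ge 0$ because the Hill factor $\eta X^n/(\theta^n+X^n)$ lies in $[0,\eta]$. Comparing the solution of (\ref{EquationFeedback}) with the solutions of $\underline X'=-\delta\underline X$ and $\overline X'=-\delta\overline X+\eta$ issued from $X_0$ yields
\[
X_0e^{-\delta\omega}\le \varphi_\omega(X_0)\le X_0e^{-\delta\omega}+\frac{\eta}{\delta}\bigl(1-e^{-\delta\omega}\bigr).
\]
Dividing $X_0$ by these bounds gives
\[
\frac{1}{e^{-\delta\omega}+\tfrac{\eta}{\delta X_0}\bigl(1-e^{-\delta\omega}\bigr)}\le \frac{X_0}{\varphi_\omega(X_0)}\le e^{\delta\omega},
\]
whose left-hand side tends to $e^{\delta\omega}$ as $X_0\to+\infty$; the squeeze theorem then concludes.

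For the limit at zero I would first observe that the \emph{same} global upper bound is too crude, since its additive constant $\eta$ drives the lower bound on $X_0/\varphi_\omega(X_0)$ to $0$ as $X_0\to 0^+$. Instead I would localise: for $X\in[0,\epsilon]$ one has $0\le \eta X^n/(\theta^n+X^n)\le (\eta/\theta^n)\epsilon^{n-1}X$, hence
\[
-\delta X\le f(X)\le \bigl(-\delta+\mu(\epsilon)\bigr)X,\qquad \mu(\epsilon):=\frac{\eta}{\theta^n}\epsilon^{n-1},
\]
and crucially $\mu(\epsilon)\to 0$ as $\epsilon\to 0$ \emph{because $n>1$} under hypothesis $(\mathbf{H_5})$. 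Choosing $\epsilon<X_u$, the smaller positive equilibrium of Proposition~\ref{équilibreTbet}, makes $f<0$ on $(0,\epsilon]$, so any trajectory started at $X_0\in(0,\epsilon]$ decreases and stays in $[0,\epsilon]$; the two linear bounds therefore hold along the \emph{entire} trajectory, and comparison gives $X_0e^{-\delta\omega}\le \varphi_\omega(X_0)\le X_0e^{(-\delta+\mu(\epsilon))\omega}$. This yields
\[
e^{\delta\omega}e^{-\mu(\epsilon)\omega}\le \frac{X_0}{\varphi_\omega(X_0)}\le e^{\delta\omega}\qquad\text{for all }X_0\in(0,\epsilon],
\]
and letting first $X_0\to 0^+$ and then $\epsilon\to 0$ squeezes the ratio to $e^{\delta\omega}$.

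The main obstacle is precisely this limit at the origin: unlike at infinity, no single global linear comparison delivers the sharp constant, so one must both localise to a neighbourhood of $0$ and verify that the trajectory never leaves it. Hypothesis $(\mathbf{H_5})$, specifically $n>1$, is essential here, as it is exactly what makes the Hill production sublinear near $0$ and hence asymptotically negligible against $-\delta X$; for $n=1$ the correction $\mu(\epsilon)$ would not vanish and the limiting value would change.
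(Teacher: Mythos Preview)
Your argument is correct. Both limits are established rigorously, and the localisation step near zero is handled with care: you correctly invoke $n>1$ to make $\mu(\epsilon)\to 0$, and you verify that the trajectory stays in $[0,\epsilon]$ by choosing $\epsilon<X_u$ so that $f<0$ there.

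The paper takes a closely related but technically different route. Rather than comparing with linear ODEs via the comparison principle, it applies the variation-of-constants (Duhamel) formula to write
\[
\frac{\varphi_\omega(X_0)}{X_0}=e^{-\delta\omega}+e^{-\delta\omega}\int_0^\omega e^{\delta s}\,\frac{\mathcal{H}(X(s))}{X_0}\,\mathrm{d}s,
\]
where $\mathcal{H}(x)=\eta x^n/(\theta^n+x^n)$, and then estimates the integral directly: at infinity via the uniform bound $\mathcal{H}<\eta$, and near zero via $\mathcal{H}(X(s))/X_0\le \eta X_0^{n-1}/(\theta^n+X(\omega)^n)$, using (as you do) that $X(s)\le X_0$ when $X_0<X_u$. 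The two proofs share the same key ingredients---boundedness of the Hill term, monotone decrease of the trajectory below $X_u$, and the sublinearity afforded by $n>1$---and differ only in packaging: your comparison argument avoids writing down Duhamel explicitly and yields clean two-sided bounds on $\varphi_\omega(X_0)$ itself, while the paper's integral formulation makes the additive correction term visible and estimates it in one step. Both are standard and of comparable length.
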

\begin{proof}
We first rewrite equation (\ref{EquationFeedback}) in the form $X'(t)=\mathcal{H}\left( X(t)\right) -\delta X(t)$
  with   \begin{equation}
\mathcal{H}:  x\in \mathbb{R}^+ \mapsto  \eta\dfrac{x^n}{\theta^n+x^n}\in \left[ 0,\eta\right).
\nonumber
\end{equation}
Hence, for all $ X_0 \in \mathbb{R^+_*}$, $$\varphi_\omega(X_0)=X(\omega;X_0,0)=X_0e^{-\delta\omega}+e^{-\delta\omega}\int_0^\omega e^{\delta s}\mathcal{H}\left( X(s)\right) \mathrm{d} s,$$
thus \begin{equation}\frac{X(\omega;X_0,0)}{X_0}=e^{-\delta\omega}+e^{-\delta\omega}\int_0^\omega e^{\delta s}\frac{\mathcal{H}\left( X(s)\right)}{X_0} \mathrm{d}s.\label{ExpressionVarphi}\end{equation}
Since $\mathcal{H}<\eta$ on $\mathbb{R}^+$ and $e^{\delta s}\leq e^{\delta \omega}$ for $s\in(0,\omega)$, one obtains $$0\leq e^{-\delta\omega}\int_0^\omega e^{\delta s}\frac{\mathcal{H}\left( X(s)\right)}{X_0} \mathrm{d}s \leq 
\omega \frac{\eta}{X_0} . $$
Then it is clear from (\ref{ExpressionVarphi}) that $$\underset{X_0\to + \infty}{\lim} \frac{X(\omega;X_0,0)}{X_0}=\exp (-\delta\omega)$$ and then, from (\ref{DefGamma}),
$\underset{X_0\to + \infty}{\lim}\Gamma_\omega (X_0)=  \exp (\delta\omega)-1$.

We now study the limit of $\Gamma_\omega$ as $X_0$ goes to zero.
Let us recall that if $X_0<X_u$, then $f(X_0)<0$ and $X(\ \cdot \ ;X_0,0)$ decreases on $[0,\omega]$. Consequently, for $s\in [0,\omega]$, $X(\omega)\leq X(s) \leq X_0$ and we have the inequality $$\frac{\mathcal{H}\left( X(s)\right)}{X_0} = \eta\dfrac{X(s)^n}{\left( \theta^n+X(s)^n\right)X_0 } \leq \eta\dfrac{X_0^{n-1}}{ \theta^n+X(\omega)^n } . $$
Therefore $$e^{-\delta\omega}\int_0^\omega e^{\delta s}\frac{\mathcal{H}\left( X(s)\right)}{X_0} \mathrm{d}s \leq 
\omega \eta\dfrac{X_0^{n-1}}{ \theta^n+X(\omega)^n }  . $$ Finally, from (\ref{ExpressionVarphi}), $$\underset{X_0\to 0^+}{\lim}\frac{X(\omega;X_0,0)}{X_0}=\exp(-\delta \omega),$$ so $\underset{X_0\to 0^+}{\lim}\Gamma_\omega (X_0)=\exp (\delta\omega)-1$. This concludes the proof.  
\end{proof}

\begin{lemma}
Let hypothesis $\mathbf{(H_5)}$ hold true, $\omega>0$ and $\Theta$ as defined in Proposition~\ref{LemmeInflexion}.  There exists $X_c\in (0,X_s)$ such that $\varphi_\omega$ is convex on $(0,X_c)$ and concave on $(X_c,+\infty)$. Moreover, $X_c=X_u$ if and only if $ X_u=\Theta$. \label{LemmeConvexitéPhi}
\end{lemma}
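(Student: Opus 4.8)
The plan is to read off the local convexity of $\varphi_\omega$ from Theorem~\ref{TheoremConvexite}, which reduces the question to the sign of $G(X_0):=f'(\varphi_\omega(X_0))-f'(X_0)$. Under $\mathbf{(H_5)}$, Proposition~\ref{équilibreTbet}(iii) gives $f<0$ on $(0,X_u)$, $f>0$ on $(X_u,X_s)$ and $f<0$ on $(X_s,+\infty)$, so the trajectory through $X_0$ is decreasing on the two outer branches and increasing on $(X_u,X_s)$. On the increasing branch Theorem~\ref{TheoremConvexite} applies directly ($\varphi_\omega$ is convex iff $G(X_0)>0$); on each decreasing branch I would apply it through the reflection of Remark~\ref{Cashnegative}, which reverses the inequality, so there $\varphi_\omega$ is convex iff $G(X_0)<0$. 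In all cases the sign of the local convexity of $\varphi_\omega$ equals that of $\mathrm{sign}(f(X_0))\,G(X_0)$. The second ingredient is Proposition~\ref{LemmeInflexion}: $f'$ is strictly unimodal, strictly increasing on $(0,\Theta)$ and strictly decreasing on $(\Theta,+\infty)$, with maximum at $\Theta$. I also use that $\varphi_\omega$ is strictly increasing and $\mathcal{C}^\infty$, so that $\varphi_\omega''$ is continuous.

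Next I would partition $(0,+\infty)$ by the points $X_u$, $X_s$, $\Theta$ and compare the positions of $X_0$ and $\varphi_\omega(X_0)$ relative to $\Theta$, using that the trajectory is monotone and that solutions keep their order in $X_0$. Whenever the whole trajectory from $X_0$ to $\varphi_\omega(X_0)$ stays on one side of $\Theta$, unimodality of $f'$ fixes the sign of $G$, hence of the convexity: a short sign check shows $\varphi_\omega$ is strictly convex on $(0,\min(X_u,\Theta))$ and strictly concave on $(\max(X_u,\Theta),+\infty)$ (the latter contains $(X_s,+\infty)$ in every case). The only region where the sign of $G$ is not immediately decided is the straddle interval lying between $\Theta$ and $X_u$, where $X_0$ and $\varphi_\omega(X_0)$ sit on opposite sides of $\Theta$; this is where the transition point $X_c$ must lie.

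The main obstacle is to show that the convexity switches exactly once, i.e. that $X_c$ is unique. I would do this on the straddle subinterval by proving that $G$ is strictly monotone there. Indeed, on that subinterval one of the two arguments $X_0$, $\varphi_\omega(X_0)$ lies below $\Theta$ and the other above; since $\varphi_\omega$ is strictly increasing while $f'$ is strictly increasing below $\Theta$ and strictly decreasing above, the two terms $f'(X_0)$ and $f'(\varphi_\omega(X_0))$ vary in opposite monotone senses, so $G=f'(\varphi_\omega(X_0))-f'(X_0)$ is strictly monotone and changes sign at most once. Evaluating $G$ at the two endpoints of the straddle subinterval (where it inherits opposite signs from the adjacent pure regions) produces exactly one zero, which defines $X_c$; continuity of $\varphi_\omega''$ then lets me glue across the fixed points $X_u$, $X_s$ (smooth points of $\varphi_\omega$), so that $\varphi_\omega$ is convex on all of $(0,X_c)$ and concave on all of $(X_c,+\infty)$.

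Finally, the equivalence $X_c=X_u\iff X_u=\Theta$ comes out of the same case analysis. If $X_u=\Theta$, the straddle interval degenerates to the single point $X_u$: $\varphi_\omega$ is convex on $(0,X_u)$ and concave on $(X_u,+\infty)$, so $X_c=X_u$. If $X_u\neq\Theta$, the straddle subinterval is a genuine interval strictly between $\Theta$ and $X_u$, so $X_c$ lies strictly between them and in particular $X_c\neq X_u$. In every case $X_c\leq\max(X_u,\Theta)<X_s$, using $X_u<X_s$ and $\Theta<X_s$ under $\mathbf{(H_5)}$, hence $X_c\in(0,X_s)$, which completes the proof.
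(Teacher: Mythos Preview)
Your proposal is correct and follows essentially the same route as the paper: both reduce the question to the sign of $G(X_0)=f'(\varphi_\omega(X_0))-f'(X_0)$ via Theorem~\ref{TheoremConvexite} and Remark~\ref{Cashnegative}, partition $(0,+\infty)$ using the fixed points $0,X_u,X_s$ and the inflexion point $\Theta$ of $f$, exploit the unimodality of $f'$ to fix the sign of $G$ on the ``pure'' subintervals, isolate a single straddle subinterval $(\varphi_\omega^{-1}(\Theta),\Theta)$ (or its mirror when $\Theta<X_u$) where the transition must occur, and glue across the fixed points using $\varphi_\omega\in\mathcal{C}^\infty$. Your uniqueness argument---showing directly that $G$ is strictly monotone on the straddle interval because $f'(X_0)$ and $f'(\varphi_\omega(X_0))$ move in opposite senses---is a clean variant of the paper's contradiction argument, which rests on exactly the same two inequalities.

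Two small caveats. First, Theorem~\ref{TheoremConvexite} only delivers convexity, not strict convexity (the paper flags this limitation in its Discussion), so your claims of ``strictly convex/concave'' are a slight overreach; they are harmless here since the lemma only asserts convex/concave. Second, your phrase ``the straddle interval lying between $\Theta$ and $X_u$'' is a little loose: the straddle interval is only the part of $(\min(X_u,\Theta),\max(X_u,\Theta))$ where $X_0$ and $\varphi_\omega(X_0)$ actually sit on opposite sides of $\Theta$, and the remaining non-straddle piece (e.g.\ $(X_u,\varphi_\omega^{-1}(\Theta))$ when $\Theta>X_u$) still needs its sign recorded. You do this implicitly via your ``whole trajectory stays on one side of $\Theta$'' clause, but it is worth stating explicitly.
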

\begin{proof}
Let us assume $\Theta\in (X_u,X_s)$. The proof is similar if $\Theta \in (0,X_u]$. In order to apply Theorem \ref{TheoremConvexite}, we investigate the sign of $f'-f'\circ \varphi_\omega$ on $(0,+\infty)$.

As steady states of (\ref{EquationFeedback}), $0,\ X_u$ and $X_s$ are fixed points of $\varphi_\omega$. Therefore, if $x\in \{0,X_u,X_s\}$, then $f'(x)-f'(\varphi_\omega(x))=0$.

If $X_0\in (0,X_u)\cup(X_s,+\infty)$, then $f(X_0)<0$ and the solution of (\ref{ImpulsiveEquationFeedback}) decreases on $[0,\omega)$, so $0<\varphi_\omega(X_0)<X_0$. According to Proposition \ref{LemmeInflexion}, $f$ is strictly convex on $(0,X_u)\subset(0,\Theta)$ and strictly concave on $(X_s,+\infty)\subset(\Theta,+\infty)$. Consequently, for all $X_0\in(0,X_u)$, $f'(X_0)-f'(\varphi_\omega(X_0))>0$  and for all $X_0\in (X_s,+\infty)$, $f'(X_0)-f'(\varphi_\omega(X_0))<0$. 

If $X_0\in (X_u,X_s)$, then $f(X_0)>0$, the solution of (\ref{ImpulsiveEquationFeedback}) increases on $[0,\omega)$ and $\varphi_\omega(X_0)>X_0$. Moreover,  if $X_0\in (X_u,\varphi_\omega^{-1}(\Theta))$, then $\varphi_\omega(X_0)\in (X_0,\Theta)$, that is $X_u<X_0<\varphi_\omega(X_0)<\Theta$. Since  $f$ is strictly convex on $(X_u,\Theta)$ (Proposition \ref{LemmeInflexion}), it follows $f'(X_0)-f'(\varphi_\omega(X_0))<0$
. Similarly, for all $X_0$ in $[\Theta,X_s)$, $f'(X_0)-f'(\varphi_\omega(X_0))>0$. 
It remains to prove that there exists a unique $X_c\in (\varphi_\omega^{-1}(\Theta),\Theta)$ such that $f'(X_c)-f'(\varphi_\omega(X_c))=0$.

Since $f, \varphi_\omega \in \mathcal{C}^{\infty}(\mathbb{R})$, it follows $f'-f'\circ \varphi_\omega \in \mathcal{C}^{\infty}(\mathbb{R})$  and then there exists $X_c \in (\varphi_\omega^{-1}(\Theta),\Theta)$ such that $f'(X_c)-f'(\varphi_\omega(X_c))=0$.  To show the uniqueness of $X_c$, we suppose there exists $Y_c\in (\varphi_\omega^{-1}(\Theta),\Theta)$, $Y_c>X_c$, such that $f'(Y_c)-f'(\varphi_\omega(Y_c))=0$. Since $\varphi_\omega$ is increasing and $\varphi_\omega(X_s)=X_s$, it follows $X_c<Y_c<\Theta<\varphi_\omega(X_c)< \varphi_\omega(Y_c)<\varphi_\omega(\Theta)<X_s$. From Proposition \ref{LemmeInflexion}, $f$ is convex on $(\varphi_\omega^{-1}(\Theta),\Theta)$ so $f'(X_c)<f'(Y_c)$.
On the other hand, from Proposition \ref{LemmeInflexion}, $f$ is concave on $(\Theta,X_s)$ so  $f'(\varphi_\omega(X_c))>f'(\varphi_\omega(Y_c))$. There is a contradiction since $f'(X_c)=f'(\varphi_\omega(X_c))$ and $f'(Y_c)=f'(\varphi_\omega(Y_c))$.

 Then $f'(\varphi_\omega(x))=f'(x)$ if and only if $x\in \{0,X_u,X_c,X_s\}$ and $f'(\varphi_\omega(x))-f'(x)<0$ (resp. $>0$) if $x\in (0,X_u)\cup (X_c,X_s)$ (resp.  if $x\in (X_u,X_c)\cup [X_s,+\infty)$). 

According to Theorem \ref{TheoremConvexite}, and Remark \ref{Cashnegative} since $f<0$ on $(0,X_u)\cup(X_s,+\infty)$, we conclude that $\varphi_\omega$ is convex on $(0,X_u)$ and on $(X_u,X_c)$ and concave on $(X_c,X_s)$ and on $(X_s,+\infty)$. Since $\varphi_\omega\in \mathcal{C}^\infty (\mathbb{R}^+)$, then $\varphi_\omega$ is convex on $(0,X_c)$ and concave on $(X_c,+\infty)$. 

In the particular case  $X_u=\Theta$, $f$ is strictly convex on $(0,X_u)$ and strictly concave on $(X_u,+\infty)$. Thus $f'(\varphi_\omega(x))-f'(x)<0$ if $x\in (0,X_u)\cup (X_u,X_s)$ and  $f'(\varphi_\omega(x))-f'(x)<0$ if $x\in (X_s,+\infty)$. It follows that $\varphi_\omega$ is convex on $(0,X_u)$ and concave on $(X_u,+\infty)$, i.e. $X_c=X_u$. The converse is straightforward.  
\end{proof}

\begin{lemma}
Let hypothesis $\mathbf{(H_5)}$ hold true and $\omega>0$. Then, there exists $\beta>1$ and $x_\beta>0$ such that for all $x>0$, $\varphi_\omega(x)\leq \beta x$ and $\varphi_\omega(x_\beta)=\beta x_\beta$. Moreover, if there exists $x>0$ such that $\varphi_\omega(x)=\beta x$, then $\varphi_\omega'(x)=\beta$.\label{LemmeBeta}
\end{lemma}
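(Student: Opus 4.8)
The plan is to study the ratio $r(x):=\varphi_\omega(x)/x$ on $(0,+\infty)$ and to take $\beta$ to be its global maximum. Since $\varphi_\omega\in\mathcal{C}^\infty(\mathbb{R}^+)$ and $x>0$, the function $r$ is continuous (indeed smooth) on $(0,+\infty)$. From the definition (\ref{DefGamma}) of $\Gamma_\omega$ one has $r(x)=1/(\Gamma_\omega(x)+1)$, so Lemma \ref{lemmeLimites} immediately gives $\lim_{x\to 0^+}r(x)=\lim_{x\to+\infty}r(x)=\exp(-\delta\omega)<1$. Hence the ratio is strictly below $1$ near both ends of the half-line.

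Next I would localise where $r$ can exceed $1$. Under $(\mathbf{H_5})$, Proposition \ref{équilibreTbet}-(iii) provides steady states $0<X_u<X_s$ with $f<0$ on $(0,X_u)\cup(X_s,+\infty)$ and $f>0$ on $(X_u,X_s)$. Consequently the flow decreases where $f<0$ and increases where $f>0$, so $\varphi_\omega(x)<x$ (i.e. $r(x)<1$) on $(0,X_u)\cup(X_s,+\infty)$, while $\varphi_\omega(x)>x$ (i.e. $r(x)>1$) on $(X_u,X_s)$, and $r(X_u)=r(X_s)=1$ because $X_u,X_s$ are fixed points of $\varphi_\omega$. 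Combining this with the end limits being $<1$, every value of $r$ strictly exceeding $1$ is achieved on the interior of the compact interval $[X_u,X_s]$.

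The existence of $\beta$ and $x_\beta$ then follows from a compactness argument: $r$ is continuous on $[X_u,X_s]$, hence attains a maximum there at some $x_\beta$; since $r>1$ on $(X_u,X_s)$ and $r=1$ at the endpoints, this maximum is $>1$ and $x_\beta\in(X_u,X_s)$. Because $r<1$ outside $[X_u,X_s]$, the value $\beta:=r(x_\beta)>1$ is in fact the global maximum of $r$ on $(0,+\infty)$, which is exactly the two assertions $\varphi_\omega(x)\le\beta x$ for all $x>0$ and $\varphi_\omega(x_\beta)=\beta x_\beta$.

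For the final tangency statement I would invoke Fermat's first-order condition. If $\varphi_\omega(x)=\beta x$ for some $x>0$, then $r(x)=\beta$ equals the global maximum, so $x$ is an interior maximiser of the differentiable function $r$ on the open set $(0,+\infty)$; hence $r'(x)=0$. Since $r'(x)=(\varphi_\omega'(x)x-\varphi_\omega(x))/x^2$, this gives $\varphi_\omega'(x)x=\varphi_\omega(x)=\beta x$, and dividing by $x>0$ yields $\varphi_\omega'(x)=\beta$. The only point requiring genuine care is guaranteeing that the supremum is \emph{attained} rather than merely approached; this is precisely what the end limits $\exp(-\delta\omega)<1$ secure, by trapping every near-maximal point inside the compact set $[X_u,X_s]$, whereupon continuity does the rest.
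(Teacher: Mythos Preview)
Your proof is correct and follows essentially the same route as the paper: both study the ratio $\varphi_\omega(x)/x$, use the sign of $f$ on $(0,X_u)$, $(X_u,X_s)$, $(X_s,+\infty)$ to localise where the ratio exceeds $1$, extract the maximum by compactness on $[X_u,X_s]$, and obtain $\varphi_\omega'(x)=\beta$ from the first-order condition at an interior maximiser. The only cosmetic difference is that you invoke Lemma~\ref{lemmeLimites} for the end limits, which is harmless but unnecessary---once $r\leq 1$ outside $[X_u,X_s]$ and $r>1$ inside, compactness already yields an attained global maximum without any appeal to the asymptotics.
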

\begin{proof}
The function $x\mapsto \varphi_\omega(x)/x$ is continuous on $(0,+\infty)$ and, in particular, on the closed interval $[X_u,X_s]$. Therefore there exists $\beta=\max_{x\in[X_u,X_s]}\varphi_\omega(x)/x$. Moreover, for all $x\in (X_u,X_s)$, $\varphi_\omega(x)/x>1$ and for all $x\in (0,X_u]\cup [X_s,+\infty)$, $\varphi_\omega(x)/x\leq 1$  (cf. proof of Lemma \ref{LemmeConvexitéPhi}). Consequently $\beta=\max_{x>0}\varphi_\omega(x)/x$ and $\beta>1$. Hence there exists $x_\beta\in [X_u,X_s]$ such that $\varphi_\omega(x_\beta)=\beta x_\beta$ and for all $x>0$, $\varphi_\omega(x)\leq \beta x$.

Let $x>0$ be such that $\varphi_\omega(x)/x=\beta$. Then $x\in \arg\max_{x>0}(\varphi_\omega/id)$, where $id$ denotes the  identity function on $\mathbb{R}^+$. As an extremum of  a $\mathcal{C}^\infty$-function, $x$ satisfies $(\varphi_\omega/id)'(x)=0$. That is 
 $\varphi_\omega'(x)= \varphi_\omega(x)/x=\beta$.  
\end{proof}

\begin{definition} For any $\omega>0$, let $X_c$ denotes the inflexion point of $\varphi_\omega$ introduced in Lemma \ref{LemmeConvexitéPhi},  $\beta:=\max_{x>0}(\varphi_\omega(x)/x)$, whose existence is stated in Lemma \ref{LemmeBeta} and $I:=\{x>0,\ \varphi_\omega(x)=\beta x \}$. It may be noted that $X_c$,  $\beta$ and $I$ depend on the value of $\omega$. \label{defXcBetaI}\end{definition}

\begin{lemma}
Let hypothesis $\mathbf{(H_5)}$ hold true and $\omega>0$. Then $I=[I_m,I_M]$ with $X_c<I_m\leq I_M<X_s$.\label{LemmeI}
\end{lemma}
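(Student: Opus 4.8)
The plan is to study the auxiliary function $G:=\varphi_\omega-\beta\,\mathrm{id}$, so that by Definition~\ref{defXcBetaI} we have $G\leq 0$ on $(0,+\infty)$ and $I=G^{-1}(\{0\})$. Since $G$ is continuous and $I$ is nonempty (it contains the point $x_\beta$ produced by Lemma~\ref{LemmeBeta}), $I$ is automatically closed; once $I$ is located inside a compact subinterval of $(0,+\infty)$, the extrema $I_m=\min I$ and $I_M=\max I$ will exist, and it will remain only to show that $I$ is connected.

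First I would locate $I$ coarsely. Recall from the proof of Lemma~\ref{LemmeConvexitéPhi} (reused in Lemma~\ref{LemmeBeta}) that $\varphi_\omega(x)/x>1$ exactly on $(X_u,X_s)$ while $\varphi_\omega(x)/x\leq 1$ on $(0,X_u]\cup[X_s,+\infty)$. Because $\beta>1$ (Lemma~\ref{LemmeBeta}), any point of $I$ satisfies $\varphi_\omega(x)/x=\beta>1$, hence $I\subset(X_u,X_s)$; in particular $X_s\notin I$ and, $I$ being compact, $I_M<X_s$.

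The key step is the lower bound $I_m>X_c$, and it is here that the convexity from Lemma~\ref{LemmeConvexitéPhi} enters. On $(0,X_c)$ the map $\varphi_\omega$, hence $G$, is convex, while the computation in the proof of Lemma~\ref{lemmeLimites} gives $\lim_{x\to 0^+}\varphi_\omega(x)/x=e^{-\delta\omega}<1<\beta$, so $G(x)<0$ on a right neighbourhood of $0$. I would then argue that $G$ cannot vanish on $(0,X_c]$: a convex function attaining its global maximum (here $0$) at an interior point of $(0,X_c)$ must be constant on that interval, contradicting $G<0$ near $0$; and the boundary case $x_0=X_c$ is excluded using that $\varphi_\omega$ is $\mathcal{C}^1$ with $\varphi_\omega'(x_0)=\beta$ for $x_0\in I$ (Lemma~\ref{LemmeBeta}), which forces $G'(X_c)=0$, hence $G'\leq 0$ on $(0,X_c)$ by monotonicity of the derivative of a convex function, hence $G\geq 0=G(X_c)$ to the left of $X_c$ and again $G\equiv 0$, a contradiction. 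Thus $I\cap(0,X_c]=\emptyset$, and since $I$ is closed this yields $I_m>X_c$. I expect this dichotomy (interior point versus the inflexion point $X_c$) to be the main obstacle, precisely because Lemma~\ref{LemmeConvexitéPhi} only furnishes convexity and not strict convexity, so the argument must go through the rigidity of an interior maximum of a convex function rather than a second-derivative sign.

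Finally, to show that $I$ is the full interval $[I_m,I_M]$ I would use concavity. By the previous steps $[I_m,I_M]\subset(X_c,X_s)\subset(X_c,+\infty)$, where $\varphi_\omega$, hence $G$, is concave (Lemma~\ref{LemmeConvexitéPhi}). Since $G\leq 0$ and $G(I_m)=G(I_M)=0$, concavity forces $G\geq 0$ on the chord $[I_m,I_M]$, so $G\equiv 0$ there; therefore $[I_m,I_M]\subset I$, and together with the trivial inclusion $I\subset[I_m,I_M]$ this gives $I=[I_m,I_M]$, completing the proof with $X_c<I_m\leq I_M<X_s$.
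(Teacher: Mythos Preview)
Your proof is correct and follows essentially the same approach as the paper: both use the convexity/concavity decomposition of $\varphi_\omega$ around $X_c$ (Lemma~\ref{LemmeConvexitéPhi}) together with $\varphi_\omega'=\beta$ on $I$ (Lemma~\ref{LemmeBeta}) to locate $I$ strictly to the right of $X_c$, and concavity on $(X_c,+\infty)$ to show $I$ is an interval. The only notable differences are cosmetic: the paper treats the case $x_0\in I\cap(0,X_c]$ in one stroke (convexity gives $\varphi_\omega'\leq\beta$ on $(0,x_0)$, hence $G$ nonincreasing there, and the contradiction is read off at $X_u$ via $\varphi_\omega(X_u)=X_u<\beta X_u$), whereas you split into an interior-maximum argument and a boundary argument at $X_c$ and obtain your contradiction from the limit $\varphi_\omega(x)/x\to e^{-\delta\omega}$ as $x\to0^+$; for connectedness, the paper squeezes $\varphi_\omega'$ between $\beta$ and $\beta$ to get linearity while you use the chord inequality on the concave function $G$ directly.
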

\begin{proof}
From Lemma \ref{LemmeBeta} we know that $I$ is non-empty and $I\subset (X_u,X_s)$. Moreover, $I=(\varphi_\omega/id)^{-1}(\{\beta\})$, where $id$ is the identity function and $\varphi_\omega/id$ is continuous on $(0,+\infty)$, so $I$ is a closed set.
Let's show that $I\subset(X_c,X_s)$.  

By contradiction, let us suppose that there exists $x_0\in I$ such that $X_u<x_0 \leq X_c$. 
According to Lemmas  \ref{LemmeConvexitéPhi} and \ref{LemmeBeta}, for all $0<x<x_0$, $\varphi_\omega'(x)\leq \varphi_\omega'(x_0)=\beta$.
Then the function $x\mapsto \varphi_\omega(x)-\beta x$ is decreasing on $(0,x_0)$. Consequently, for all $x\in (0,x_0)$, $\varphi_\omega(x)-\beta x \geq \varphi_\omega(x_0)-\beta x_0=0$, by definition of $x_0$, and in particular $\varphi_\omega(X_u)-\beta X_u\geq 0$.

On the other hand, since $X_u$ is a steady state of (\ref{EquationFeedback}), then $\varphi_\omega(X_u)=X_u$ and since $\beta>1$ (Lemma \ref{LemmeBeta}), then $\varphi_\omega(X_u)-\beta X_u=X_u(1-\beta)<0$. There  is a contradiction, so $I\subset(X_c,X_s)$.

It remains to show that $I$ is an interval. If $I=\{x_\beta\}$, given by Lemma \ref{LemmeBeta}, the proof is complete. Assume $x_0,y_0\in I$ with $x_0<y_0$ and let $z_0\in (x_0,y_0)$. 
 Since $x_0>X_c$ and $\varphi_\omega$ is concave on $(X_c,+\infty)$, it follows from Lemma \ref{LemmeBeta} and Definition \ref{defXcBetaI} that $$\beta=\varphi_\omega'(x_0)\geq \varphi_\omega'(z_0)\geq \varphi_\omega'(y_0)= \beta.$$
Therefore $\varphi_\omega'(z_0)=\beta$ for all $z_0\in (x_0,y_0)$, so $\varphi_\omega$ is linear. Since $\varphi_\omega(x_0)=\beta x_0$, we deduce $\varphi_\omega(z_0)=\beta z_0$, that is $z_0\in I$ and $I$ is an interval. 
\end{proof}

\begin{lemma}
Let hypothesis $\mathbf{(H_5)}$ hold true, $\omega>0$, $I_m$, $I_M$ as defined in Lemma \ref{LemmeI} and $x\geq 0$. If $x\in (0,I_m)$, then $\varphi_\omega'(x)>\varphi_\omega(x)/x$ and if $x\in (I_M,+\infty)$, then $\varphi_\omega'(x)<\varphi_\omega(x)/x$. Otherwise, $\varphi_\omega'(x)=\varphi_\omega(x)/x$.
\label{LemmeComparaison}\end{lemma}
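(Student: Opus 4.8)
The plan is to reduce everything to the sign of the derivative of the ratio $\psi(x):=\varphi_\omega(x)/x$. Since $\varphi_\omega(0)=0$ (because $f(0)=0$) and $\varphi_\omega\in\mathcal{C}^\infty(\mathbb{R}^+)$, I set $\chi(x):=x\varphi_\omega'(x)-\varphi_\omega(x)$, so that $\psi'(x)=\chi(x)/x^2$. For $x>0$ the three alternatives in the statement, $\varphi_\omega'(x)>\varphi_\omega(x)/x$, $=$, or $<$, are then \emph{exactly} $\chi(x)>0$, $\chi(x)=0$, $\chi(x)<0$, i.e. $\psi$ strictly increasing, constant, or strictly decreasing. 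The middle ("otherwise") case is immediate: for $x\in[I_m,I_M]=I$, Lemma \ref{LemmeBeta} gives $\varphi_\omega'(x)=\beta=\varphi_\omega(x)/x$, hence $\chi\equiv 0$ on $I$. It remains to produce the strict signs of $\chi$ off $I$.

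The engine is the monotonicity of $\chi$. A direct computation gives $\chi'(x)=x\varphi_\omega''(x)$, while $\chi(0^+)=0$ (both $x\varphi_\omega'(x)$ and $\varphi_\omega(x)$ tend to $0$). By Lemma \ref{LemmeConvexitéPhi}, $\varphi_\omega$ is convex on $(0,X_c)$ and concave on $(X_c,+\infty)$, so $\chi$ is nondecreasing on $(0,X_c)$ and nonincreasing on $(X_c,+\infty)$; moreover $\chi(I_m)=\chi(I_M)=0$ by Lemma \ref{LemmeBeta}. On the concave side I would argue as follows. For $x\in(I_M,+\infty)$, monotonicity gives $\chi(x)\le\chi(I_M)=0$; if $\chi(x)=0$ then $\chi\equiv 0$ on $[I_M,x]$ (a nonincreasing function squeezed between two equal zeros), forcing $\psi\equiv\beta$ there and hence $x\in I$, contradicting $I_M=\max I$ from Lemma \ref{LemmeI}; thus $\chi<0$ on $(I_M,+\infty)$. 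Symmetrically, using $I_m=\min I$, I get $\chi>0$ on $(X_c,I_m)$, and in particular $\chi(X_c)>0$.

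On the convex side $(0,X_c]$, $\chi$ is nondecreasing from $\chi(0^+)=0$ up to $\chi(X_c)>0$, so at least $\chi\ge 0$; combining all pieces yields $\chi>0$ on $(0,I_m)$, $\chi=0$ on $[I_m,I_M]$ and $\chi<0$ on $(I_M,+\infty)$, which is the claim. The delicate point, and the main obstacle, is upgrading $\chi\ge 0$ to the \emph{strict} inequality $\chi>0$ on $(0,X_c)$: here the min/max-of-$I$ trick used on the concave side is unavailable, since near $0$ one has $\psi(x)\to\varphi_\omega'(0)=e^{-\delta\omega}<1<\beta$, so a flat piece of $\psi$ near $0$ would not contradict the definition of $I_m$. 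What is genuinely needed is the \emph{strict} convexity of $\varphi_\omega$ on $(0,X_c)$, a statement slightly stronger than the (weak) convexity recorded in Lemma \ref{LemmeConvexitéPhi}.

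To settle this last point I would use the variational formula for the flow. Writing $u(s)=\partial_{X_0}X(s;x)>0$ for the first variation and solving the second variational equation gives $\varphi_\omega''(x)=u(\omega)\int_0^\omega f''(X(s;x))\,u(s)\,\mathrm{d}s$. Since $f$ is strictly convex on $(0,\Theta)$ (Proposition \ref{LemmeInflexion}) and $u>0$, the integrand is strictly positive as long as the trajectory $s\mapsto X(s;x)$ stays below $\Theta$; this gives $\varphi_\omega''(x)>0$, hence strict convexity, for every $x$ whose whole orbit on $[0,\omega]$ remains in $(0,\Theta)$ (in particular for $x\in(0,X_u)$, where the flow is decreasing). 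The only residual sub-case is $x\in(X_u,X_c)$, whose increasing trajectory may cross $\Theta$ and make the sign of the integral nonobvious; controlling this mixed-sign integral is the crux of the argument, and I expect it to consume most of the effort. Once strict convexity on $(0,X_c)$ is in hand, $\chi$ is strictly increasing there, so $\chi>0$ on $(0,X_c]$, and the proof is complete.
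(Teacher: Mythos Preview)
Your $\chi(x)=x\varphi_\omega'(x)-\varphi_\omega(x)$ framework is correct, and on the concave side $(X_c,+\infty)$ your arguments are equivalent to the paper's. The gap is on $(0,X_c)$: you leave the proof unfinished, declaring the ``crux'' to be strict convexity of $\varphi_\omega$ on the \emph{whole} interval $(0,X_c)$ via the second variational equation, with the mixed-sign case $x\in(X_u,X_c)$ unresolved. But you have overstated what is needed. Since $\chi$ is already nondecreasing on $(0,X_c)$ with $\chi(0^+)=0$, it suffices to have $\varphi_\omega''>0$ on \emph{any} right-neighbourhood $(0,\epsilon)$ of $0$: then $\chi$ is strictly increasing there, so $\chi>0$ on $(0,\epsilon]$, and monotonicity gives $\chi\ge\chi(\epsilon)>0$ on $[\epsilon,X_c)$. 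Your own variational formula delivers this immediately for $x\in\bigl(0,\min(\Theta,X_u)\bigr)$, where the decreasing orbit stays below $\Theta$ and $f''>0$. (Your parenthetical ``in particular for $x\in(0,X_u)$'' is imprecise, since $(\mathbf{H_5})$ does not force $\Theta\ge X_u$.) The hard residual case you flag never arises.

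The paper, by contrast, avoids strict convexity and the second variation entirely on $(0,X_c)$. It supposes $\varphi_\omega'(x_0)=\varphi_\omega(x_0)/x_0$ for some $x_0\in(0,X_c)$, uses (weak) convexity and the mean value theorem to force $\varphi_\omega(x)/x$ to be constant on $(0,x_0)$, identifies this constant as $e^{-\delta\omega}$ via Lemma~\ref{lemmeLimites}, and then contradicts the ODE comparison $f(x)>-\delta x$, which yields $\varphi_\omega(x)>e^{-\delta\omega}x$ for all $x>0$. Your route, once completed as above, is a clean convexity argument independent of that limit; the paper's route trades the variational machinery for the asymptotics already computed.
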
 
\begin{proof}
We first focus on the interval $(0,X_c)\subset(0,I_m)$. Since $\varphi_\omega$ is convex on $(0,X_c)$ (Lemma \ref{LemmeConvexitéPhi}) and $\varphi_\omega(0)=0$, from the mean value theorem one has, for all $x\in(0,X_c)$, $\varphi_\omega'(x)\geq \varphi_\omega(x)/x$.

Let us suppose there exists $x_0\in (0,X_c)$ such that $\varphi_\omega'(x_0) =\varphi_\omega(x_0)/x_0$. 
Then we claim that for all $x\in (0,x_0)$, $\varphi_\omega(x)/x=\varphi_\omega(x_0)/x_0$. Indeed, from the convexity of $\varphi_\omega$, \begin{equation}\text{ for all }x\in (0,x_0),\ \varphi_\omega(x)/x\leq \varphi_\omega'(x)\leq \varphi_\omega'(x_0)=\varphi_\omega(x_0)/x_0.\label{RelationConvexité}\end{equation}
Assume there exists $x\in (0,x_0)$ such that $\varphi_\omega(x)/x<\varphi_\omega(x_0)/x_0$.  Then,  from the mean value theorem, there exists $c\in (x,x_0)$ such that \begin{equation}
\varphi_\omega'(c)=\frac{\varphi_\omega(x_0)-\varphi_\omega(x)}{x_0-x}>\frac{\varphi_\omega(x_0)-x\varphi_\omega(x_0)/x_0}{x_0-x}=\frac{\varphi_\omega(x_0)}{x_0}. \nonumber
\end{equation}
There is a contradiction with (\ref{RelationConvexité}), so for all $x\in (0,x_0)$, $\varphi_\omega(x)/x=\varphi_\omega(x_0)/x_0$.

On the other hand, according to Lemma \ref{lemmeLimites}, $$\lim_{x\to 0^+}\varphi_\omega(x)/x=\lim_{x\to 0^+}(\Gamma_\omega(x)+1)^{-1}=\exp(-\delta \omega).$$ It follows that $\varphi_\omega(x_0)/x_0=\exp(-\delta \omega)$ and so,  for all $x\in (0,x_0)$, $ \varphi_\omega(x)=\exp(-\delta \omega)x$. Consequently, $\varphi_\omega$ coincides with the flow of the linear equation $Y'(t)=-\delta Y(t)$. That is impossible since for all $x>0$,  $f(x)>-\delta x$, so the first assumption in the proof is false. We conclude that for all $x_0\in (0,X_c)$,  $\varphi_\omega'(x_0)> \varphi_\omega(x_0)/x_0$.

Now, consider $x_0\in [X_c,I_m)$ and suppose that $\varphi_\omega'(x_0)\leq  \varphi_\omega(x_0)/x_0$. By concavity of $\varphi_\omega$ on $(X_c,+\infty)$ (Lemma \ref{LemmeConvexitéPhi}), for all $x\geq x_0$, $\varphi_\omega'(x)\leq \varphi_\omega'(x_0)\leq  \varphi_\omega(x_0)/x_0$.
Consequently for any $y\in (x_0,+\infty),$ $$\varphi_\omega (y)=\varphi_\omega (x_0)+\int_{x_0}^y \varphi_\omega' (x) \mathrm{d}x \leq \varphi_\omega (x_0)+\frac{\varphi_\omega(x_0)}{x_0}(y-x_0)=y\frac{\varphi_\omega(x_0)}{x_0}.$$
In particular, $\varphi_\omega(I_m)\leq I_m \frac{\varphi_\omega(x_0)}{x_0}$. 
However, $\varphi_\omega(I_m)= I_m\beta$ (Lemma \ref{LemmeI}) and, since $x_0\notin I=[I_m,I_M]$ , $\beta> \varphi_\omega(x_0)/x_0$  (Lemma \ref{LemmeBeta}). There is a contradiction. Therefore, for all $x_0\in [X_c,I_m)$, $\varphi_\omega'(x_0)>  \varphi_\omega(x_0)/x_0$.

Similar arguments allow us to conclude that $\varphi_\omega'(x)<\varphi_\omega(x)/x$ for $x\in (I_M,+\infty)$.

Finally, if $x\in [I_m,I_M]$, from Lemmas \ref{LemmeBeta} and \ref{LemmeI}, $\varphi_\omega'(x)=\beta$ and $\varphi_\omega(x)=\beta x$. That achieves the proof. 
%
\end{proof}

Now, we can prove Theorem \ref{TheoremeSolPeriodiques}.
\begin{proof}[Proof of Theorem \ref{TheoremeSolPeriodiques}]
From the definition of $\Gamma_\omega$ in (\ref{DefGamma}), for $x>0$ one has $$\Gamma_\omega'(x)=\frac{\varphi_\omega(x)-x\varphi_\omega'(x)}{\varphi_\omega(x)^2}.$$
Then, from Lemma \ref{LemmeComparaison}, $\Gamma_\omega$ is decreasing on $(0,I_m)$, constant on $[I_m,I_M]$ with $\Gamma_\omega(x)=\alpha^*:=(1/\beta) -1$ (Lemma \ref{LemmeBeta}) and increasing on $(I_M,+\infty)$. Moreover from Lemma \ref{lemmeLimites},  $\lim_{x\to +\infty}\Gamma_\omega (x)=\lim_{x\to 0^+}\Gamma_\omega (x)=\exp (\delta\omega)-1$. In addition, since $X_u$ and $X_s$ are two steady states of (\ref{EquationFeedback}), $\Gamma_\omega(X_u)=\Gamma_\omega(X_s)=0$. So for the different values of $\alpha$ in Theorem \ref{TheoremeSolPeriodiques}, we can conclude on $\Gamma_\omega^{-1}(\{\alpha\})$ (see Figure \ref{FiguresGammaTheorique} (A)) and then on the number of periodic solutions of (\ref{ImpulsiveEquationFeedback}) (Proposition \ref{PropGamma}). 
The stability of the positive periodic solutions is a consequence of Lemma \ref{monotonieselongamma} and can be deduced from the graph of $\Gamma_\omega$ (Remark \ref{LireStabilité}).

Let's consider $\alpha^* \leq \alpha<\exp(\delta \omega)-1$, then $\Gamma_\omega^{-1}(\{\alpha\})=\{X_l,X_h\}$ with $X_l\in(0,I_m)$ and $X_h\in (I_M,+\infty)$. From the monotonicity of $\Gamma_\omega$ we conclude that $X(\ \cdot \ ; X_l,\alpha)$ is unstable and $X(\ \cdot \ ; X_h,\alpha)$ is asymptotically stable (Remark \ref{LireStabilité}).  It remains to study the stability of $X\equiv 0$. 
Let $X_0\in (0,X_l)$. Then, for all $t\geq 0$, $0<X(t;X_0,\alpha)<X(t;X_l,\alpha)$ and, as a periodic function, $X(\ \cdot \ ;X_l,\alpha)$ is bounded, so $X(\ \cdot \ ;X_0,\alpha)$ is bounded. According to Proposition \ref{borneeconverge}, $X(\ \cdot \ ;X_0,\alpha)$ converges to a periodic solution.
 Since $X(\ \cdot \ ;X_l,\alpha)$ is unstable, then $X(\ \cdot \ ;X_l,\alpha)$ converges necessarily to $0$, that is $X\equiv 0$ is asymptotically stable. 

Let $\alpha\geq\exp(\delta \omega)-1$, then $\Gamma_\omega^{-1}(\{\alpha\})=\{\emptyset\}$ and $X\equiv 0$ is the only periodic solution of (\ref{ImpulsiveEquationFeedback}). Moreover, let $X_0>0$, then $\Gamma_\omega(X_0)<\alpha$, that is $$\frac{X_0}{\varphi_\omega(X_0)}-1<\alpha \Leftrightarrow X(\omega;X_0,\alpha):=(1+\alpha)\varphi_\omega(X_0)>X_0. $$ 
Then, $(X(k\omega;X_0,\alpha))_{k\in \mathbb{N}^*}$ is an increasing sequence (Lemma \ref{monotonie}) and either converges to a periodic solution or is unbounded (Proposition \ref{borneeconverge}).  $X(\ \cdot \ ;X_0,\alpha)$ cannot converge to $0$ and therefore diverges to $+\infty$. 

Similarly, we show that if $\alpha < \alpha^* $, for any $X_0>0$, $(X(k\omega;X_0,\alpha))_{k\in \mathbb{N}^*}$ decreases. It follows that $0<X(\ \cdot \ ;X_0,\alpha)\leq  \max_{t\in [0,\omega)} X(t;X_0,\alpha)<+\infty$. 
 As a bounded solution of (\ref{ImpulsiveEquationFeedback}), $X(\ \cdot \ ;X_0,\alpha)$ converges to a periodic solution of (\ref{ImpulsiveEquationFeedback}) (Proposition \ref{borneeconverge}), and $X\equiv 0$ is the only one. 
  \end{proof}

\begin{figure}[t]
   \begin{minipage}[c]{.46\linewidth}
 \includegraphics[scale=0.4]{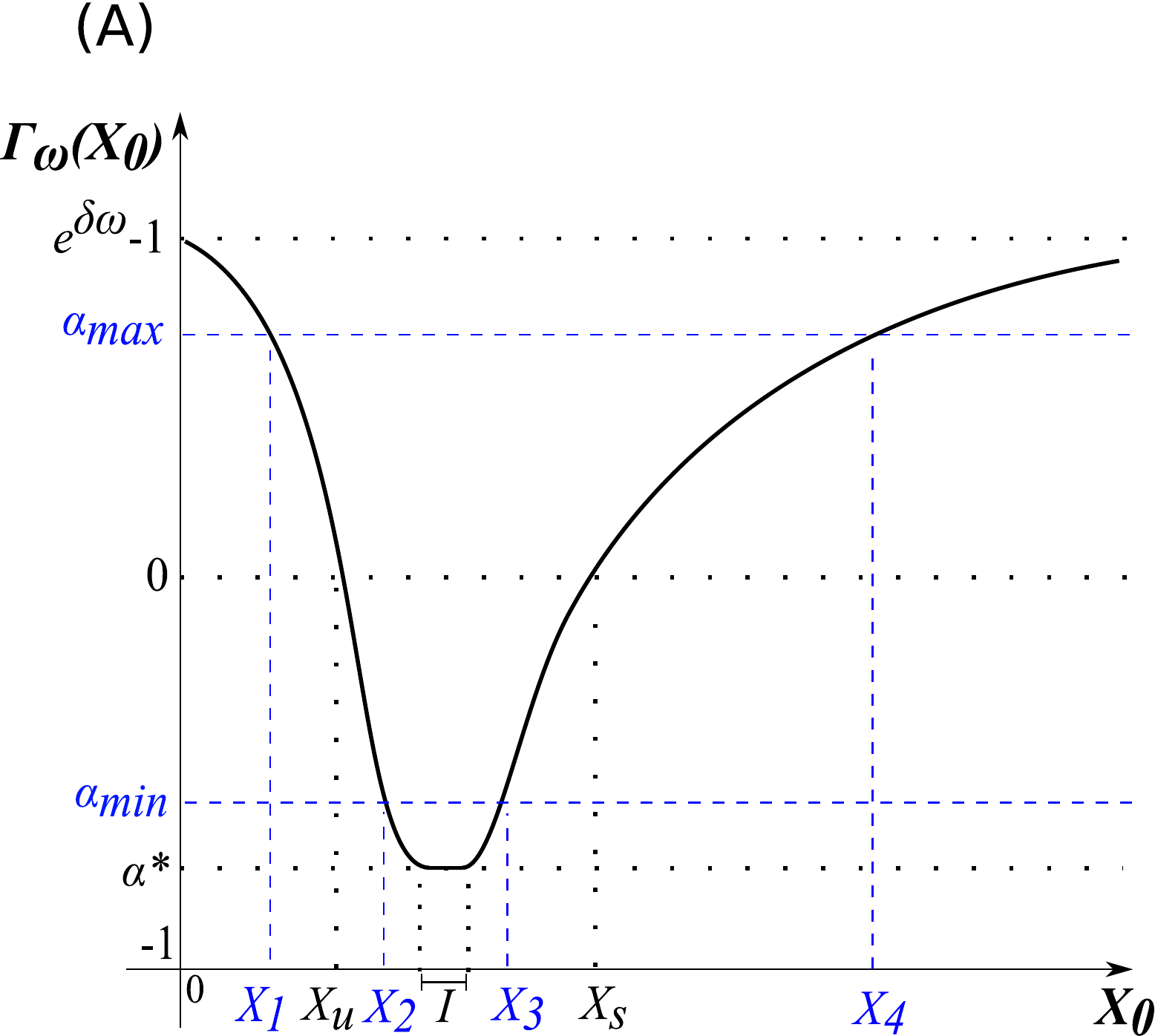}
   \end{minipage} \hfill
   \begin{minipage}[c]{.46\linewidth}
   \includegraphics[scale=0.4]{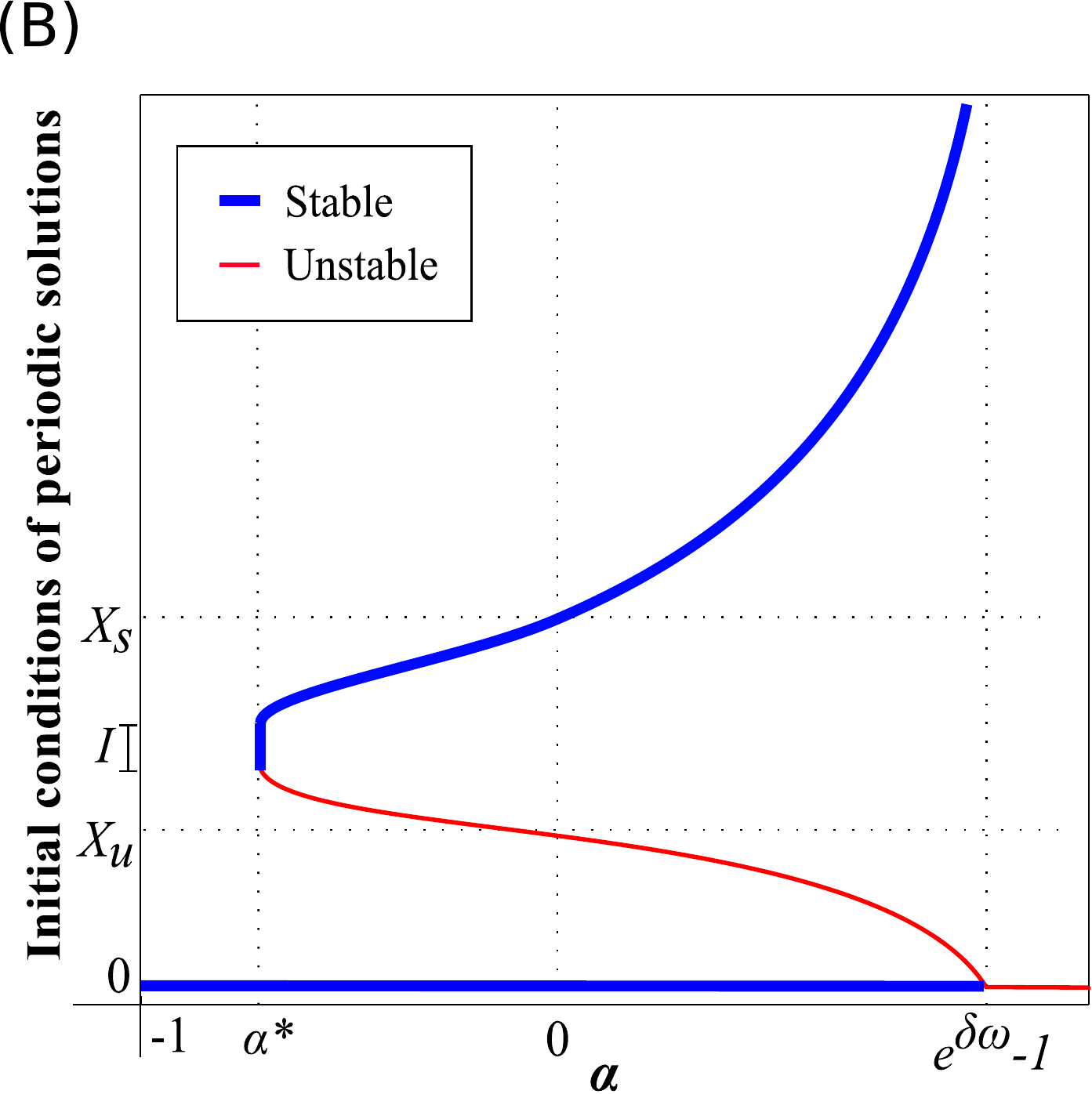}
   \end{minipage}\\
   \caption{According to the results shown in Section \ref{ChapitreImpulseTbet} and under hypotheses of Theorem \ref{TheoremeSolPeriodiques} : (A) Qualitative graph of function $\Gamma_\omega$. Blue dashed lines correspond to critical values mentioned in Proposition \ref{PropConclusion}. (B) Initial conditions of the periodic solutions of (\ref{ImpulsiveEquationFeedback}). For any $\alpha>-1$, the initial conditions $X_0$ such that $X(\ \cdot \ ;X_0,\alpha)$ is periodic and stable (resp. unstable) are drawn in bold blue (resp. thin red).}
   \label{FiguresGammaTheorique}
\end{figure}

\subsection{Numerical results}
\begin{figure}[t h]
\begin{center}
\includegraphics[scale=0.68,  angle =0 ]{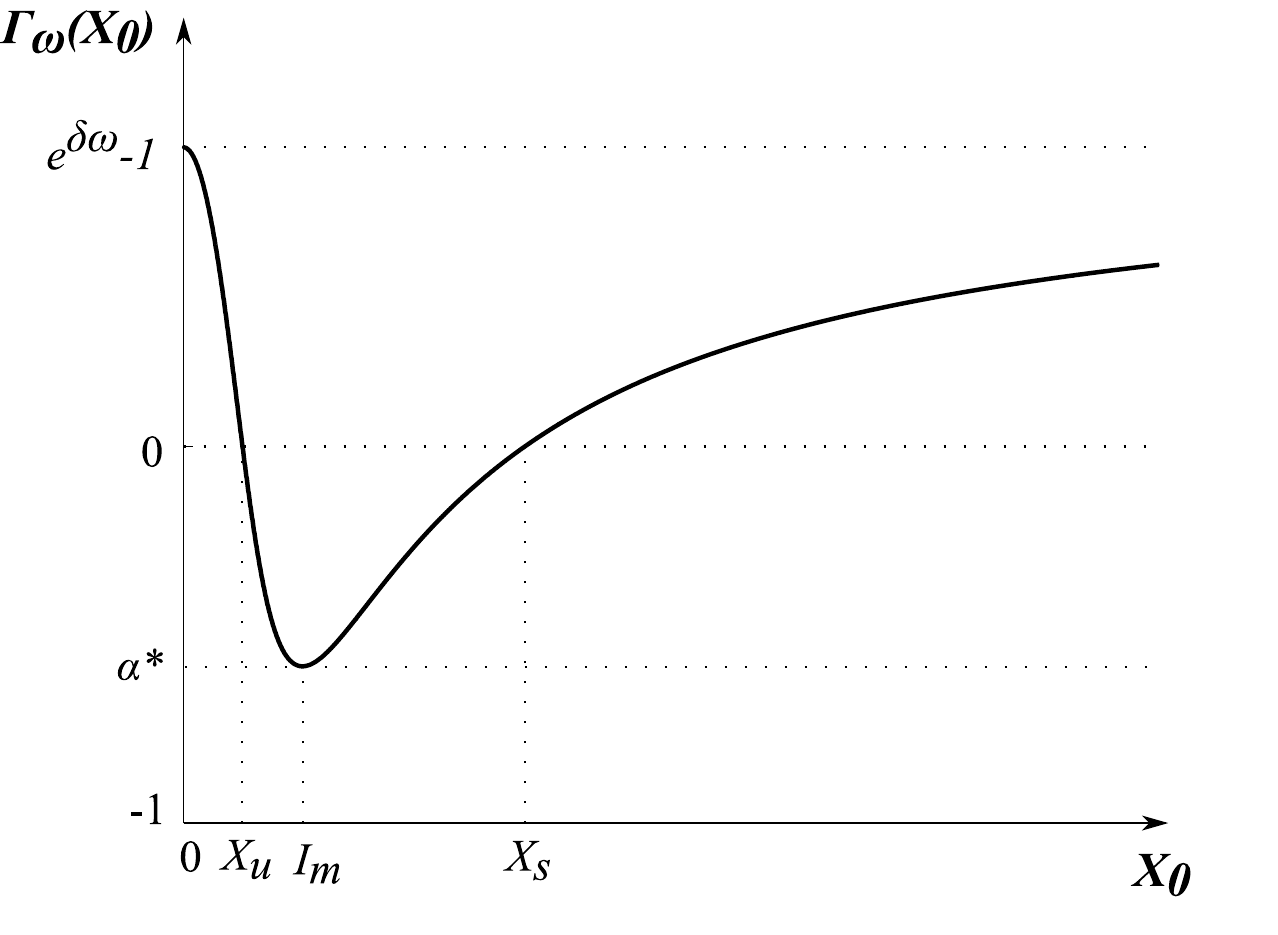}
\caption{Graph of function $\Gamma_\omega$ with parameters given by (\ref{param}), $X_u,\ X_s$ as defined in Proposition \ref{équilibreTbet}, $I_m$ from Lemma \ref{LemmeI} and $\alpha^*$ from Theorem \ref{TheoremeSolPeriodiques}.}
\label{graphGamma}
\end{center}
\end{figure}
Here, we numerically illustrate the results of Theorem~\ref{TheoremeSolPeriodiques} by setting $\omega=360\text{ min}$ and using the following set of parameters, satisfying  $\mathbf{(H_5)}$, \begin{equation} \begin{array}{l}
\eta=0.05 \text{ mol.L}^{-1}\text{ min}^{-1},\ \theta=35\text{ mol.L}^{-1}, \\ \delta=4.1 \times 10^{-4}  \text{ min}^{-1},\ n=3.
\end{array}
\label{param}\end{equation}
The graph of $\Gamma_\omega$, defined in (\ref{DefGamma}), with parameters (\ref{param}) is shown on Figure \ref{graphGamma}.
In particular, we observe that $I$, in Theorem \ref{TheoremeSolPeriodiques}, is reduced to $I=\{I_m\}$ with $I_m\approx 41.5\text{ mol.L}^{-1}$, $X_u\approx 20.5\text{ mol.L}^{-1}$, $X_s\approx 119\text{ mol.L}^{-1}$, $\alpha^*\approx -0.116$ and $\exp(\delta \omega)-1\approx 0.159$.
\begin{figure}[h! t]
\begin{center}
\includegraphics[scale=0.76,  angle =0 ]{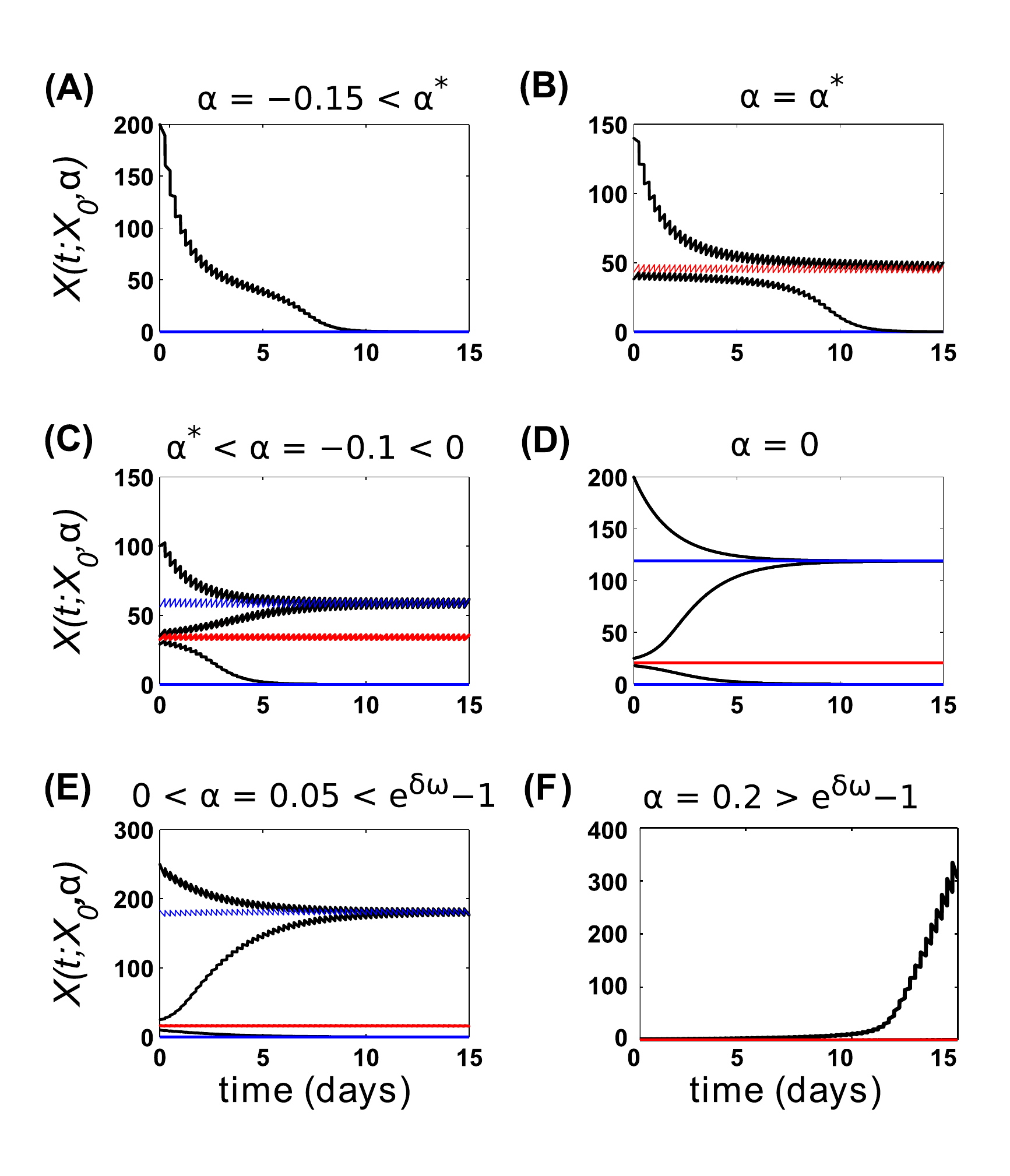}
\caption{Solutions of System (\ref{ImpulsiveEquationFeedback}) from different initial values and for different values of $\alpha$ corresponding to cases presented in Theorem \ref{TheoremeSolPeriodiques}. Stable (resp. unstable) periodic solutions are plotted in blue (resp. red).}
\label{FigSolPeriodiques}
\end{center}
\end{figure}

On Figure \ref{FigSolPeriodiques}, we illustrate  by numerical simulations the 6 qualitatively different cases mentioned in Theorem \ref{TheoremeSolPeriodiques}, that is: (A) $\alpha<\alpha^*$ and all positive solutions of (\ref{ImpulsiveEquationFeedback}) converge to the constant solution $X\equiv 0$; (B) $\alpha=\alpha^*$ and for any $X_0>I_m$ the solution $X(\ \cdot \ ;X_0,\alpha^*)$ converges to the periodic solution $X(\ \cdot \ ; I_m,\alpha^*)$ while for any $X_0<I_m$, the solution $X(\ \cdot \ ;X_0,\alpha^*)$ converges to the constant solution $X\equiv 0$; (C), (D), (E) are bistable cases: two stable periodic solutions are separated by an unstable one (in particular (D) corresponds to the case without impulse); (F) $\alpha>\exp(\delta\omega)-1$ and all positive solutions of (\ref{ImpulsiveEquationFeedback}) diverge to $+\infty$.

In this section, we established some qualitative properties of the function $\Gamma_\omega$, concluded on the existence and stability of periodic solutions of (\ref{ImpulsiveEquationFeedback}) under hypotheses $\mathbf{(H_3)}$, $\mathbf{(H_4)}$ and $\mathbf{(H_5)}$ and illustrated them with numerical simulations. In the following and last section, we replace $\mathbf{(H_4)}$ by a weaker hypothesis, allowing for stochastic partitioning of the molecular content at cell division, and draw conclusions on the biological problem of protein repartition at cell division.

\section{Application to cell fate decision}
\label{SectionBio}

In this section, we use results from Section \ref{ChapitreImpulseTbet} to draw conclusions on the behaviour of (\ref{ImpulsiveEquationFeedback}) when we no longer consider hypothesis $\mathbf{(H_4)}$ to be satisfied. Then, we propose an explanation, based on asymmetric partitioning of Tbet concentration at cell division, for the emergence of two qualitatively different pools of cells, with distinct fates, among a population generated by a single cell. 

As done in \citet{Prokopiou} and \citet{Gao2016}, we suppose that when a cell with concentration $X$ for protein Tbet undergoes its $k^{\text{th}}$ division, it gives birth to two daughters cells with concentrations $(1+\alpha_k)X$ and $(1-\alpha_k)X$ respectively. Biologically, it would not be so relevant to consider the  sequence $(\alpha_k)_{k\in \mathbb{N}^*}$ to be constant (hypothesis $\mathbf{(H_4)}$). Consequently, we introduce the weaker hypothesis $\mathbf{(H_6)}$,
\begin{description}
\item[($\mathbf{H_6}$)]  There exists $-1<\alpha_{min}<\alpha_{max}<+\infty$ such that the sequence $(\alpha_k)_{k\geq 1}$ verifies,  for all $k \in \mathbb{N}^*,\ \alpha_k \in [\alpha_{min},\alpha_{max}]\subset (-1,+\infty)$.
\end{description}

 According to Lemma \ref{encadrement}, if $\mathbf{(H_6)}$ holds true, then for any given initial condition $X_0$,  the solution $X(\ \cdot \ ;X_0,\alpha_k)$ of (\ref{ImpulsiveEquation}) is bounded by the two solutions $X(\ \cdot \ ;X_0,\alpha_{min})$ and $X(\ \cdot \ ;X_0,\alpha_{max})$ obtained with $m_k=\alpha_{min}$ and $M_k=\alpha_{max}$ respectively. Consequently, although ($\mathbf{H_4}$) is very restrictive on parameters $(\alpha_k)_{k\in\mathbb{N }^*}$, studying Systems (\ref{ImpulsiveEquation}) or (\ref{ImpulsiveEquationFeedback}) with ($\mathbf{H_4}$) provides bounds for the less restrictive case $\mathbf{(H_6)}$.

 The following proposition is then an immediate consequence of Lemma \ref{encadrement} and Theorem \ref{TheoremeSolPeriodiques}. Note that $X_1,\ X_2,\ X_3,\ X_4$ mentioned below are illustrated on Figure \ref{FiguresGammaTheorique}(A).

\begin{proposition}
Let hypotheses $\mathbf{(H_3)}$, $\mathbf{(H_5)}$ and $\mathbf{(H_6)}$ hold true. Let $\alpha^*$ from Theorem $\ref{TheoremeSolPeriodiques}$ and $\alpha_{min}$, $\alpha_{max}$ from $\mathbf{(H_6)}$ satisfy $\alpha^*<\alpha_{min}<0<\alpha_{max}<\exp (\delta \omega)-1$. Then there exists $X_1$, $X_2$, $X_3$, $X_4 \in \mathbb{R}^+$ such that 
 $\Gamma_\omega^{-1}(\{\alpha_{max}\})= \{X_1,X_4\},\ \Gamma_\omega^{-1}(\{\alpha_{min}\})=\{X_2,X_3\}$ and $$ 0<X_1<X_u<X_2<X_3<X_s<X_4. $$ 
 For the sake of simplicity, the periodic functions $X(\ \cdot \ ; X_1, \alpha_{max})$, $X(\ \cdot \ ; X_2, \alpha_{min})$,  $X(\ \cdot \ ; X_3, \alpha_{min})$ and $X(\ \cdot \ ; X_4, \alpha_{max})$ are denoted hereunder by $X_1^p,\ X_2^p,\ X_3^p$ and $X_4^p$  respectively. Then, for any sequence $(\alpha_k)_{k\in \mathbb{N}}$ satisfying $\mathbf{(H_6)}$, \begin{description}
 \item[i)] if $0\leq X_0<X_1$, $X(\ \cdot \ ; X_0, \alpha_k)$ converges to zero. 
 \item[ii)] if $X_1<X_0\leq X_2$,  for all $ t\geq 0,\ X(t ; X_0, \alpha_k)\in [0,X_4^p(t)]\subset [0,X_4]$. 
 \item[iii)] if $X_2<X_0<X_3$,   for all $ t\geq 0,\ X(t ; X_0, \alpha_k)\in (X_2^p(t),X_4^p(t))\subset (X_2,X_4)$. Moreover,  for any $\epsilon >0$ there exists $t^*>0$ such that, for all $ t>t^*$, $X(t ; X_0, \alpha_k) \in (X_3^p(t)-\epsilon,X_4^p(t)) \subset (X_3-\epsilon, X_4)$.
 \item[iv)]  if $X_3<X_0\leq X_4$,  for all $ t\geq 0,\ X(t; X_0, \alpha_k) \in (X_3^p(t),X_4^p(t))\subset (X_3, X_4)$. 
 \item[v)] if $X_4<X_0$,  for all $ t\geq 0,\ X(t ; X_0, \alpha_k)\in (X_3^p(t),X_0)\subset (X_3,X_0)$, moreover   for all $ \epsilon >0$ there exists $t^*>0$ such that  for all $ t>t^*,\ X(t ; X_0, \alpha_k)\in (X_3^p(t),X_4^p(t)+\epsilon) \subset (X_3, X_4+\epsilon)$.
 \end{description} \label{PropConclusion}
\end{proposition}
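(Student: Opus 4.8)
The plan is to reduce everything to the constant-rate analysis of Theorem~\ref{TheoremeSolPeriodiques} by squeezing the variable-rate solution between the two extreme constant-rate solutions. First I would establish the existence and ordering of $X_1,X_2,X_3,X_4$ from the monotonicity and limit properties of $\Gamma_\omega$ recorded in the proof of Theorem~\ref{TheoremeSolPeriodiques}: $\Gamma_\omega$ is strictly decreasing on $(0,I_m)$, constant equal to $\alpha^*$ on $[I_m,I_M]$, strictly increasing on $(I_M,+\infty)$, with $\lim_{0^+}\Gamma_\omega=\lim_{+\infty}\Gamma_\omega=\exp(\delta\omega)-1$ and $\Gamma_\omega(X_u)=\Gamma_\omega(X_s)=0$. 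Since $\alpha^*<\alpha_{min}<0<\alpha_{max}<\exp(\delta\omega)-1$, the intermediate value theorem on each monotone branch yields exactly two preimages per level: $\Gamma_\omega^{-1}(\{\alpha_{max}\})=\{X_1,X_4\}$ with $X_1\in(0,X_u)$ and $X_4\in(X_s,+\infty)$ (because $\alpha_{max}>0=\Gamma_\omega(X_u)=\Gamma_\omega(X_s)$), and $\Gamma_\omega^{-1}(\{\alpha_{min}\})=\{X_2,X_3\}$ with $X_2\in(X_u,I_m)$ and $X_3\in(I_M,X_s)$ (because $\alpha_{min}<0$); the chain $0<X_1<X_u<X_2<X_3<X_s<X_4$ then follows from $I_m\le I_M$.

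The engine of the proof is Lemma~\ref{encadrement} applied with $m_k\equiv\alpha_{min}$ and $M_k\equiv\alpha_{max}$, which gives, for every sequence satisfying $\mathbf{(H_6)}$ and every $X_0$ and $t$,
\[
X(t;X_0,\alpha_{min})\le X(t;X_0,\alpha_k)\le X(t;X_0,\alpha_{max}).
\]
The two bounding solutions have constant rate, so their behaviour is entirely described by Theorem~\ref{TheoremeSolPeriodiques}: $X(\cdot;\cdot,\alpha_{max})$ has separatrix $X_1^p$ and stable solutions $0$ and $X_4^p$, while $X(\cdot;\cdot,\alpha_{min})$ has separatrix $X_2^p$ and stable solutions $0$ and $X_3^p$. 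I would also repeatedly invoke the non-crossing property (the Remark following Lemma~\ref{encadrement}): for a fixed constant rate, $Y<Z$ at $t=0$ propagates for all $t$. Comparing $X(\cdot;X_0,\alpha_{min})$ to $X_2^p$ or $X_3^p$ produces the lower envelopes, and comparing $X(\cdot;X_0,\alpha_{max})$ to $X_4^p$ produces the upper envelopes.

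To upgrade these envelopes into the stated containments in intervals with constant endpoints, I would record the monotone shape of the periodic solutions between impulses. Since $f<0$ above $X_s$, the solution $X_4^p$ (starting at $X_4>X_s$) decreases on each $[k\omega,(k+1)\omega)$ and jumps up at impulses, so $\max X_4^p=X_4$ and $X_4^p(t)\le X_4$; since $f>0$ on $(X_u,X_s)$ and $1+\alpha_{min}<1$, the solutions $X_2^p,X_3^p$ (starting in $(X_u,X_s)$) increase between impulses and jump down, so $\min X_j^p=X_j$ and $X_j^p(t)\ge X_j$. Combining these monotone bounds with the squeeze yields i)--iv): in i) the upper bound $X(\cdot;X_0,\alpha_{max})\to0$ forces convergence to $0$; in ii) only the upper bound $X(\cdot;X_0,\alpha_{max})\le X_4^p(t)$ is needed (the lower one is trivially $\ge0$); in iii)--iv) one reads off $X_2^p(t)<\,\cdot\,<X_4^p(t)$ or $X_3^p(t)<\,\cdot\,<X_4^p(t)$. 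The asymptotic (``moreover'') refinements follow because the constant-rate bounding solutions converge to $X_3^p$ (from $\alpha_{min}$, when $X_0>X_2$) and to $X_4^p$ (from $\alpha_{max}$, when $X_0>X_4$), so their distance to the limiting periodic solution is eventually below $\epsilon$.

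The step I expect to be the main obstacle is the upper bound $X(t;X_0,\alpha_k)<X_0$ in case v), where no periodic solution lies above the trajectory and non-crossing alone does not suffice. Here I would argue directly on $W:=X(\cdot;X_0,\alpha_{max})$: because $X_0>X_4$ and $\Gamma_\omega$ is increasing on $(I_M,+\infty)$, one has $\Gamma_\omega(X_0)>\alpha_{max}$, equivalently $(1+\alpha_{max})\varphi_\omega(X_0)<X_0$, so $W(\omega)<W(0)$ and hence, by Lemma~\ref{monotonie}, the sample sequence $(W(k\omega))_k$ is strictly decreasing; moreover $W(k\omega)>X_4>X_s$ for all $k$ by non-crossing with $X_4^p$. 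Since $W$ begins each interval above $X_s$, where $f<0$, it decreases on every $[k\omega,(k+1)\omega)$, so $\sup_{t>0}W(t)=W(0)=X_0$ is attained only at $t=0$ and $W(t)<X_0$ for $t>0$; the squeeze then gives the claim, and the asymptotic part uses $W(t)\to X_4^p(t)$ as above. The endpoint values at $t=0$, and the degenerate equality occurring when $\alpha_k\equiv\alpha_{max}$ and $X_0=X_4$ in iv), are the only places where an inclusion is non-strict, and they require no further argument.
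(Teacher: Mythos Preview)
Your proposal is correct and follows exactly the route the paper indicates: the paper declares the proposition ``an immediate consequence of Lemma~\ref{encadrement} and Theorem~\ref{TheoremeSolPeriodiques}'' without further argument, and what you have written is a careful and accurate unpacking of that claim---the sandwiching via Lemma~\ref{encadrement} with $m_k\equiv\alpha_{min}$, $M_k\equiv\alpha_{max}$, the location of $X_1,\dots,X_4$ from the shape of $\Gamma_\omega$, the monotone structure of the bounding periodic solutions between impulses, and the direct treatment of case~v) are all sound.
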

\begin{remark} \label{remarkq}
For straightforward biological reasons, $\alpha_k$ must be chosen in $(-1,1)$ 
  so the protein concentration in the daughter cell is positive and at most twice as large as that observed in the mother cell. From now on we consider that for all $k\geq 1$, $\alpha_k \in [q-1,1-q]$ where $q\in (0,1)$. For example, $\alpha_k$ can be randomly chosen from the uniform law $\mathcal{U}_{[q-1,1-q]}$ \citep{Prokopiou,Gao2016}.
\end{remark}

Hereinbelow, the concentration of protein Tbet in a cell is associated to its level of differentiation. High concentration of Tbet ($X\approx X_s$) corresponds to an effector phenotype while low concentration ($X\approx 0$) corresponds to a memory phenotype \citep{Joshi2007,Kaech2012,Lazarevic2013}. In the following, we say that the fate of a cell is irreversible if Tbet concentration in that cell, modelled by System (\ref{ImpulsiveEquationFeedback}), is definitively higher or definitively lower than $X_u$. Using the results from Theorem \ref{TheoremeSolPeriodiques} and Proposition \ref{PropConclusion}, we discuss how the value of $q$, that is, the degree of asymmetry in the process of protein distribution (see Remark \ref{remarkq}), impacts the cell population generated by a single cell and cell fate reversibility. Note that the smaller $q$, the more asymmetric the distribution.

Let $\mathbf{(H_3)}$, $\mathbf{(H_5)}$ and $\mathbf{(H_6)}$  hold true with $[\alpha_{min},\alpha_{max}]= [q-1,1-q]$, $q\in (0,1)$. According to Theorem \ref{TheoremeSolPeriodiques}, the solutions of (\ref{ImpulsiveEquationFeedback}) are bounded if and only if $\alpha_{max}<\exp (\delta \omega)-1$, that is $q>2-\exp (\delta \omega)$. Indeed, if it is not the case, there exists $X_0>0$ and a sequence $(\alpha_k)_{k\in \mathbb{N}}$  satisfying $\mathbf{(H_6)}$  such that $X(\ \cdot \ ; X_0, \alpha_k)$ tends to $+\infty$ (for example, $\alpha_k\equiv \alpha_{max}$). Then, it is reasonable to consider  $q>2-\exp (\delta \omega)$. Similarly, if $\alpha_{min}<\alpha^*$ (that is $q>1+\alpha^*$), for any $X_0>0$ there exists a sequence $(\alpha_k)_{k\in \mathbb{N}}$  satisfying $\mathbf{(H_6)}$  such that $X(\ \cdot \ ; X_0, \alpha_k)$ tends to $0$ (for example  $\alpha_k\equiv \alpha_{min}$). 
 
 
On the contrary, if we assume $1>q>\max (1+\alpha^*,2-\exp (\delta \omega))$, then $[q-1, 1-q]\subset (\alpha^*,\exp (\delta \omega)-1)$ and Proposition \ref{PropConclusion} holds.  In that case, if there exists $K\in \mathbb{N}$ such that $X(K\omega)<X_1$ (resp. $X(K\omega)>X_2$), then for any $(\alpha_k)_{k\geq K}$  satisfying $\mathbf{(H_6)}$, the concentration  $X(\ \cdot \ ; X_0, \alpha_k)$ of Tbet in the cell's progeny converges to zero (resp. for any $\epsilon>0$, there exists $K^*>K$ such that for all $k>K^*$, $X(k\omega; X_0, \alpha_k)\in [X_3-\epsilon,X_4+\epsilon])$.  Consequently the whole cell's progeny will develop a memory (resp. effector) profile, characterised by a low (resp. high) concentration of Tbet.  If $X_1 < X(K\omega) < X2$, cell fate depends on the values of $(\alpha_k)_{k\geq K}$.
 
 In conclusion, if the asymmetry in the repartition of proteins at division is low enough ($1>q>\max (1+\alpha^*,2-\exp (\delta \omega))$), there exist critical points in the process of differentiation toward memory or effector cell beyond which the differentiation is irreversible for the considered cell and its progeny.   Finally, at any time $t=k\omega,\ k\in \mathbb{N}$, the asymptotic state (high or low Tbet level) of a cell's lineage remains undetermined if and only if $X_1<X(t)<X_2$.



\begin{figure}[h!]
\centering
\includegraphics[scale=0.451,  angle =0 ]{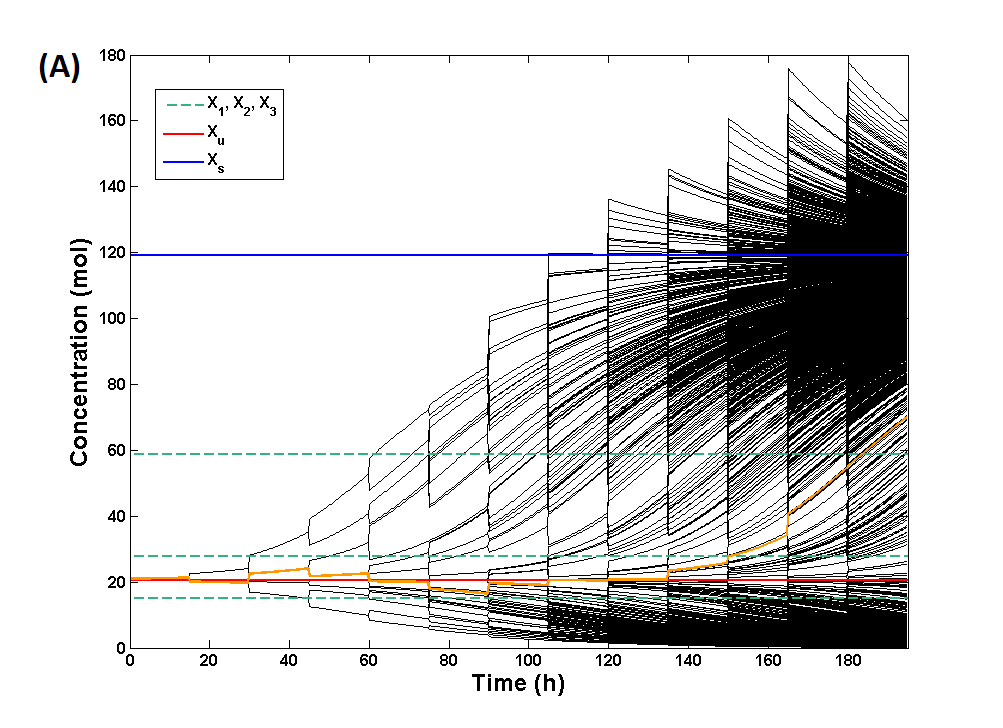}
\includegraphics[scale=0.451,  angle =0 ]{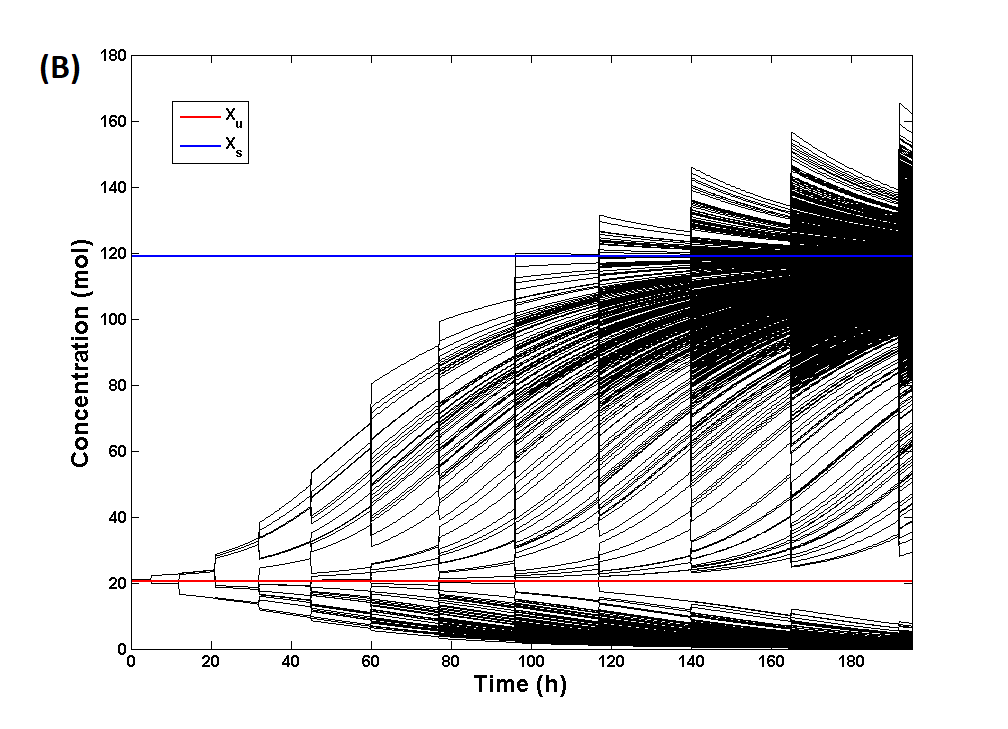}
\caption{Concentration of Tbet in a cell population generated by an initial cell with $X_0=21$ mol. (A) Constant cell cycle length $\omega = 900 \text{ min}$. (B) Increasing cell cycle length $\omega_k=300+120(k)$ min, $k\in \mathbb{N}$. Dashed green lines in (A), from bottom to top: $X_1\approx 12\text{ mol.L}^{-1}$, $X_2\approx 32\text{ mol.L}^{-1}$, $X_3\approx 56\text{ mol.L}^{-1}$ ($X_4\approx 361\text{ mol.L}^{-1}$, not shown). Orange trajectory in (A) highlights a case where Tbet concentration crosses $X_u$ several times and finally leaves the interval $[X_1,X_2]$ such that, from there, cell fate (effector) is irreversible. Red thick straight line: $X_u\approx 20\text{ mol.L}^{-1}$. Blue straight line: $X_s\approx 119\text{ mol.L}^{-1}$. }
\label{FigureSolutions}
\end{figure}

  Those results are illustrated on Figure \ref{FigureSolutions}.A. Using parameter values from (\ref{param}), $\omega=900\text{ min}$ and $q=0.8$, such that $1>q>\max (1+\alpha^*,2-\exp (\delta \omega))\approx 0.74$.
Figure \ref{FigureSolutions}.A represents the evolution of the concentration of Tbet in the whole progeny generated by an initial cell with concentration $X_0=21\in (X_1,X_2)$. At each cell division, a coefficient $\alpha_k$ is drawn from the uniform law on $(q-1,1-q)$ and sets the degree of asymmetry of the division. After a few divisions, one observes the emergence of a pool of cells with a low (lower than $X_1$) concentration for Tbet, associated to irreversible differentiation in memory cells, and a pool of cells with a high (higher than $X_2$) concentration for Tbet, associated to irreversible differentiation in effector cells. However, at time $t=12\omega=180h$, $34$ cells are still characterised by a Tbet concentration between $X_1$ and $X_2$: therefore their fates are undetermined  (see for instance the cell with the orange trajectory on Figure \ref{FigureSolutions}.A).

In practice, the cell cycle length is very short following activation but increases in the following days, when the cells have undergone some divisions \citep{Yoon2010}. It is then meaningful to consider that the cell cycle length increases after each division. 

In that case, it is easy to verify that the conclusions of Proposition~\ref{PropConclusion} remain true. In particular, for fixed values of $\alpha_{min}$ and $\alpha_{max}$, $X_1$ and $X_2$ from Proposition~\ref{PropConclusion} respectively increases and decreases when the cycle length $\omega$ increases (and converge to $X_u$ if $\omega$ tends to infinity). Consequently, while the cycle length increases,  the interval of Tbet concentration $[X_1,X_2]$ for which the cell fate remains undetermined shrinks. This suggests that increasing cell cycle length not only slows down the expansion of a cell population but also precipitates cell fate decision. 

This is illustrated on Figure \ref{FigureSolutions}.B. Parameter values are identical to those from Figure \ref{FigureSolutions}.A. except that the cell cycle length starts from $300\text{ min}$ and increases by two hours at each division, then cell cycle lengths are given by $\omega_k=300+120k\text{ min}$  for $k\in \mathbb{N}$. In particular at time $t=45h$ cells enter in a cycle of $\omega_5=900\text{ min}$ as in Figure \ref{FigureSolutions}.A. 
As cell cycle length increases, the interval of Tbet concentration in which cell fate remains undetermined shrinks (not shown) so that more and more cells adopt a definitive fate.  At time $t=160\text{ h}$, all the cell fates are irreversible.

\section*{Discussion}

In this paper, we studied the effect of unequal molecular partitioning at cell division on the emergence of phenotypic heterogeneity in a population of CD8 T-cells. To do so,
we introduced the impulsive system (\ref{ImpulsiveEquation}), characterised by a specific form of impulses but a general form of the reaction term. We proved results on the existence and stability of periodic solutions which represent attractors of the solutions and consequently are biologically relevant. Most of those results rely on the properties of the flow of an autonomous differential equation. Nevertheless, most of the time, an explicit expression of this flow cannot be determined. We then focused on the properties of the flow and gave in Theorem \ref{TheoremConvexite} sufficient conditions for the flow to be convex. Those results were applied in Sections \ref{ChapitreImpulseTbet} and \ref{SectionBio} to the case of protein Tbet regulation (described by an autonomous differential equation) and partitioning (described as impulse) in a CD8 T-cell lineage. We investigated how the degree of asymmetry in the molecular partitioning process can affect the differentiation of a CD8 T-cell toward effector or memory phenotype in Theorem \ref{TheoremeSolPeriodiques} and Proposition \ref{PropConclusion}. Associating high concentration of Tbet with effector phenotype and low concentration of Tbet with memory phenotype \citep{Joshi2007,Kaech2012,Lazarevic2013}, we showed  that if the degree of asymmetry is small enough, either the cell concentration of Tbet belongs to a non-trivial interval and the cell can still generate both effector and memory cell, or the cell differentiation is irreversible. 

This model is of course too simple to provide biologically realistic quantitative predictions, partly because the process of CD8 T-cell differentiation is too complex to be reduced to a Tbet-mediated differentiation process and partly because stochastic partitioning is not the only source of heterogeneity. 
In this regard, \citet{Huh2010} emphasised that different sources of heterogeneity (e.g. stochastic partitioning and gene expression noise) have redundant effects and therefore, evaluating the contribution of each source is a challenging task. It could then be instructive to consider an impulsive system in the form of (\ref{ImpulsiveEquationFeedback}) with a stochastic right-hand side function, accounting for gene expression noise. Regarding the simplicity of our model, it is however noticeable that it allows to give insight into a complex biological process by providing theoretical background and original answers to a paramount biological question, and consequently contributes to fill the gap between experimental biology and mathematics.

Regarding the convexity of the flow of an autonomous differential equation, we gave necessary and sufficient conditions to conclude on the strict convexity of the flow when the reaction term of the differential equation is a piecewise linear function (Proposition \ref{PropConvexitéPhin}). However, if the reaction term is continuously differentiable, we only concluded on the convexity (not necessarily strict) of the flow in Theorem \ref{TheoremConvexite}. Based on these results and numerical simulations, one can hypothesise that strict convexity can actually be obtained under Theorem \ref{TheoremConvexite}'s hypotheses. Note that, in that case, Lemma \ref{LemmeConvexitéPhi} would lead to the strict convexity (resp. concavity)  of the flow on $(0,X_c)$ (resp. $(X_c,+\infty)$) and, as an immediate consequence of Definition \ref{defXcBetaI}, we could conclude that $I$ is reduced to a single point, as observed in Figure \ref{graphGamma}. 

\begin{acknowledgements}
 This work was performed within the framework of the LABEX MILYON (ANR-10-LABX-0070) of Universit\'e  de Lyon, within the program "Investissements d'Avenir" (ANR-11-IDEX-0007) operated by the French National Research Agency (ANR). 
\end{acknowledgements}

\end{document}